\newcommand{\lleft}{\left}
\newcommand{\rrvert}{\vert}
\newcommand{\rright}{\right}
\newcommand{\rrVert}{\Vert}
\newcommand{\llvert}{\vert}
\newcommand{\llVert}{\Vert}
\renewcommand{\mid}{|}
\newcommand{\ph}{\varphi}
\newcommand{\del}{\partial}
\newcommand{\Z}{\mathbb Z}
\newcommand{\R}{\mathbb R}
\newcommand{\C}{\mathbb C}
\newcommand{\F}{\mathbb F}
\newcommand{\HH}{\mathbb H}
\newcommand{\pp}{\mathbf p}
\newcommand{\ww}{\mathbf w}
\newcommand{\Cc}{\mathcal C}
\newcommand{\Hc}{\mathcal H}
\newcommand{\Pc}{\mathcal P}
\newcommand{\diag}{\operatorname{diag}}
\newcommand{\Span}{\operatorname{span}}
\renewcommand{\P}{\operatorname{\mathbf{P}}}
\newcommand{\1}{\mathbf{1}}
\newcommand{\E}{\mathbf E}
\renewcommand{\Re}{\operatorname{Re}}
\newcommand{\wt}{\widetilde}
\newcommand{\ol}{\overline}
\newcommand{\Ai}{\operatorname{Ai}}
\newtheorem{theorem}{Theorem}[section]
\newtheorem{lemma}[theorem]{Lemma}
\newtheorem{proposition}[theorem]{Proposition}
\newtheorem{fact}[theorem]{Fact}
\begin{document}
\begin{frontmatter}

\title{Limits of spiked random matrices II}
\runtitle{Spiked random matrices}

\begin{aug}
\author[A]{\fnms{Alex}~\snm{Bloemendal}\corref{}\ead[label=e1]{alexb@math.harvard.edu}\thanksref{T1}}
\and
\author[B]{\fnms{B\'alint}~\snm{Vir\'ag}\ead[label=e2]{balint@math.toronto.edu}\thanksref{T2}}
\runauthor{A. Bloemendal and B. Vir\'ag}
\affiliation{Harvard University and University of Toronto}
\address[A]{Department of Mathematics\\
Harvard University\\
Cambridge, Massachusetts 02138\\
USA\\
\printead{e1}}
\address[B]{Departments of Mathematics and Statistics\\
University of Toronto\\
Toronto, Ontario M5S 2E4\\
Canada\\
\printead{e2}}
\end{aug}
\thankstext{T1}{Supported by an NSERC postgraduate scholarship held at the University of Toronto.}
\thankstext{T2}{Supported  by the Canada Research Chair program and the NSERC DAS program.}

%
\received{\smonth{6} \syear{2012}}
%
\revised{\smonth{5} \syear{2015}}

%
\begin{abstract}
The top eigenvalues of rank $r$ spiked real Wishart matrices and
additively perturbed Gaussian orthogonal ensembles are known to exhibit
a phase transition in the large size limit. We show that they have
limiting distributions for near-critical perturbations, fully resolving
the conjecture of Baik, Ben Arous and P\'ech\'e
[\textit{Duke Math. J.} (2006) \textbf{133} 205--235]. The starting
point is a new $(2r+1)$-diagonal form that is algebraically natural to
the problem; for both models it converges to a certain random Schr\"
odinger operator on the half-line with $r\times r$ matrix-valued
potential. The perturbation determines the boundary condition and the
low-lying eigenvalues describe the limit, jointly as the perturbation
varies in a fixed subspace. We treat the real, complex and quaternion
($\beta= 1,2,4$) cases simultaneously. We further characterize the
limit laws in terms of a diffusion related to Dyson's Brownian motion,
or alternatively a linear parabolic PDE; here $\beta$ appears simply
as a parameter. At $\beta= 2$, the PDE appears to reconcile with known
Painlev\'e formulas for these $r$-parameter deformations of the GUE
Tracy--Widom law.
\end{abstract}

%
\begin{keyword}[class=AMS]
\kwd{60B20}
\kwd{60B12}
\end{keyword}
\begin{keyword}
\kwd{Random matrix theory}
\kwd{finite rank perturbations}
\kwd{spiked model}
\kwd{Tracy--Widom distributions}
\kwd{BBP phase transition}
\kwd{stochastic Airy operator}
\end{keyword}
\end{frontmatter}

\section{Introduction}\label{sec1}

Johnstone (\citeyear{J1}) proposed the spiked population model for simple trends
in high dimensional data. One takes a data matrix $X$ whose columns are
i.i.d. vectors with (population) covariance a fixed rank perturbation
of the identity, and studies the behaviour of the largest eigenvalues
of the sample covariance matrix $XX^*$ when both the dimension and the
size of the sample are large. \citet{BBP} (hereafter \textit{BBP}) discovered a very interesting phase transition phenomenon in the
complex Gaussian setting. Small spikes do not affect the asymptotic
behaviour of the top eigenvalues, which display the usual Tracy--Widom
fluctuations around the upper edge of the Marchenko--Pastur law; large
spikes, however, lead to outliers with Gaussian fluctuations. New
structure emerges near the transition point with near-critical spikes
deforming the soft edge limit. Understanding this transition regime in
the real case remained open for some time. There is a parallel
development for fixed rank additive perturbations of Wigner matrices.


In \citet{BV1} (hereafter \textbf{Part}~I), we considered rank
one spiked real/complex/quaternion Wishart matrices and additive rank
one perturbations of the Gaussian orthogonal, unitary and symplectic
ensembles. Our approach is based on the continuum operator limit at the
general beta soft edge developed in \citet{RRV} (hereafter
\textit{RRV}).
We introduced general $\beta$ analogues of the rank one spiked models,
modifying the tridiagonal ensembles of \citet{DE} and extended
the RRV technology to describe the soft-edge scaling limit in terms of
the stochastic Airy operator
\[
-\frac{d^2}{dx^2}+\frac{2}{\sqrt\beta}b_x'+x
\]
on $L^2(\R_+)$ with a boundary condition depending on the spike. The
boundary condition changes from Dirichlet $f(0)=0$ to Neumann/Robin
$f'(0)=wf(0)$ at the onset of the BBP phase transition, with $w\in\R$
representing a scaling parameter for perturbations in a ``critical
window''. The resulting largest eigenvalue laws form a one-parameter
family of deformations of Tracy--Widom($\beta$), naturally
generalizing the characterization of
{RRV} in terms of the ground state of this random Schr\"
odinger operator.

We went on to characterize the limit laws in terms of the diffusion
from {RRV} and in terms of an associated second-order
linear parabolic PDE. We further showed that at $\beta= 2,4$ the PDE
is related to known Painlev\'e II representations originating in
\citet{BR1} and gave new proofs of these, finally recovering
those of the undeformed Tracy--Widom laws.

Even the existence of limiting distributions in the critical regime was
in general new for $\beta\neq2$, though see the prior work of
\citet{W} on the rank one \mbox{$\beta=4$} case at $w=0$, as well as
the subsequent work of \citet{Mo} offering a more standard
treatment of the rank one $\beta=1$ case. \citet{F3} comments on
all three works and gives an alternative interpretation and
construction of our general $\beta$ rank one spiked model.

Here, we deal with $r$ ``spikes'', or general bounded-rank
perturbations of Gaussian and Wishart matrices. To do so, we introduce
a new ``canonical form for perturbations in a fixed subspace'', a
$(2r+1)$-diagonal band form that has a purely algebraic interpretation.
It generalizes the Dumitriu--Edelman forms and is able to handle rank
$r$ perturbations. We then develop a generalization of the methods of
{RRV} and {Part}~I  to a matrix-valued setting:
block tridiagonal matrices converge to a half-line Schr\"odinger
operator with matrix-valued potential, the spikes once again appearing
in the boundary condition. We treat the real, complex and quaternion
($\beta= 1,2,4$) cases simultaneously. Once again, even the existence
of a near-critical soft-edge limit is new off $\beta= 2$. Unlike in
Part~I, however, we do not define a general $\beta$ version
of either matrix model, nor of the limiting operator; in Section~\ref
{s.canonical}, we will see that the higher rank versions of these
objects do not readily admit a $\beta$-generalization.

Dyson's Brownian motion makes a surprise appearance, providing nice SDE
and PDE characterizations of the limit laws---new $r$ parameter
deformations of Tracy--Widom($\beta$)---in which $\beta$ reappears as
a simple parameter. The derivation makes use of the matrix-valued
version of classical Sturm oscillation theory and the Riccati
transformation. In a short final section, we report on preliminary
evidence that at $\beta=2$ the PDE can be connected with a Painlev\'e
II representation of \citet{B} for these distributions (which
appeared originally in {BBP} in the form of Fredholm determinants).

We highlight two more features of our approach beyond the novelty of
bypassing formulas for joint eigenvalue densities and handling $\beta
=1,2,4$ together. First, we treat the perturbation as a \emph
{parameter}. By this, we mean that all perturbations in a fixed
subspace are considered jointly (on the same probability space); this
picture is carried through to the limit, which is therefore a family of
point processes parameterized by an $r\times r$ matrix. Second, we
allow more general scalings than those considered in {BBP}.
Most importantly, in the Wishart case we do not require the two
dimensional parameters $n,p$ to have a positive limiting ratio but
rather allow them to tend to infinity together arbitrarily.

To state our results, we introduce some objects and notation that will
be used throughout the paper.

Let $\F= \R$, $\C$, or $\HH$ and $\beta= 1,2$ or $4$,
respectively. A \textit{standard $\F$ Gaussian} $Z\sim\F N(0,1)$ is
an $\F$-valued random variable described in terms of independent real
Gaussians $g_1,\ldots,g_\beta\sim N(0,1)$ as $g_1$ for $\F=\R$,
$(g_1+g_2i)/\sqrt{2}$ for $\F=\C$, and $(g_1+g_2i+g_3j+g_4k)/2$ for
$\F=\HH$. Note that in each case $\E\llvert Z\rrvert^2=1$
and $uZ\sim\F N(0,1)$ for $u\in\F$ with $\llvert u\rrvert
^2 = u^*u =1$.

The space of column vectors $\F^n$ is endowed with the standard inner
product $u^\dag v$ and associated norm $\llvert u\rrvert^2 =
u^\dag u$ (we reserve double bars for function spaces). Write $\F
N_n(0,I)$ for a vector of independent standard $\F$ Gaussians. With
$\Sigma\in M_n(\F)$ positive definite, we write $Z\sim\F
N_n(0,\Sigma)$ for $Z = \Sigma^{1/2}Z_0$ with $Z_0\sim\F N_n(0,I)$.

Define the \textit{unitary group} $U_n(\F) = \{U\in\F^{n\times
n}:U^\dag U = I\}$, better known as the orthogonal, unitary or symplectic
group for $\F= \R, \C, \HH$, respectively. It acts on $\F^n$ by
left multiplication, on which the distribution $\F N_n(0,I)$ is
invariant. Write $M_n(\F) = \{A\in\F^{n\times n}:A^\dag= A\}$ for the
\textit{self-adjoint matrices}, also known as real symmetric, complex
Hermitian or quaternion self-dual. $U_n(\F)$ acts on $M_n(\F)$ by conjugation.

The \textit{Gaussian orthogonal/unitary/symplectic ensemble} (GO/U/SE)
is the probability measure on $M_n(\F)$ described by $A = (X+X^\dag
)/\sqrt{2}$ where $X$ is an $n\times n$ matrix of independent $\F
N(0,1)$ entries. The distribution is invariant under the unitary
action. Furthermore, the algebraically independent entries $A_{ij}$,
$i\ge j$ are statistically independent. (Together, this invariance and
independence characterizes the distribution up to a scale factorr.) For
an entry-wise description, the diagonal entries are distributed as
$N(0,2/\beta)$ while the off-diagonal entries are $\F N(0,1)$.

Fixing a positive integer $r$, we study \textit{rank $r$ additive
perturbations} $A=A_0+P$ of a GO/U/SE matrix $A_0$, where $P = \tilde
P\oplus0_{n-r}$ with $\tilde P\in M_r(\F)$ nonrandom. We will be
interested in the eigenvalues $\lambda_1\ge\cdots\ge\lambda_n$ of
$A$. Of course for a single $P$ their distribution depends only on the
eigenvalues of $P$, but we consider them jointly over all $\tilde P$.

We also consider \textit{real/complex/quaternion Wishart matrices}.
These are random nonnegative matrices in $M_p(\F)$ given by $XX^\dag$
where the \textit{data matrix} $X$ is $p\times n$ with $n$ independent
$\F N_p(0,\Sigma)$ columns. We speak of a $p$-\textit{variate}
Wishart with $n$ \textit{degrees of freedom} and $p\times p$ \textit
{covariance} $\Sigma>0$. Since we are interested in the \emph
{nonzero} eigenvalues $\lambda_1\ge\cdots\ge\lambda_{n\wedge p}$,
we can equally well consider $X^\dag X$. The distribution of $X^\dag X$ may
also be described as $X_0^\dag\Sigma X_0$ where $X_0$ is a $p\times n$
matrix of independent $\F N(0,1)$ entries. The case $\Sigma= I$ is
referred to as the \textit{null case}. We study the \textit{rank $r$
spiked case} where $\Sigma= \tilde\Sigma\oplus I_{p-r}$ with $\tilde
\Sigma\in M_r(\F)$ nonrandom. Once again the eigenvalue distribution
depends only on the eigenvalues of $\Sigma$, but we consider the
spectrum jointly as $\tilde\Sigma$ varies.

Our starting point is a new banded or multi-diagonal form introduced in
Section~\ref{s.canonical}, ideally suited to the types of
perturbations we consider. It is defined for almost every matrix $A\in
M_n(\F)$; given vectors $v_1,\ldots,v_r\in\F^n$, the new basis may
be obtained by applying the Gram--Schmidt process to the first $n$
vectors of the sequence
\[
v_1,\ldots,v_r,Av_1,\ldots,Av_r,A^2v_1,
\ldots,A^2v_r,\ldots.
\]
The result is a $(2r+1)$-diagonal matrix with positive outer diagonals.
For Gaussian and null Wishart ensembles, the change of basis interacts
well with the Gaussian structure; this observation goes back to
\citet{Trotter} in the $r=1$ case. In the GO/U/SE case, we take
$v_1,\ldots,v_r$ to be the initial coordinate basis vectors, while in
the Wishart case we use the initial rows of the data matrix $X$. As in
Part~I, the key observation is then that the perturbations
commute with the change of basis.

For the (unperturbed) Gaussian ensembles, the band form looks like
\[
\lleft[\matrix{ \wt g & g^* &\cdots& g^* & \chi
\cr
g & \wt g & g^* &\cdots&
g^* &\chi
\cr
\vdots& g &\wt g & g^*&\cdots&g^*&\chi
\cr
g & \vdots&g &\ddots&
\ddots& &\ddots&\ddots
\cr
\chi& g &\vdots&\ddots
\cr
& \chi&g
\cr
& & \chi&\ddots
\cr
& & & \ddots} \rright],
\]
where the entries are independent random variables up to the $\dag
$-symmetry with $\wt g\sim N(0,2/\beta)$, $g\sim\F N(0,1)$, and
$\chi\sim\frac{1}{\sqrt{\beta}}\operatorname{Chi}((n-r-k)\beta)$,
$k=0,1,2,\ldots$ going\vspace*{1pt} down the matrix. [Recall that if $Z\sim\R
N_m(0,I)$ then $\llvert Z\rrvert\sim\operatorname{Chi}(m)$.]
For the null Wishart ensemble, the form is best described as follows.
One first obtains a lower $(r+1)$-diagonal form for the data matix $X$
whose nonzero \emph{singular values} are the same as those of $X$. It
looks like
\[
\lleft[\matrix{ \wt\chi
\cr
g & \wt\chi
\cr
\vdots& g &\wt\chi
\cr
g & \vdots&g &
\ddots
\cr
\chi& g &\vdots&\ddots
\cr
& \chi&g
\cr
& & \chi&\ddots
\cr
& & & \ddots
} \rright],
\]
where the entries are independent random variables with $g\sim\F
N(0,1)$, $\wt\chi\sim\frac{1}{\sqrt{\beta}}\operatorname{Chi}((n-k)\beta)$ and $\chi\sim\frac{1}{\sqrt{\beta}}
\operatorname{Chi}((n-r-k)\beta)$, $k=0,1,2,\ldots$ going\vspace*{1pt} down the matrix. One
then forms its multiplicative symmetrization, a $(2r+1)$-diagonal
matrix with the same nonzero eigenvalues as $X$. In both cases, the
perturbations appear in the upper-left $r\times r$ block. Section~\ref
{s.canonical} provides derivations. The obstacle to $\beta
$-generalization at this level is the presence of $\F$ Gaussians in
the intermediate diagonals.

Proceeding with an analogue of the {RRV} convergence result
hinges on reinterpreting these forms as \emph{block tridiagonal} with
$r\times r$ blocks. In Section~\ref{s.limits}, we develop an $M_r(\F
)$-valued analogue of the RRV technology, providing general conditions
under which the principal eigenvalues and corresponding eigenvectors of
such a random block tridiagonal matrix converge to a those of a
continuum half-line random Schr\"odinger operator with matrix-valued
potential. As in Part~I, we allow for a general boundary
condition at the origin.

In Section~\ref{s.clt}, we apply this result to the band forms just
described, proving a process central limit theorem for the potential
and verifying the required tightness assumptions. The limiting operator
turns out to be a multidimensional version of the stochastic Airy
operator, which we now describe.

First, a \textit{standard $\F$ Brownian motion} $\{b_t\}_{t\ge0}$ is
a continuous $\F$-valued random process with $b_0 = 0$ and independent
increments $b_t-b_s\sim\F N(0,t-s)$. (It can be described in terms of
$\beta=1$, 2 or 4 independent standard real Brownian motions.) A
\textit{standard matrix Brownian motion} $\{B_t\}_{t\ge0}$ has
continuous $M_n(\F)$-valued paths with $B_0 = 0$ and independent
increments $B_t - B_s$ distributed as $\sqrt{t-s}$ times a GO/U/SE.
The diagonal processes are thus $\sqrt{2/\beta}$ times standard real
Brownian motions while the off-diagonal processes are standard $\F$
Brownian motions, mutually independent up to symmetry.


Finally, we define the \textit{multivariate stochastic Airy operator}.
Operating on the vector-valued function space $L^2(\R_+,\F^r)$ with
inner product $ \langle f,g \rangle= \int_0^{\infty}
f^\dag g$ and associated norm $\llVert f\rrVert^2 = \int_0^{\infty
}\llvert f\rrvert^2$, it is the random Schr\"
odinger operator
%
%
\begin{equation}
\label{msA} \Hc_\beta= -\frac{d^2}{dx^2} + {\sqrt2}B'_x
+ rx,
\end{equation}
where $B'_x$ is ``standard matrix white noise'', the derivative of a
standard matrix Brownian motion, and $rx$ is scalar. (Here, again
$\beta$ is restricted to the classical values, as the noise term lacks
a straightforward $\beta$-generalization.) The potential is thus the
derivative of a continuous matrix-valued function; rigorous definitions
will appear in Section~\ref{s.limits} in a more general setting.

For now it is enough to know that, together with a general self-adjoint
boundary condition
%
%
\begin{equation}
\label{msABC} f'(0) = Wf(0),
\end{equation}
the multivariate stochastic Airy operator is bounded below with purely
discrete specturm given by a variational principle. Here, $W\in M_r(\F
)$; actually, writing the spectral decomposition $W =\sum_{i=1}^{r}
w_i u_i u_i^\dag$, we formally allow $w_i\in(-\infty,\infty]$.
Writing $f_i = u_i^\dag f$, (\ref{msABC}) is then to be interpreted as
\[
\begin{aligned} f'_i(0) &=& w_i
f_i(0) \qquad\mbox{for }w_i\in\R,
\\
f_i(0) &=& 0 \qquad\mbox{for }w_i=+\infty.
\end{aligned}
\]
We write $W\in M_r^*(\F)$ for this extended set and $\Hc_{\beta,W}$
for (\ref{msA}) together with (\ref{msABC}).

For concreteness, we record that the eigenvalues $\Lambda_0\le\Lambda
_1\le\dots$ and corresponding eigenfunctions $f_0,f_1,\dots$ of $\Hc
_{\beta,W}$ are given, respectively, by the minimum and any minimizer in
the recursive variational problem
\[
\mathop{\inf_{f\in L^2(\R_+)}}_{\llVert f\rrVert=1, f\perp f_0,\ldots
,f_{k-1}} \int_0^\infty
\bigl(\bigl\llvert f'\bigr\rrvert^2 + rx\llvert f
\rrvert^2 \bigr)\,dx +f(0)^\dag Wf(0) + \frac{2}{\sqrt\beta}\int
_0^\infty f^\dag \,dB_x f.
\]
Here, candidates $f$ are only considered if the first integral and
boundary term are finite; the stochastic integral can then be defined
pathwise via integration by parts. The eigenvalues and eigenfunctions
are thus jointly defined random processes indexed over $W$.

%
\begin{remark}\label{r.mon}We note one important property of the
eigenvalue processes, namely the \emph{pathwise monotonicity} of
$\Lambda_k$ in $W$ with respect to the usual matrix partial order.
This is immediate from the variational characterization and the fact
that the objective functional is monotone in $W$. (For the higher
eigenvalues, it is most apparent from the standard min--max formulation
of the variational problem.)
\end{remark}

We can now state the main convergence results. As outlined,
Sections~\ref{s.canonical}--\ref{s.clt} furnish the proofs. One last
shorthand: when we write that a sequence $W_n\in M_r(\F)$ tends to
$W\in M_r^*(\F)$, we mean the following. Writing $W = \sum_{i=1}^r
w_{i}u_i u_i^\dag$ with $w_i\in(-\infty,\infty]$, one has $W_n =
\sum_{i=1}^r w_{n,i}u_i u_i^\dag$ with $w_{n,i}\in\R$ satisfying
$w_{n,i}\to w_i$ for each $i$. In other words, the matrices are
simultaneously diagonal and the eigenvalues tend to the corresponding limits.

%
\begin{theorem}\label{t.G} Let $A=A_0+\sqrt{n}P_n$ where $A_0$ is an
$n\times n$ GO/U/SE matrix and $P_n = \tilde P_n\oplus0_{n-r}$ with
$\tilde P_n\in M_r(\F)$, and let $\lambda_1\ge\cdots\ge\lambda_n$
be its eigenvalues. If
\[
n^{1/3} (1-\tilde P_n) \to W\in M_r^*(\F)\qquad
\mbox{as }n\to\infty
\]
then, jointly for $k=1,2,\dots$ in the sense of finite-dimensional
distributions,
\[
n^{1/6} (\lambda_k-2\sqrt{n} ) \Rightarrow-\Lambda
_{k-1}\qquad\mbox{as }n\to\infty,
\]
where $\Lambda_0\le\Lambda_1\le\dots$ are the eigenvalues of $\Hc
_{\beta,W}$. Convergence holds jointly over $\{P_n\},W$ satisfying the
condition.
\end{theorem}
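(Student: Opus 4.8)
The plan is to reduce Theorem~\ref{t.G} to the abstract block-tridiagonal convergence result of Section~\ref{s.limits} by exhibiting the right-hand side model---the $(2r+1)$-diagonal band form for the perturbed GO/U/SE---as a sequence of random block tridiagonal matrices with $r\times r$ blocks, and then verifying its hypotheses. First I would apply the canonical form of Section~\ref{s.canonical} with $v_1,\dots,v_r$ the standard coordinate basis vectors, observing (as in \citetalias{BV1}) that the perturbation $\sqrt n P_n$ is unaffected by the Gram--Schmidt change of basis since it is supported on $\Span(v_1,\dots,v_r)$; thus $A$ is orthogonally equivalent to the band form pictured in the introduction with $\sqrt n\tilde P_n$ added to the upper-left $r\times r$ block. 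Reading this $(2r+1)$-diagonal matrix as block tridiagonal, and conjugating/rescaling around the spectral edge $2\sqrt n$ in the soft-edge way ($\lambda\mapsto n^{1/6}(\lambda - 2\sqrt n)$, equivalently rescaling the index by $n^{1/3}$ and the matrix by $-n^{1/6}$), produces a sequence of operators to which the Section~\ref{s.limits} result applies once two inputs are supplied: the off-diagonal blocks converge appropriately after rescaling, and the rescaled diagonal blocks---built from the $\tilde g$'s, $g$'s and the $\mathrm{Chi}$ variables---converge, as a process in the block index, to the potential $\rt B'_x + rx$ of $\Hc_\beta$. The boundary term: the $\sqrt n\tilde P_n$ contribution in the top block, under the $n^{1/3}(1-\tilde P_n)\to W$ scaling, becomes exactly the Robin/Dirichlet data $W\in M_r^*(\F)$ of \eqref{msABC}; this is the same mechanism as the rank-one case in \citetalias{BV1} and explains why the hypothesis is phrased as $n^{1/3}(1-\tilde P_n)\to W$.

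The process central limit theorem for the potential is carried out in Section~\ref{s.clt}, so here I would simply invoke it: the centered, rescaled diagonal blocks converge in the Skorokhod sense to $\rt B_x$, with the deterministic drift supplying the $rx$ term, and the requisite tightness/moment bounds on the entries (in particular control of the $\mathrm{Chi}((n-r-k)\beta)$ variables near their means, uniformly over the relevant window of $k$) are established there. The off-diagonal $r\times r$ blocks, after the edge rescaling, converge to a multiple of the identity (the leading $\sqrt n$ part of the outer-diagonal $\chi$'s), which is precisely the structure the abstract theorem expects for it to limit to $-d^2/dx^2$. Jointness over $\{P_n\}, W$ is automatic because the band form is constructed on a single probability space independent of $\tilde P_n$, with the perturbation entering only as the additive parameter in the top block; monotonicity (Remark~\ref{r.mon}) is not needed for this theorem but is consistent with the construction.

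The main obstacle I anticipate is not the CLT itself but \emph{matching conventions and verifying the abstract hypotheses at the boundary block}. The $(2r+1)$-diagonal form is genuinely block tridiagonal only after grouping indices into blocks of size $r$, and the first block is special: it carries the perturbation, the first few $\mathrm{Chi}$ parameters deviate from the generic $(n-r-k)\beta$ pattern (some are $(n-r-k)\beta$-type and the form pictured has the outer diagonal entering only from row $r+1$ on), and the Gram--Schmidt step that produces the form is only defined for a.e.\ $A$, so one must check the perturbed matrix still falls in the good set---here again the commutation with the perturbation helps. Concretely, I would need to confirm that after rescaling, the top block's off-diagonal coupling to the second block has the right normalization for the Section~\ref{s.limits} boundary-condition bookkeeping, and that the finite-rank additive term $\sqrt n\tilde P_n$ minus $2\sqrt n I_r$ rescales to $-n^{1/6}(2\sqrt n - \sqrt n\tilde P_n)\cdot(\text{stuff})$... in a way that cleanly yields $W$; getting the signs and the $n^{1/6}$ versus $n^{1/3}$ powers consistent between the index rescaling and the matrix rescaling is the fiddly step. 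Everything else---the identification of the limit as $\Hc_{\beta,W}$, the discreteness of its spectrum, the variational characterization of $\Lambda_{k-1}$---is quoted from the abstract machinery and the introduction.
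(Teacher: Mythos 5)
Your proposal follows essentially the same route as the paper's: conjugate to the band Jacobi form of Section~\ref{s.canonical} (which commutes with the rank-$r$ perturbation), regard it as block tridiagonal, take $m_n=n^{1/3}$, $H_n = n^{1/6}\bigl(2\sqrt{n}-G_n\bigr)$ and boundary term $W_n = n^{1/3}\bigl(1-\tilde P_n\bigr)$ absorbing the extra $m_n^2$ and the perturbation in the top block, and then invoke Theorem~\ref{t.conv} once the CLT and tightness of Section~\ref{s.clt} verify Assumptions 1--3, with jointness over $\{P_n\},W$ coming for free since the perturbation enters only as an additive parameter on a fixed probability space. The one descriptive inaccuracy is minor: in the paper's bookkeeping the drift $rx$ and part of the noise arise from the fluctuations of the outer-diagonal $\chi$'s sitting in the \emph{off-diagonal} blocks (the process $Y_{n,2}$, with $\eta_{n,1}=0$), not solely from the diagonal blocks, but since you defer the actual limit computation to Section~\ref{s.clt} this does not affect the argument.
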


%
\begin{theorem}\label{t.W} Consider a $p$-variate
real/complex/quaternion Wishart matrix with $n$ degrees of freedom and
spiked covariance $\Sigma_{n,p}=\tilde\Sigma_{n,p}\oplus I_{p-r}>0$
with $\tilde\Sigma_{n,p}\in M_r(\F)$, and let $\lambda_1\ge\cdots
\ge\lambda_{n\wedge p}$ be its nonzero eigenvalues. Writing $m_{n,p}
= (n^{-1/2}+p^{-1/2})^{-2/3}$, if
\[
m_{n,p} \bigl(1-\sqrt{n/p} (\tilde\Sigma_{n,p}-1 ) \bigr) \to
W\in M_r^*(\F)\qquad\mbox{as }n\to\infty
\]
then, jointly for $k=1,2,\dots$ in the sense of finite-dimensional
distributions,
\[
\frac{m_{n,p}^2}{\sqrt{np}} \bigl(\lambda_k-(\sqrt{n}+\sqrt{p})^2
\bigr) \Rightarrow-\Lambda_{k-1}\qquad\mbox{as }n\to\infty,
\]
where $\Lambda_0\le\Lambda_1\le\dots$ are the eigenvalues of $\Hc
_{\beta,W}$. Convergence holds jointly over $\{\Sigma_{n,p}\},W$
satisfying the condition.
\end{theorem}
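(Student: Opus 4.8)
The plan is to reduce the theorem, via the canonical band form of Section~\ref{s.canonical}, to the abstract block-tridiagonal convergence result of Section~\ref{s.limits}, whose hypotheses are then verified in Section~\ref{s.clt}. First I would invoke Section~\ref{s.canonical}: the nonzero eigenvalues of the spiked Wishart matrix $XX\dg$ (equivalently $X\dg X$) coincide with those of the $(2r+1)$-diagonal self-adjoint matrix obtained as the multiplicative symmetrization of the lower $(r+1)$-diagonal form $\hat X$ of the data matrix, whose nonzero singular values match those of $X$. As in \citetalias{BV1} the crucial structural fact is that the spike commutes with the Gram--Schmidt change of basis, so all covariances $\tilde\Sigma_{n,p}$ are realized simultaneously on one probability space, $\tilde\Sigma_{n,p}$ entering only the upper-left $r\times r$ block of $\hat X$ and hence of its symmetrization. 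Fixing a single copy of the unperturbed band ensemble and treating the spike as a deterministic parameter then also takes care of the ``jointly over $\{\Sigma_{n,p}\}$'' clause, provided the eventual convergence is in a mode strong enough to be joint over the boundary data.

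Next I would group consecutive rows and columns into blocks of size $r$, turning the $(2r+1)$-diagonal matrix into a block tridiagonal matrix with $r\times r$ blocks, embed the block-indexed sequences into $L^2(\R_+,\F^r)$ on a suitably fine mesh (of order $m_{n,p}^{-1}$), and apply the orientation-reversing affine rescaling fixed by the normalization in the statement, $M\mapsto -(m_{n,p}^2/\sqrt{np})\bigl(M-(\sqrt n+\sqrt p)^2 I\bigr)$, together with the mesh-dependent rescaling of the off-diagonal part. One checks this puts the family of operators into the form treated in Section~\ref{s.limits}: a discrete Laplacian, a matrix-valued diagonal potential assembled from the $\chi$- and $\F N(0,1)$-entries, and a boundary term at the origin carrying the upper-left block.

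The heart of the argument is then to verify the hypotheses of the Section~\ref{s.limits} result, which is the job of Section~\ref{s.clt}. Two things are needed. A process-level central limit theorem: using the asymptotics of $\tfrac{1}{\sqrt\beta}\mathrm{Chi}((n-k)\beta)$ and $\tfrac{1}{\sqrt\beta}\mathrm{Chi}((n-r-k)\beta)$ together with a martingale CLT, the rescaled potential---integrated along the half-line---converges in the path topology of Section~\ref{s.limits} to $x\mapsto \rt B_x+\tfrac r2 x^2$, i.e.\ to a primitive of $\rt B'_x+rx$, the fluctuations of the diagonal entries organizing themselves into standard matrix white noise and the drift into the scalar linear term. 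This is precisely where the two-parameter nature of the Wishart scaling enters: since $n,p\to\infty$ arbitrarily subject only to control of $m_{n,p}$, the variance computation and the error control must be carried out without a limiting ratio, which is the genuinely new point over \citetalias{BV1}. Second, the tightness and uniform oscillation estimates required by Section~\ref{s.limits}---lower bounds on the potential away from the edge, control of the leading Riccati/Sturm quantities---must be established, now in the matrix-valued setting.

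Finally, the hypothesis $m_{n,p}\bigl(1-\sqrt{n/p}(\tilde\Sigma_{n,p}-1)\bigr)\to W$ is calibrated exactly so that the discrete boundary data converges to the self-adjoint Robin condition $f'(0)=Wf(0)$; an eigenvalue $w_i=+\infty$ of $W$ corresponds to Dirichlet behaviour in the direction $u_i$, which is reached as a monotone limit and can be handled by sandwiching between genuine Robin conditions using the pathwise monotonicity of $\Lambda_k$ in $W$ from Remark~\ref{r.mon}. Applying the convergence result of Section~\ref{s.limits} then gives joint convergence (in $k$, and over the spikes) of the low-lying eigenvalues and eigenfunctions of the rescaled band operators to those of $\Hc_{\beta,W}$, and undoing the affine rescaling yields $\tfrac{m_{n,p}^2}{\sqrt{np}}\bigl(\lambda_k-(\sqrt n+\sqrt p)^2\bigr)\Rightarrow -\Lambda_{k-1}$. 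I expect the main obstacle to be the pair of inputs in the previous paragraph: the process CLT and, especially, the matrix-valued tightness and oscillation bounds, carried out uniformly under the weak two-parameter scaling; the persistent bookkeeping of the $r\times r$ block structure---matrix Riccati comparison, matrix Sturm oscillation---is the other recurring difficulty, but it is the CLT-plus-tightness package in this generality that carries the real weight.
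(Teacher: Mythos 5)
Your proposal follows the paper's own route: the band Jacobi form of Section~\ref{s.canonical} reinterpreted as a block tridiagonal matrix, the general convergence result of Section~\ref{s.limits} (Theorem~\ref{t.conv}), and the verification of Assumptions 1--3 in Section~\ref{s.clt} with the scaling \eqref{mHW} and boundary term $W_{n,p}=m_{n,p}\bigl(1-\sqrt{n/p}(\tilde\Sigma_{n,p}-1)\bigr)$, with joint convergence over spikes coming from realizing all $\tilde\Sigma_{n,p}$ on one copy of the null band ensemble. The only step you leave implicit, which the paper treats explicitly, is that the spike enters the upper-left block as the \emph{random} perturbation $\tilde L_0\dg(\tilde\Sigma_{n,p}-I_r)\tilde L_0$ rather than $n(\tilde\Sigma_{n,p}-I_r)$, so one must additionally show $\frac{m_{n,p}}{\sqrt{np}}\bigl(n\tilde\Sigma_{n,p}-\tilde L_0\dg\tilde\Sigma_{n,p}\tilde L_0\bigr)\to 0$ in probability and absorb this fluctuation into $Y_{n,1}$; also, Dirichlet directions $w_i=+\infty$ are already covered by Assumption 3 of Theorem~\ref{t.conv}, so the Robin-sandwiching via Remark~\ref{r.mon} you suggest is not needed (though it would also work).
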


%
\begin{remark}
In the band basis described above, we also have joint convergence of
the corresponding eigenvectors to the eigenfunctions of $\Hc_{\beta
,W}$. In detail, the eigenvectors should be embedded in $L^2(\R_+)$ as
step functions with step width $n^{-1/3}$ in the Gaussian case and
$m_{n,p}^{-1}$ in the Wishart case, and convergence is in law with
respect to the $L^2$ norm topology. To be precise, one should use
either subsequences or spectral projections; one could also formulate
the joint eigenvalue-eigenvector convergence in terms of the norm
resolvent topology. See Theorem~\ref{t.conv} and the remark that follows.
\end{remark}

We now give the two promised alternative characterizations of the
limiting eigenvalue laws. Fix $\beta=1,2,4$ and $W\in M_r^*(\F)$ with
eigenvalues $-\infty<w_1\le\cdots\le w_r\le\infty$. Writing $\P$
for the probability measure associated with $\Hc_{\beta,W}$ and its
spectrum $\{\Lambda_0\le\Lambda_1\le\ldots\}$, let
\[
F_{\beta}^{k}(x;w_1,\ldots,w_r) = \P(-
\Lambda_k\le x)
\]
for $k=0,1,\ldots.$ Write\vspace*{1pt} simply $F_\beta= F_\beta^0$ for the ground
state distribution (limiting largest eigenvalue law). Once again, the
generalization from Part~I is not straightforward. The
proofs are contained in Section~\ref{s.alternative}.

%
\begin{theorem}\label{t.SDE} Let $\P_{x_0,(w_1,\ldots,w_r)}$ be the
measure on paths $(p_1,\ldots,p_r):[x_0,\infty)\to(-\infty,\infty
]^r$ determined by the coupled diffusions
%
%
\begin{equation}
\label{SDE} dp_i = \frac{2}{\sqrt{\beta}}\,db_i +
\biggl(rx-p_i^2+\sum_{j\neq
i}
\frac{2}{p_i-p_j} \biggr)\,dx
\end{equation}
with initial conditions $p_i(x_0)=w_i$ and entering into $\{p_1<\cdots
<p_r\}$, where $b_1,\ldots,b_r$ are independent standard Brownian
motions; particles $p_i$ may explode to $-\infty$ in finite time
whereupon they are restarted at $+\infty$. Then
%
%
\begin{equation}
\label{explosions} F_\beta(x;w_1,\ldots,w_r) =
\P_{x/r,(w_1,\ldots,w_r)}\qquad(\mbox{no explosions}).
\end{equation}
More generally,
%
%
\begin{equation}
\label{explosionsh} F_\beta^k(x;w_1,
\ldots,w_r) = \P_{x/r,(w_1,\ldots,w_r)}\qquad(\mbox{at most $k$ explosions}).
\end{equation}
\end{theorem}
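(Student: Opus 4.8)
The plan is to reduce the identities \eqref{explosions}--\eqref{explosionsh} to the \emph{matrix Riccati transform} together with \emph{matrix Sturm oscillation theory}, following the scalar blueprint of \citetalias{RRV} and \citetalias{BV1} but now with $M_r(\F)$-valued objects. The key point is that, after an affine reparametrization, the coupled diffusion \eqref{SDE} is exactly the eigenvalue process of the Riccati flow attached to the eigenvalue equation $\Hc_{\beta,W}f=\lambda f$, and that the number of times this flow explodes equals the number of eigenvalues of $\Hc_{\beta,W}$ lying strictly below $\lambda$. Since $\{-\Lambda_k\le x\}=\{\Lambda_k\ge -x\}=\bigl\{\#\{j:\Lambda_j<-x\}\le k\bigr\}$ up to a null event, this immediately yields \eqref{explosionsh}, with \eqref{explosions} the special case $k=0$.

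First I would set up the transform. Fix a level $\lambda$, let $F$ be the $M_r(\F)$-valued solution of $-F''+\rt B_x'F+rxF=\lambda F$ (read in integrated form, as in \citetalias{RRV}) with $F(0)=I$ and $F'(0)=W$, and put $P=F'F^{-1}$. A direct computation gives the matrix SDE $dP=\rt\,dB_x+(rx-\lambda-P^2)\,dx$ with $P(0)=W$; then $P-P\dg$ solves a linear equation with zero initial data, so $P$ stays self-adjoint until it explodes. Explosions of $P$ occur precisely where $F$ becomes singular — generically in a one-dimensional direction — after which the underlying flow continues smoothly on the Lagrangian Grassmannian and, read back in the Riccati chart, a single eigenvalue of $P$ re-enters at $+\infty$ while the rest stay finite. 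Applying It\^o's formula to the ordered eigenvalues $p_1\le\dots\le p_r$ of $P$ produces $dp_i=\trb\,db_i+\bigl(rx-\lambda-p_i^2+\sum_{j\neq i}\tfrac{2}{p_i-p_j}\bigr)\,dx$ with independent standard Brownian motions $b_i$, valid on the open Weyl chamber, which the eigenvalues enter instantaneously even when $W$ has repeated eigenvalues. Finally, writing the Riccati flow in a time variable $t\in[0,\infty)$, the substitution $t=s-x_0$ together with the choice $\lambda=-rx_0$ carries it into exactly the process \eqref{SDE} on $s\in[x_0,\infty)$ started from $(w_1,\dots,w_r)$; shifting the driving Brownian motion is harmless since only its increments enter. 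Taking $x_0=x/r$, so that $\lambda=-x$, matches the normalization in \eqref{explosionsh}.

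It remains to count. By the matrix Sturm oscillation theorem — either imported from the oscillation theory of linear Hamiltonian systems or, preferably, proved directly from the variational characterization of $\Hc_{\beta,W}$ and the monotonicity of Remark \ref{r.mon}, in the manner of \citetalias{RRV} — the number of eigenvalues of $\Hc_{\beta,W}$ strictly below $\lambda$ equals the number of conjugate points of the boundary value problem on $(0,\infty)$, hence the number of explosions of the Riccati flow $P$; this count is a.s.\ finite because $\Hc_{\beta,W}$ has discrete spectrum bounded below. Transporting this through the reparametrization with $\lambda=-x$, the number of explosions of $(p_1,\dots,p_r)$ on $(x/r,\infty)$ equals $\#\{j:\Lambda_j<-x\}$, which is \eqref{explosionsh}; the $k=0$ case is \eqref{explosions}.

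I expect the technical heart of the argument to be making the matrix Riccati flow rigorous as a boundary-to-boundary diffusion and matching it to the spectral count pathwise. Concretely: that explosions occur one eigenvalue at a time with the others staying finite; that the re-entry value is $+\infty$ and the subsequent dynamics is again governed by \eqref{SDE}; that the eigenvalues do not collide for $s>x_0$, which should hold precisely for $\beta\ge 1$ by comparison with the non-colliding property of the drift-free system driven by the $\tfrac{2}{p_i-p_j}$ repulsion; and that only finitely many explosions occur below any fixed level. The $w_i=+\infty$ (Dirichlet) components of $W$ would be handled by a monotone limit $w_i\nearrow\infty$, using the monotonicity of $\Lambda_k$ in $W$ (Remark \ref{r.mon}) on one side and the monotone dependence of the explosion count on the starting point on the other, or alternatively by starting the corresponding Riccati eigenvalue directly at $+\infty$. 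One must also check that the scalar ($r=1$) statements of \citetalias{BV1} are recovered and that the oscillation theory remains valid in the present white-noise-potential setting, where everything has to be read off the Riccati SDE rather than a classical eigenvalue equation.
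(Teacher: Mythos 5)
Your proposal is correct and follows essentially the same route as the paper: rewrite the eigenvalue problem as a first-order system, pass to the matrix Riccati flow $P=F'F^{-1}$ whose ordered eigenvalues satisfy the Dyson-type SDE with explosion/restart, identify the number of explosions with the number of eigenvalues of $\Hc_{\beta,W}$ below $\lambda$ via matrix oscillation theory (the paper imports the finite-interval count from Bohner--Kratz-type results and extends to the half-line by a variational truncation argument, Theorem~\ref{t.osc}), and then apply the affine time shift with $\lambda=-x$, $x_0=x/r$. The only cosmetic differences are that the paper initializes the conjoined basis as $(F(0),G(0))=(\tilde I,\tilde W)$ so that Dirichlet components are built in directly rather than obtained by a monotone limit, and it treats the explosion/restart rigorously through the chart $\tilde P=(P-a)^{-1}$ exactly as you anticipate.
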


We describe the diffusion more carefully in Section~\ref
{s.alternative}, asserting that it determines a law on paths valued in
an appropriate space. Probabilistic arguments lead to the following
reformulation in terms of its generator.

%
\begin{theorem}\label{t.PDE} $F_\beta(x;w_1,\ldots,w_r)$ is the
unique bounded function $F:\R\times\R^r\to\R$ symmetric with
respect to permutation of $w_1,\ldots,w_r$ that satisfies the~PDE
%
%
\begin{equation}
\label{PDE} r\frac{\del F}{\del x}+\sum_{i=1}^r
\biggl(\frac{2}{\beta}\frac
{\del^2F}{\del w_i^2}+\bigl(x-w_i^2
\bigr)\frac{\del F}{\del w_i} \biggr)+\sum_{i<j}
\frac{2}{w_i-w_j} \biggl(\frac{\del F}{\del w_i}-\frac{\del
F}{\del w_j} \biggr) = 0\hspace*{-30pt}
\end{equation}
%
and the boundary conditions
%
%
\begin{eqnarray}
\label{PDEBC1} F&\to&1\qquad\mbox{as }x\to\infty\mbox{ with }w_1,
\ldots,w_r\mbox{ bounded below;}
\\
\label{PDEBC2} F&\to&0\qquad\mbox{as any }w_i\to-\infty\mbox{ with }x
\mbox{ bounded above.}
\end{eqnarray}
Furthermore, $F_\beta$ is ``continuous to the boundary'' as one or
several $w_i\to+\infty$. For subsequent eigenvalue laws $F_\beta
^k(x;w_1,\ldots,w_r)$, (\ref{PDEBC2}) is replaced with the recursive
boundary condition
%
%
\begin{eqnarray}
\label{PDEBCm}
F^k(x;w_1,\ldots,w_r)\to
F^{k-1}\bigl(x^*;w_1^*,\ldots,w_{r-1}^*,+\infty
\bigr)
\nonumber\\[-8pt]\\[-8pt]
\eqntext{\mbox{as }x\to x^*\in\R, w_i\to w_i^*\in\R\mbox{ for
}i=1,\ldots,r-1,\mbox{ and }w_r\to-\infty.}
\end{eqnarray}
\end{theorem}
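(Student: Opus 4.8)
The plan is to identify \eqref{PDE} as the backward Kolmogorov equation of the diffusion in Theorem~\ref{t.SDE}, to read off the boundary data from the behaviour of that diffusion, and to obtain uniqueness from an It\^o--optional-stopping representation; this follows the scheme used for the rank-one case in \citetalias{BV1}, with Dyson's repulsion in place of the scalar drift. By Theorem~\ref{t.SDE}, $F_\beta(x;w_1,\dots,w_r)=u(x/r,w_1,\dots,w_r)$ where $u(x_0,w)=\P_{x_0,w}(\text{no explosions})$ for the process \eqref{SDE} started at time $x_0$. In its time variable this is a time-inhomogeneous diffusion on the chamber $\{w_1<\dots<w_r\}$ with generator
\[
\Lc_t \;=\; \tb\sum_{i=1}^r\del_{w_i}^2 \;+\; \sum_{i=1}^r\Bigl(rt-w_i^2+\sum_{j\ne i}\tfrac{2}{w_i-w_j}\Bigr)\del_{w_i},
\]
killed on reaching the cemetery state obtained by sending a coordinate to $-\infty$; for $\beta=1,2,4$ the Dyson-type repulsion keeps the paths off the collision walls, so killing occurs only at $-\infty$. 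The survival probability is space-time harmonic---$s\mapsto u(s\wedge\tau,p(s\wedge\tau))$ is a bounded martingale---so $\del_{x_0}u+\Lc_{x_0}u=0$ in the interior; substituting $x_0=x/r$ turns $\del_{x_0}$ into $r\,\del_x$ and $rx_0$ into $x$, and rewriting $\sum_i\sum_{j\ne i}\tfrac{2}{w_i-w_j}\del_{w_i}=\sum_{i<j}\tfrac{2}{w_i-w_j}(\del_{w_i}-\del_{w_j})$ gives exactly \eqref{PDE}. Symmetry of $F_\beta$ is inherited from the exchangeability of the particle system, and interior smoothness from the nondegeneracy of $\Lc_t$ in $w$ together with the noncollision estimates (standard parabolic regularity off the walls, while across a wall $w_i=w_j$ the combination $(w_i-w_j)^{-1}(\del_{w_i}F-\del_{w_j}F)$ extends continuously because $\del_{w_i}F-\del_{w_j}F$ vanishes there by symmetry). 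Thus $F_\beta$ is a bounded symmetric classical solution of \eqref{PDE}.

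\emph{Boundary data.} For \eqref{PDEBC1}: if the diffusion starts at time $t=x/r$ with all coordinates bounded below, the drift $rt-p_i^2$ is strongly positive while $p_i\le\sqrt{rt}$, so comparison with the scalar Riccati equation shows the particles are swept onto the stable branch $p_i\approx+\sqrt{rt}$ and the explosion probability is $o(1)$; hence $F_\beta\to1$. For \eqref{PDEBC2}: if some $w_i\to-\infty$ with $x$ bounded above, then while $p_i$ is very negative $dp_i\le\trb\,db_i-\tfrac12 p_i^2\,dx$, a supersolution that explodes in time $O(\abs{w_i}^{-1})$ with probability $\to1$; hence $F_\beta\to0$. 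Continuity as one or more $w_i\to+\infty$ follows by coupling the path from a large finite $w_i$ to the path from the entrance point $+\infty$: they coalesce before either explodes with probability $\to1$, so the survival probabilities agree in the limit. Finally \eqref{PDEBCm} is a consequence of \eqref{explosionsh} and the strong Markov property: as $w_r\to-\infty$ the $r$-th particle explodes at a time $\to x^*/r$ and is restarted at $+\infty$, consuming exactly one explosion, so ``at most $k$ explosions'' for the limit configuration is the same as ``at most $k-1$'' for the configuration $(w_1^*,\dots,w_{r-1}^*,+\infty)$, i.e.\ $F_\beta^{k-1}(x^*;w_1^*,\dots,w_{r-1}^*,+\infty)$, which is well defined by the $+\infty$-continuity just obtained. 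The same inputs show each $F_\beta^k$ is a bounded symmetric classical solution of \eqref{PDE}.

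\emph{Uniqueness.} Let $F$ be a bounded symmetric classical solution of \eqref{PDE} with \eqref{PDEBC1} and \eqref{PDEBC2}. Run \eqref{SDE} from $(x_0,w)$ with $x_0=x/r$ and put $N_s=F(rs,p_1(s),\dots,p_r(s))$; by It\^o and \eqref{PDE}, $N$ is a bounded local, hence true, martingale up to the first explosion time $\tau$. On $\{\tau=\infty\}$ the particles are carried to the stable branch, so they stay bounded below while $rs\to\infty$, and \eqref{PDEBC1} forces $N_s\to1$; on $\{\tau<\infty\}$ the exploding coordinate tends to $-\infty$ with time and the other coordinates bounded, so \eqref{PDEBC2} forces $N_{\tau-}=0$. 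Optional stopping and bounded convergence give $F(x,w)=\P_{x_0,w}(\text{no explosions})$, which is $F_\beta(x;w)$ by \eqref{explosions}. For $F^k$ with $k\ge1$ one replaces \eqref{PDEBC2} by \eqref{PDEBCm} and inducts on $k$: the martingale $N_s=F^k(rs,p(s))$ now has $N_{\tau-}=F^{k-1}(r\tau;\text{surviving coordinates},+\infty)$ by \eqref{PDEBCm}, which by the strong Markov property and the inductive hypothesis equals $\P(\text{at most }k-1\text{ further explosions}\mid\Fc_\tau)$; optional stopping then gives $F^k(x,w)=\P_{x_0,w}(\text{at most }k\text{ explosions})=F_\beta^k(x;w)$ by \eqref{explosionsh}.

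\emph{Main obstacle.} The delicate part is executing the uniqueness representation rigorously on the unbounded state space with quadratically growing coefficients. One must localise so that $N$ is genuinely a martingale to which optional stopping applies; prove that on the survival event the time variable really escapes to $+\infty$ while the particles remain bounded below---this is where the \citetalias{RRV}-type analysis of stable versus unstable branches of the Riccati flow must be carried out for the interacting system; control the sequence of explosion-and-restart times in finite intervals and the passage through the $w_i=+\infty$ boundary at each restart; and establish enough regularity of the solutions up to the collision walls, where the Dyson noncollision and entrance estimates do the work. Each of these is exactly where the rank-one argument of \citetalias{BV1} must be upgraded to the multi-particle setting.
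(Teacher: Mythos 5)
Your proposal follows essentially the same route as the paper: identify \eqref{PDE} as the Kolmogorov backward equation for the explosion probability of the diffusion in Theorem~\ref{t.SDE}, read the boundary data off the behaviour of that diffusion (including the chamber-gluing/strong-Markov picture behind \eqref{PDEBCm}), and prove uniqueness by optional stopping of the bounded martingale $F(rs;\pp_s)$ at the first explosion, using that on the survival event the particles stay bounded below while time runs to infinity (the paper's Lemma~\ref{l.dprops}). The only deviations are in detail, not in approach: the paper obtains \eqref{PDEBC1} from monotonicity in $\ww$ (Remark~\ref{r.mon}) rather than a stable-branch comparison, and treats $w_i\to-\infty$, $w_i\to+\infty$ and the seams via the local coordinate $1/w_i$ rather than supersolution and coupling estimates.
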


At $\beta=2$, these distributions were obtained in {BBP}
in the form of Fredholm determinants of finite-rank perturbations of
the Airy kernel. \citet{B} derived Painlev\'e II formulas, and by
a symbolic computation with a computer algebra system we were able to
verify that the latter satisfy the PDE (\ref{PDE}) for $r=2,3,4, 5$;
details are described in Section~\ref{s.painleve}. A pencil-and-paper
proof for all $r$ was found since the initial posting [\citet{BBaik}].


We make two final remarks. From the finite $n$ matrix models it is
clear that the ``rank $r$ deformed'' limiting distributions $F_{\beta
,r}(x;w_1,\ldots,w_r)$ reduce to those for a lower rank $r_0<r$ in the
following way:
\[
F_{\beta,r}(x;w_1,\ldots,w_{r_0},+\infty,\ldots,+
\infty) = F_{\beta,r_0}(x;w_1,\ldots,w_{r_0}).
\]
Unfortunately, this reduction relation is not readily apparent from any
of our characterizations (operator, SDE or PDE).

Lastly, the SDE and PDE characterizations seem to make sense for all
$\beta>0$ (although one has to be careful for $\beta<1$). It would be
interesting to find natural ``general $\beta$ multi-spiked models'' at
finite $n$, interpolating between those studied here at $\beta= 1,2,4$
and generalizing those introduced in Part~I for $r=1$. At
$\beta=2$, perhaps one could discover a relationship with formulas of
\citet{BW}.

\section{A canonical form for perturbation in a fixed subspace}\label{s.canonical}

In Part~I, we observed that the tridiagonal models of
Gaussian and Wishart matrices were amenable to rank one perturbation.
In this section, we introduce a banded (also block tridiagonal)
generalization amenable to higher-rank perturbation. We first describe
it as a natural object of pure linear algebra; we then show how it
interacts with the structure of Gaussian and Wishart random matrices to
produce the band forms displayed in the \hyperref[sec1]{Introduction}.

The basic facts of ``linear algebra over $\F$'', where $\F$ may be
$\R$, $\C$ or the skew field of quaternions $\HH$, are summarized in
Appendix E of \citet{AGZ}. Everything we need (inner product
geometry, self-adjointness, eigenvalues, and the spectral theorem)
simply works over $\HH$ as expected, keeping in mind only that
nonreal scalars may not commute. 

\subsection{The band Jacobi form as an algebraic object}

We present a natural ``canonical form'' for studying perturbations in a
fixed subspace of dimension $r$. It is a $(2r+1)$-diagonal band matrix
generalizing the symmetric tridiagonal Jacobi form, which is the $r=1$
case. The outermost diagonals continue to be positive; however,
intermediate diagonals between the main and outermost ones are not in
general real. Once again, the presence of $\F$ Gaussians is the
obstacle to writing down a general $\beta$ analogue.

We begin with a geometric, coordinate-free formulation.

%
\begin{theorem}\label{t.band_inv}
Let $T$ be a self-adjoint linear transformation on a finite-dimensional
inner product space $V$ of dimension $n$ over $\F$. An orthonormal
sequence $\{v_1,\ldots,v_r\}\subset V$ with $1\le r\le n$ can be
extended to an ordered \mbox{orthonormal} basis $\{v_1,\ldots,v_n\}$ for $V$
such that $ \langle v_i,Tv_j \rangle\ge0$ for $\llvert
i-j\rrvert=r$ and $ \langle v_i,Tv_j \rangle=0$ for
$\llvert i-j\rrvert>r$. Furthermore, if $ \langle
v_i,Tv_j \rangle> 0$ for $\llvert i-j\rrvert=r$ then
the extension is unique.
\end{theorem}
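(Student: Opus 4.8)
The plan is to proceed by a block Gram--Schmidt (Lanczos-type) construction applied to the sequence $v_1,\ldots,v_r,Tv_1,\ldots,Tv_r,T^2v_1,\ldots$ indicated in the introduction. Concretely, I would build the basis in blocks $V_0=\{v_1,\ldots,v_r\}$, then $V_1$ by orthonormalizing $TV_0$ against $V_0$, then $V_k$ by orthonormalizing $TV_{k-1}$ against $V_{k-2}\cup V_{k-1}$, and so on, stopping when the accumulated vectors span all of $V$ (and discarding any that become zero along the way, which happens exactly when the Krylov space saturates). Within each block I would run ordinary Gram--Schmidt on the $r$ (or fewer) vectors in the natural order, taking the pivot diagonal entries $\ip{v_i,Tv_{i-r}}$, which are the norms arising in the orthogonalization of $TV_{k-1}$ against $V_{k-1}$, to be nonnegative. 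This is the standard trick that fixes the sign ambiguity and gives positivity of the outer diagonal; working over $\HH$ costs nothing here since these pivot quantities are real norms.

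The key step is then to verify the band structure, i.e. that $\ip{v_i,Tv_j}=0$ whenever $|i-j|>r$. This is the usual Lanczos three-term (here $(2r{+}1)$-term) recurrence argument: by self-adjointness $\ip{v_i,Tv_j}=\ip{Tv_i,v_j}$, and $Tv_i$ lies in $\Span(V_0\cup\cdots\cup V_{k+1})$ when $v_i\in V_k$; if $v_j\in V_\ell$ with $\ell\ge k+2$ then $v_j\perp V_0\cup\cdots\cup V_{k+1}$ by construction, forcing the inner product to vanish. Since block $V_k$ occupies indices roughly $kr+1,\ldots,(k{+}1)r$, the condition $\ell\ge k+2$ translates (after one checks the index bookkeeping at the block boundaries) precisely into $|i-j|>r$. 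I would also note that $\ip{v_i,Tv_j}\ge 0$ for $|i-j|=r$ holds because such entries are exactly the chosen nonnegative pivots.

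For uniqueness, I would argue by induction on the index $k$, assuming $\ip{v_i,Tv_j}>0$ for all $|i-j|=r$ (equivalently, all pivots are strictly positive, equivalently the Krylov flag is strict and the construction never terminates early before exhausting $V$). The point is the standard rigidity of the Lanczos recurrence: once $v_1,\ldots,v_{m}$ are fixed, the band condition expresses $Tv_{m-r+1}$ (say) as a known combination of $v_1,\ldots,v_m$ plus a positive multiple of $v_{m+1}$, since the only nonzero entries in that column below the diagonal band are the single outer entry; strict positivity of that outer entry then pins down $v_{m+1}$ uniquely as the normalization (with positive pivot) of the component of $Tv_{m-r+1}$ orthogonal to $v_1,\ldots,v_m$. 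One must handle the block-boundary indices carefully — early columns have fewer sub-band entries — but the same principle applies.

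The main obstacle I anticipate is not conceptual but organizational: getting the index bookkeeping exactly right at the transition between the initial regime (the first $r$ columns, and columns where the Krylov block is not yet full width $r$) and the generic banded regime, and making sure the "discard zero vectors" case in the non-strict setting does not spoil the claimed band width or the extension's existence. A clean way to sidestep some of this is to phrase the construction abstractly: let $K_k=\Span\{T^j v_i: 0\le j\le k,\ 1\le i\le r\}$ be the block Krylov subspaces, note $K_0\subseteq K_1\subseteq\cdots$ stabilizes at some $K_\infty$, reduce first to the case $K_\infty=V$ (otherwise extend arbitrarily by an orthonormal basis of $K_\infty^\perp$, which is $T$-invariant, so decouples), and then define $v_i$ for $i$ in block $k$ to be an orthonormal basis of $K_k\ominus K_{k-1}$ chosen compatibly with the pivots; the band and positivity properties, and uniqueness under strict positivity, then follow from $TK_{k}\subseteq K_{k+1}$ together with self-adjointness as above.
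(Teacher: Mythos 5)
Your construction (sequential Gram--Schmidt on the interleaved sequence $v_1,\ldots,v_r,Tv_1,\ldots,Tv_r,T^2v_1,\ldots$, with nonnegative pivots) is exactly the paper's, and your uniqueness argument is essentially the paper's as well. However, the step where you \emph{verify} the band structure does not prove what the theorem asserts. The argument ``$Tv_i\in\Span(V_0\cup\cdots\cup V_{k+1})$ for $v_i\in V_k$, and $v_j\perp V_0\cup\cdots\cup V_{k+1}$ when $v_j\in V_\ell$, $\ell\ge k+2$'' only yields \emph{block} tridiagonality, i.e.\ $\ip{v_i,Tv_j}=0$ for $\abs{i-j}\ge 2r$; it says nothing about entries in adjacent blocks with $r<\abs{i-j}<2r$ (e.g.\ for $r=2$ the entry $\ip{v_1,Tv_4}$ sits in an adjacent block but is forbidden by the theorem), so the claim that ``$\ell\ge k+2$ translates precisely into $\abs{i-j}>r$'' is false for $r\ge 2$. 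The correct statement, which your construction does deliver but your verification never invokes, is the column-level containment $Tv_j\in\Span\{v_1,\ldots,v_{j+r}\}$ together with $\ip{v_{j+r},Tv_j}\ge 0$ (this is exactly the equivalent reformulation the paper's induction runs on); combined with hermitian symmetry it gives the full $(2r+1)$-band. The same defect afflicts your abstract reformulation: an orthonormal basis of $K_k\ominus K_{k-1}$ chosen using only the flag $K_0\subset K_1\subset\cdots$ and $TK_k\subseteq K_{k+1}$ is determined only up to a unitary within each block, and a generic such choice is block tridiagonal but \emph{not} $(2r+1)$-banded; the finer one-vector-at-a-time flag (equivalently, lower-triangularity of the off-diagonal blocks with nonnegative diagonal) is essential.

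The handling of the degenerate case is also flawed. First, a dependency can occur before the Krylov space saturates (e.g.\ $v_1$ an eigenvector, $v_2$ generic, $r=2$), so ``which happens exactly when the Krylov space saturates'' is not correct. More seriously, ``extend arbitrarily by an orthonormal basis of $K_\infty^\perp$'' does not produce the band form: $K_\infty^\perp$ is $T$-invariant, so the off-diagonal coupling vanishes, but the diagonal block $T|_{K_\infty^\perp}$ written in an \emph{arbitrary} orthonormal basis is full; already for $r=1$, starting from an eigenvector with $\dim K_\infty^\perp\ge 3$, the resulting matrix is not tridiagonal. Existence for every $T$ and every initial sequence therefore requires continuing the construction rather than decoupling: as in the paper's proof, when $Tv_{k-r}$ is already in the span of $v_1,\ldots,v_{k-1}$ one inserts an \emph{arbitrary} unit vector $v_k$ orthogonal to the previous ones (the corresponding outer entry is then $0$) and keeps iterating, so that $T$ is later applied to $v_k$ as well; equivalently one must recursively band-reduce $T|_{K_\infty^\perp}$, not merely pick any basis of it.
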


The point is that the same extension works for $T' = T+P$ provided
$P\in M_n(\F)$ satisfies $P\mid_{\{v_1,\ldots,v_r\}^\perp} = 0$.
In this case $\Span\{v_1,\ldots,v_r\}$ is also an invariant subspace
of $P$ and we speak of \textit{perturbing in this subspace}.

\begin{pf*}{Proof of Theorem~\ref{t.band_inv}}
We give an explicit inductive construction. Along the way,
we will see that the uniqueness condition holds precisely when the
choice is forced at each step.

It is convenient to restate the properties of the orthonormal basis in
the theorem in the following equivalent way: for $r+1\le i\le n$, we
have $ \langle v_{i},Tv_{i-r} \rangle\ge0$ and
$Tv_{i-r}\in\Span\{v_1,\ldots,v_i\}$. Suppose inductively that\break
$v_1,\ldots,v_{k-1}$ have been obtained for some $r+1\le k\le n$,
satisfying the preceding conditions for $r+1\le i\le k-1$. Let $w =
Tv_{k-r}$; we must choose $v_k$ so that $ \langle v_k,w
\rangle\ge0$ and $w\in\Span\{v_1,\ldots,v_k\}$. There are two
cases to consider. If $w\notin\Span\{v_1,\ldots,v_{k-1}\}$ then
$v_k$ must be a multiple of $w' = w-\sum_{i=1}^{k-1} \langle
v_i,w \rangle v_i$; the positivity condition further forces\vspace*{1pt} $v_k
= w'/\llvert w'\rrvert$, which gives $ \langle
v_k,w \rangle=\llvert w'\rrvert>0$. If $w\in\Span\{
v_1,\ldots,v_{k-1}\}$, then any $v_k\in\{v_1,\ldots,v_{k-1}\}^\perp
$ will do, and in this case $ \langle v_k,w \rangle=0$.
\end{pf*}

%
\begin{remark}When uniqueness holds, as is generically the case, the
basis may also be obtained by applying the Gram--Schmidt process to the
first $n$ vectors of the sequence
\[
v_1,\ldots,v_r,Tv_1,\ldots,Tv_r,T^2v_1,
\ldots,T^2v_r,\ldots.
\]
%
\end{remark}

We now state and prove a concrete matrix formulation in which the first
$r$ coordinate vectors play the role of $v_1,\ldots, v_r$. The point of
the second proof is that it emphasizes the resulting band matrix rather
than the change of basis; the algorithm will be used in the next subsection.

%
\begin{theorem}\label{t.band_mat}
Let $A\in M_n(\F)$ and $1\le r\le n$. There exists $U\in U_n(\F
)$ of the form $U=I_r\oplus\tilde U$ with $\tilde U\in U_{n-r}(\F)$
such that $B = UAU^\dag$ satisfies
%
%
\begin{eqnarray}
\label{B1} B_{ij}&\ge&0\qquad\mbox{for }1\le i,j\le n\mbox{ with }
\llvert i-j\rrvert=r,
\\
\label{B2} B_{ij}&=& 0\qquad\mbox{for }1\le i,j\le n\mbox{ with }
\llvert i-j\rrvert>r.
\end{eqnarray}
Furthermore, if strict positivity holds in (\ref{B1}) then $U$ and $B$
as such are unique.
\end{theorem}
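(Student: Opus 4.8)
The plan is to deduce Theorem~\ref{t.band_mat} from the coordinate-free Theorem~\ref{t.band_inv} by simply identifying the abstract inner product space with $\F^n$ and the abstract self-adjoint transformation with (left multiplication by) the matrix $A$, taking $v_1,\ldots,v_r$ to be the standard coordinate vectors $e_1,\ldots,e_r$. First I would note that $A\in M_n(\F)$ is self-adjoint, so Theorem~\ref{t.band_inv} applies and yields an orthonormal basis $v_1,\ldots,v_n$ extending $e_1,\ldots,e_r$ with $\ip{v_i,Av_j}\ge 0$ for $\abs{i-j}=r$ and $\ip{v_i,Av_j}=0$ for $\abs{i-j}>r$. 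Then I would let $V\in U_n(\F)$ be the matrix whose $j$-th column is $v_j$, so that $v_j = Ve_j$, and set $U = V\dg$. The entries of $B = UAU\dg = V\dg A V$ are exactly $B_{ij} = e_i\dg V\dg A V e_j = v_i\dg A v_j = \ip{v_i,Av_j}$, so \eqref{B1} and \eqref{B2} follow immediately from the conclusion of Theorem~\ref{t.band_inv}.

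Next I would verify the claimed block structure $U = I_r\oplus\tilde U$. Since $v_1 = e_1,\ldots,v_r = e_r$, the first $r$ columns of $V$ are $e_1,\ldots,e_r$; because $V$ is unitary its remaining columns $v_{r+1},\ldots,v_n$ are orthogonal to $e_1,\ldots,e_r$, hence supported on the last $n-r$ coordinates. Thus $V$ has block form $I_r\oplus\tilde V$ with $\tilde V\in U_{n-r}(\F)$, and therefore $U = V\dg = I_r\oplus\tilde V\dg$; take $\tilde U = \tilde V\dg$. This also makes transparent the remark following Theorem~\ref{t.band_mat}: the statement that the same $U$ works for $A + P$ whenever $P$ is supported on the top-left $r\times r$ block is exactly the ``perturbing in the subspace $\Span\{e_1,\ldots,e_r\}$'' observation, since $U$ was built only from $e_1,\ldots,e_r$ and $A$, and conjugation by $I_r\oplus\tilde U$ commutes with adding a matrix supported on the first $r$ coordinates in the sense that it again produces a band matrix (the perturbation only alters the top-left block, which stays within the band).

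For the uniqueness clause, suppose strict positivity holds in \eqref{B1}. Given any admissible $U = I_r\oplus\tilde U$ with $B = UAU\dg$ satisfying \eqref{B1}--\eqref{B2} and strict positivity, I would set $V = U\dg = I_r\oplus\tilde U\dg$ and $v_j = Ve_j$; then $v_1,\ldots,v_r = e_1,\ldots,e_r$, the $v_j$ are orthonormal, and $\ip{v_i,Av_j} = B_{ij}$ inherits all the band and strict positivity properties. By the uniqueness half of Theorem~\ref{t.band_inv}, the basis $v_1,\ldots,v_n$ is uniquely determined, hence so is $V$ and thus $U = V\dg$ and $B = UAU\dg$. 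This reduction is essentially bookkeeping; I would not anticipate a genuine obstacle. The only point requiring a little care is matching the two equivalent formulations of the band condition used in the two theorems --- the ``$Tv_{i-r}\in\Span\{v_1,\ldots,v_i\}$'' phrasing from the inductive proof versus the symmetric ``$\abs{i-j}>r \Rightarrow 0$'' phrasing here --- but this equivalence is immediate from self-adjointness, since $\ip{v_i,Av_j} = \overline{\ip{v_j,Av_i}}$ (conjugation in $\F$), so vanishing above the band forces vanishing below it as well. Accordingly, if one prefers, the second proof can instead be given directly by re-running the inductive construction of Theorem~\ref{t.band_inv}'s proof with $V = \F^n$, $T = A$, and $v_i = e_i$, which has the advantage (noted in the paragraph preceding the statement) of exhibiting the explicit algorithm that will be reused in the next subsection; I would present it in that self-contained form.
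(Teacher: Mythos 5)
Your reduction is correct, but it is a genuinely different route from the paper's. You deduce Theorem~\ref{t.band_mat} from the coordinate-free Theorem~\ref{t.band_inv} by taking $T=A$ on $\F^n$ with $v_i=e_i$, observing that the change-of-basis matrix built from the extended orthonormal basis automatically has the block form $I_r\oplus\tilde V$, and reading off \eqref{B1}--\eqref{B2} from $B_{ij}=\ip{v_i,Av_j}$; this is clean, and the block-structure and entry computations go through verbatim over $\HH$ since the order of factors is preserved. The paper instead gives a second, self-contained proof: an explicit algorithm of Householder reflections $U_1,U_2,\ldots$, each acting as $I_{r+k}\oplus U_{n-r-k}(\F)$, that produces $B$ column by column, with uniqueness proved by an induction showing $W=U'U^{-1}$ must be the identity. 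What the paper's route buys is precisely the algorithm: in the next subsection it is run on GO/U/SE and Wishart matrices, tracking the law of $B_k$ after each reflection via unitary invariance of Gaussian vectors, which is how the explicit band ensembles \eqref{G} and \eqref{L}--\eqref{S} are derived; your closing suggestion to re-run the Gram--Schmidt-style construction of Theorem~\ref{t.band_inv} is close in spirit but is not literally the Householder recursion used there. What your route buys is brevity and an explicit statement of the logical relationship between the two theorems. One small caveat on uniqueness: as phrased, you only compare admissible pairs that both have strictly positive outer diagonals, whereas the statement (and the paper's argument, where $a'=\abs{b'}=\abs{b}=a>0$ is forced by unitarity) rules out competitors satisfying merely \eqref{B1}--\eqref{B2}; this is covered if you invoke the uniqueness of Theorem~\ref{t.band_inv} in the form its inductive proof actually gives --- once some extension has all strict inequalities, any extension satisfying the weak conditions coincides with it --- so you should state that you are using it in that strength.
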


We refer to $B$ as the \textit{band Jacobi form} of $A$. The allowed
perturbations here have the form $P = \tilde P\oplus0_{n-r}$ for
$\tilde P\in M_r(\F)$; these are invariant under conjugation by $U$,
so $U(A+P)U^\dag= B+P$.

\begin{pf*}{Proof of Theorem~\ref{t.band_mat}}
We prove existence by giving an explicit algorithm; it
generalizes the Lanczos algorithm, which applies in the case $r= 1$.
\begin{itemize}
%
\item For the first step, let $v = [A_{i,1}]_{r+1\le i\le n}\in\F
^{n-r}$ and take $\tilde U\in U_{n-r}(\F)$ such that $\tilde Uv =
|v|\tilde e_1$, where $\tilde e_1$ is the first standard basis vector
of $\F^{n-r}$. A concrete choice is the Househ\"older reflection $\tilde
U = I_{n-r}-2ww^\dag/w^\dag w$ with $w = v-|v|\tilde e_1$. Set $U_1 =
I_r\oplus\tilde U$ and $B_1 = U_1AU_1^\dag$.
%
\item Continue inductively: having obtained $U_{k},B_{k}$, let $v =
[(B_1)_{i,(k+1)}]_{r+k+1\le i\le n}\in\F^{n-r-k}$ and take $\tilde
U\in U_{n-r-k}(\F)$ such that $\tilde Uv = |v|\tilde e_1$. Set
$U_{k+1} = I_{r+k}\oplus\tilde U$ and $B_{k+1} =
U_{k+1}B_{k}U_{k+1}^\dag$.

\item Stop when $k = n-r$. Let $U = U_{n-r}\cdots U_1$ and $B = B_{n-r}
= UAU^\dag$.
\end{itemize}
It is immediate that $U$ and $B$ have the required properties. The
point is that the $k$th column of $B_k$ already ``looks right'', that
is, $(B_k)_{r+k,k}\ge0$ and $(B_k)_{r+l,k} = 0$ for $l> k$, and
subsequent transformations $U_{k+1},\ldots, U_{n-k}\in\{I_{r+k}\}
\oplus U_{n-r-k}(\F)$ ``don't mess it up''.

Toward uniqueness, suppose that $U',B'=U'A{U'}^\dag$ also have the
required properties and let $W = U'U^{-1}$ so that $B' = WBW^\dag$.
Assume inductively that $W\in\{I_{r+k}\}\oplus U_{n-r-k}(\F)$, which
is certainly true in the base case $k=0$. Write $W = I_{r+k}\oplus
\tilde W$. Let $b = [B_{i,k+1}]_{r+k+1\le i\le n}\in\F^{n-r-k}$ and
similarly for $b'$. Then $b' = \tilde Wb$. But $b = a\tilde e_1$ and
$b' = a'\tilde e_1$ with $a,a'>0$ by assumption. It follows that $a =
a'$ and $\tilde W\tilde e_1 = \tilde e_1$. Hence, $\tilde W \in\{I_1\}
\oplus U_{n-r-(k+1)}(\F)$ and $W \in\{I_{r+k+1}\}\oplus
U_{n-r-(k+1)}(\F)$, completing the induction step. We conclude that $W
= I_n$.
\end{pf*}

\subsection{Perturbed Gaussian and spiked Wishart models}

The change of basis described above interacts very nicely with the
Gaussian structure in Gaussian and Wishart random matrices. The $r=1$
case of this observation is due to \citet{Trotter}, who described
the tridiagonal forms explicitly. His forms fall into the framework of
Theorem~\ref{t.band_inv} by taking the initial vector to be fixed in
the Gaussian case, and taking it to be the top row of the data matrix
in the Wishart case. As we observed in Part~I, the change of
basis commutes with rank one additive perturbations for the Gaussian
case and with rank one spiking for the Wishart case. We now extend the
story to the $r>1$ setting.

In the Gaussian case, we will be perturbing in a fixed (nonrandom)
subspace; without loss of generality this may be taken as the initial
$r$-dimensional coordinate subspace, and so we take the basis of
Theorem~\ref{t.band_inv} that begins with the first $r$ standard basis
vectors. We can therefore obtain the band form by a direct application
of the algorithm from the proof of Theorem~\ref{t.band_mat}. The
Wishart case is a little more complicated; here we want to perturb in
the random subspace spanned by the first $r$ rows of the data matrix.
Our new basis will begin with the Gram--Schmidt orthogonalization of
these initial rows. As in the $r=1$ case, it is most transparent to
construct a lower band form of the data matrix first, afterward
realizing the band Jacobi form as its multiplicative symmetrization. In
both the Gaussian and the Wishart cases, we will see that the
uniqueness condition of Theorem~\ref{t.band_inv} holds almost surely.

Let $A$ be an $n\times n$ GOE matrix. Applying the algorithm from the
proof of Theorem~\ref{t.band_mat} while keeping track of the
distribution of the matrix $B_k$ at each step---the key of course being
the unitary invariance of standard Gaussian vectors---yields the
following band Jacobi random matrix $G = UAU^\dag$:
%
%
\begin{equation}
\label{G} G_{ij} = %
\cases{ \displaystyle\sqrt{\frac{2}\beta}
\wt g_i,& \quad$i=j$,
\vspace*{5pt}\vspace*{5pt}\cr
\displaystyle g_{ij}, &\quad$j<i<j+r$,
\vspace*{5pt}\cr
\displaystyle \frac{1}{\sqrt\beta}\chi_{(n-i+1)\beta},& \quad$i=j+r$,
\vspace*{5pt}\cr
0, &\quad$i>j+r$,
\vspace*{5pt}\cr
G_{ji}^*, &\quad$i<j$}
\end{equation}
for $1\le i,j\le n$, where the random variables appearing explicitly
are independent, $\wt g_i\sim N(0,1)$, $g_{ij}\sim\F N(0,1)$, and
$\chi_k\sim\operatorname{Chi}(k)$. The latter is the distribution of the
length of a $k$-dimensional standard Gaussian vector.

We can introduce a rank $r$ additive perturbation $A = A_0+\sqrt{n}P$,
where $P = \tilde P\oplus0_{n-r}$ with $\tilde P\in M_r(\F)$; since
$P$ commutes with the change of basis $U\in\{I_r\}\oplus U_{n-r}(\F
)$, we can write
%
%
\begin{equation}
\label{Gp} \qquad G = UAU^\dag= U(A_0+\sqrt{n}P)U^\dag=
UA_0U^\dag+ \sqrt{n}P = G_0 + \sqrt{n}P.
\end{equation}
As expected the perturbation shows up undisturbed in the upper-left
$r\times r$ corner of $G$.

Turning to the Wishart case, we first consider the null Wishart random
matrix $X^\dag X$, where $X$ is $p\times n$ with independent $\F N(0,1)$
entries. (Remember that $X^\dag X$ and $X X^\dag$ have the same nonzero
eigenvalues $\lambda_1,\ldots,\lambda_{n\wedge p}$.)
The final form can be described abstractly as given in the basis of
Theorem~\ref{t.band_inv} that extends the Gram--Schmidt
orthogonalization of the first $r$ rows of $X$. One cannot readily
obtain a description of the resulting random matrix from here, however,
so we give another way that generalizes Trotter's original procedure.
It is a ``singular value analogue'' of the algorithm from the proof of
Theorem~\ref{t.band_mat}, producing matrices $U\in U_n(\F)$ and $V\in
U_p(\F)$ such that $L = VXU$ has a ``lower band form'' that is zero
off the main and first $r$ sub-diagonals and positive on the outermost
of these. The key is to work alternately on rows and columns.
\begin{itemize}
\item Take $U_1\in U_n(\F)$ so that the first row of $XU_1$ lies in
the (positive) direction of the first coordinate basis vector of $\F^n$.
\item Take $V_1 = I_r\oplus U_{p-r}(\F)$ so that
$[(V_1XU_1)_{i,1}]_{r+1\le i\le p}\in\F^{p-r}$ lies in the direction
of the first coordinate basis vector of the latter subspace.
\item Take $U_2\in I_1\oplus U_{n-1}(\F)$ so that
$[(V_1XU_1U_2)_{2,j}]_{2\le j\le n}\in\F^{n-1}$ lies in the direction
of the first coordinate basis vector of the latter subspace.
\item Take $V_2\in I_{r+1}\oplus U_{p-r-1}(\F)$ so that
$[(V_2V_1XU_1U_2)_{i,2}]_{r+2\le j\le p}\in\F^{p-r-1}$ lies in the
direction of the first coordinate basis vector of the latter subspace.
\item Continue in this way until the rows and columns both run out
(stop alternating if one runs out before the other).
\end{itemize}
The resulting $L = V_{n\wedge(p-r)}\cdots V_1XU_1\cdots U_{n\wedge p}$
has $n\wedge p$ nonzero columns and $(n+r)\wedge p$ nonzero rows,
which can be described as follows:
%
%
\begin{equation}
\label{L} L_{ij} = %
\cases{ \displaystyle\frac{1}{\sqrt\beta}\wt
\chi_{(n-i+1)\beta}, &\quad$i=j$,
\vspace*{5pt}\cr
g_{ij},& \quad$j<i<j+r$,
\vspace*{5pt}\cr
\displaystyle\frac{1}{\sqrt\beta}\chi_{(p-i+1)\beta}, &\quad$i=j+r$,
\vspace*{5pt}\cr
0, &\quad$i<j$ or
$i>j+r$,}
\end{equation}
where\vspace*{1pt} the entries are independent, $\wt\chi_k,\chi_k\sim\operatorname{Chi}(k)$, $g_{ij}\sim\F N(0,1)$. Truncating the remaining zero rows
or columns, the matrix $S = L^\dag L$ is $(n\wedge p)\times(n\wedge p)$
and has the same \emph{nonzero} eigenvalues as $X^\dag X$. It has the
band form
%
%
\begin{equation}
\label{S}
\qquad S_{ij} = %
\cases{
\displaystyle\frac{1}{\beta}\wt
\chi_{(n-i+1)\beta}^2 + \sum_{i<k<i+r}
\llvert g_{k,i}\rrvert^2+\frac{1}\beta\chi
_{(p-i-r+1)\beta}^2,
\cr
\hspace*{124pt} i=j,
\vspace*{5pt}\cr
\displaystyle\frac{1}{\sqrt\beta}\wt\chi
_{(n-i+1)\beta}g_{ij}+\sum_{i<k<j+r}g_{k,i}^*g_{k,j}+
\frac{1}{\sqrt\beta}g^*_{j+r,i}\chi_{(p-j-r+1)\beta},
\cr
\hspace*{124pt} j<i<j+r,
\vspace*{5pt}\cr
\displaystyle\frac{1}\beta\wt\chi_{(n-i+1)\beta}\chi_{(p-i+1)\beta},\qquad i=j+r,
\vspace*{5pt}\cr
0,\hspace*{113pt} i>j+r,
\vspace*{5pt}\cr
S^*_{ji}, \hspace*{105pt} i<j,}
\end{equation}
where we have ignored the issue of truncation in the final $r$ rows and
columns ($g's$ and $\chi's$ with indices beyond the allowed range
should simply be zero). The change of basis is thus $U_1\cdots
U_{n\wedge p}$; a little thought shows that, as claimed earlier, the
new basis begins with the orthogonalization of the first $r$ rows of
$X$. Since the form~(\ref{S}) satisfies the uniqueness condition of
Theorem~\ref{t.band_inv} a.s., the basis is indeed the one given by
the theorem.

Now we consider the spiked Wishart matrix $X^\dag X = X_0^\dag\Sigma X_0$,
with $\Sigma= \tilde\Sigma\oplus I_{p-r}> 0$. Here $X_0$ is a null
Wishart matrix and $X = \Sigma^{1/2}X_0$. Notice that $X^\dag
X-X_0^\dag
X_0 = X_0^\dag((\tilde\Sigma-I_r)\oplus0 ) X_0$ is indeed
an additive perturbation in the subspace spanned by the first $r$ rows
of $X_0$. Since $\Sigma^{1/2}=\tilde\Sigma^{1/2}\oplus I$ commutes
with the inner transformation $V\in\{I_r\}\oplus U_{p-r}(\F)$, we have
%
\[
L^\dag L = U^\dag X^\dag X U = U^\dag
X_0^\dag\Sigma X_0 U = U^\dag
X_0^\dag V^\dag\Sigma VX_0U =
L_0^\dag\Sigma L_0,
\]
where $L = VXU$ and $L_0 = VX_0U$. The point is that same change of
basis works in the rank $r$ spiked case, and by the lower band
structure of $L_0$, the perturbation shows up in the upper-left
$r\times r$ corner:
%
%
\begin{equation}
\label{Sp} S - S_0 = L^\dag L-L_0^\dag
L_0 = \tilde L_0^\dag(\tilde\Sigma
-I_r)\tilde L_0\oplus0.
\end{equation}
Viewed in terms of the algorithm used to produce $L$, the point is that
the first $r$ rows of $X$ are never ``mixed'' together or with the
lower rows, but only ``rotated'' within themselves.

\section{Limits of block tridiagonal matrices}\label{s.limits}
The banded forms of Section~\ref{s.canonical} may also be considered
as block tridiagonal matrices with $r\times r$ blocks.
In this section, we give general conditions under which such random
matrices, appropriately scaled, converge at the soft spectral edge to a
random Schr\"odinger operator on the half-line with $r\times r$
matrix-valued potential and general self-adjoint boundary condition at
the origin. In Section~\ref{s.clt}, we verify these assumptions for
the two specific matrix models we consider.

Proposition~\ref{p.var} establishes that the limiting operator is
a.s. bounded below with purely discrete spectrum via a variational
principle. The main result is Theorem~\ref{t.conv}, which asserts that
the low-lying states of the discrete models converge to those of the
operator limit.

The scalar $r=1$ case of Part~I, based in turn on
{RRV}, serves as a prototype. Care is required throughout
to adapt the arguments to the matrix-valued setting, and we give a
self-contained treatment.

\subsection{Discrete model and embedding}

Underlying the convergence is the embedding of the discrete half-line
$\Z_+ = \{0,1,\ldots\}$ into the continuum $\R_+ = [0,\infty)$ via
$j\mapsto j/m_n$, where the scale factors $m_n\to\infty$ but with
$m_n = o(n)$. Define an associated embedding of vector-valued function
spaces by step functions:
\[
\ell^2_n\bigl(\Z_+,\F^r\bigr)\hookrightarrow
L^2\bigl(\R_+,\F^r\bigr),\qquad(v_0,v_1,
\ldots) \mapsto v(x) = v_{\lfloor m_n x\rfloor},
\]
which is isometric with $\ell^2_n$ norm $\llVert v\rrVert^2
= m_n^{-1}\sum_{j=0}^\infty\llvert v_j\rrvert^2$. (Recall
that $\F^r$ and $L^2$ have norms $\llvert v\rrvert^2 =
v^\dag
v$ and $\llVert f\rrVert^2 = \int_0^\infty\llvert
f\rrvert^2$, respectively.) Fix a standard basis for $\ell^2_n$
with lexicographic ordering
\[
(e_1,0,\ldots), (e_2,0,\ldots),\ldots,(e_r,0,
\ldots),(0,e_1,0,\ldots),\ldots,
\]
where $e_1,\ldots,e_r$ is the standard basis for $\F^r$. Identify $\F
^n$ with the $n$-dimen\-sional initial coordinate subspace of $\ell
^2_n$, consisting of $\F^r$-valued step-functions supported on the
interval $ [0,\lceil n/r\rceil/m_n )$ and with the final
step value in the subspace spanned by $e_1,\ldots,e_{r - (\lceil
n/r\rceil r-n)}$. Our $n\times n$ matrices will act on $\F^n$ with
respect to the above basis; we will generally assume the embedding $\F
^n\subset\ell^2_n\hookrightarrow L^2$ implicitly.

We define some operators on $L^2$, all of which leave $\ell^2_n$
invariant and may also be considered as infinite block matrices with
$r\times r$ blocks. The translation operator $(T_n f)(x) =
f(x+m_n^{-1})$ extends the left shift on $\ell^2_n$. Its adjoint
$T_n^\dag$ is the right shift, where $T_n^\dag f = 0$ on $[0,m_n^{-1})$.
The difference quotient $D_n = m_n(T_n - 1)$ extends a discrete
derivative. Write $\diag(A_0,A_1,\ldots)$ for both an $r\times r$
block diagonal matrix and its extension to a pointwise matrix
multiplication on $L^2$. Thus $E_n = \diag(m_nI_r,0,0,\ldots)$ is
scalar multiplication by $m_n\1_{[0,m_n^{-1})}$, a ``discretized delta
function at the origin''. Orthogonal projection from $\ell^2_n$ onto
$\F^n$ extends to a multiplication $R_n = \diag(I_r,\ldots,I_r,\break \diag
(1,\ldots,1,0,\ldots,0),0,\ldots)$, in which there are $\lceil
n/r\rceil$ nonzero blocks and a total of $n$ 1's.

Let $(Y_{n,i;j})_{j\in\Z_+}$, $i=1,2$ be two discrete-time $r\times
r$ matrix-valued random processes with
$Y_{n,1;j} \in M_r(\F)$ for all $j$. The processes may be embedded
into continuous time as above, by setting $Y_{n,i}(x) = Y_{n,i;\lfloor
m_n x\rfloor}$. Note also that $T_n$ and $\triangle_n =
m_n(1-T_n^\dag
)=-D_n^\dag$ may be sensibly applied to such matrix-valued functions.
The processes $Y_{n,i}$ are on- and off-diagonal integrated potentials,
and we define a ``potential operator'' by
%
%
\begin{equation}
\label{Vn} V_n = \diag(\triangle_n Y_{n,1})
+ \tfrac{1}{2} \bigl(\diag(\triangle_n Y_{n,2})T_n
+ T_n^\dag\diag\bigl(\triangle_n
Y_{n,2}^\dag\bigr) \bigr).
\end{equation}
Fix $W_n\in M_r(\F)$, a nonrandom ``boundary term''.

Finally, consider
%
%
\begin{equation}
\label{Hn} H_n = R_n \bigl(D_n^\dag
D_n + V_n + W_n E_n
\bigr)R_n.
\end{equation}
This operator leaves the initial coordinate subspace $\F^n$ invariant;
we shall also use $H_n$ to denote the \emph{matrix of its restriction
to} $\F^n$. The matrix $H_n\in M_n(\F)$ is self-adjoint and block
tridiagonal up to a truncation in the lower-right corner. Its main-
and super-diagonal processes are
%
%
\begin{eqnarray}\label{Hn12}
&& m_n^2+m_n(W_n +
Y_{n,1;0}),2m_n^2+m_n(Y_{n,1;1}-Y_{n,1;0}),
\nonumber
\\
&&\qquad  {}2m_n^2 +m_n(Y_{n,1;2}-Y_{n,1;1}),
\ldots
\\
&&\qquad{} -m_n^2 +\tfrac{1}{2}m_nY_{n,2;0},
-m_n^2+\tfrac
{1}{2}m_n(Y_{n,2;1}-Y_{n,2;0}),
\ldots,\nonumber
\end{eqnarray}
respectively; the sub-diagonal process is of course the conjugate
transpose of the super-diagonal process. (We could have absorbed $W_n$
into $Y_{n,1}$ as an additive constant, but keep it separate for
reasons that will soon be clear. Note also that the upper-left block
has $m_n^2$ rather than $2m_n^2$.) We refer to $H_n$ as a \textit{rank
$r$ block tri-diagonal ensemble}.

As in {RRV} and Part~I, convergence rests on a
few key assumptions on the potential and boundary terms just
introduced. By choice, no additional scaling will be required. The role
of the convergence in the first and third assumption below will be
clear as soon as we define the continuum limit. The growth and
oscillation bounds of the second assumption (and the lower bound
implied by the third) ensure tightness of the low-lying states; in
particular, they guarantee that the spectrum remains discrete and
bounded below in the limit.

\begin{ass}[(Tightness and convergence)]\label{ass1}
There exists a continuous $M_r(\F)$-valued random
process $\{Y(x)\}_{x\ge0}$ with $Y(0)=0$ such that
%
%
\begin{eqnarray}
\label{a1} \bigl\{Y_{n,i}(x)\bigr\}_{x\ge0},\qquad i=1,2
\mbox{ are tight in law,}
\nonumber\\[-8pt]\\[-8pt]\nonumber
Y_{n,1}+\tfrac{1}{2}\bigl(Y_{n,2}+Y_{n,2}^\dag
\bigr) \Rightarrow Y\qquad\mbox{in law}
\end{eqnarray}
with respect to the compact-uniform topology (defined using any matrix norm).
\end{ass}

\begin{ass}[(Growth and oscillation bounds)]\label{ass2}
There is a decomposition
%
%
\begin{equation}
\label{Ydec} Y_{n,i;j} = m_n^{-1}\sum
_{k=0}^{j}\eta_{n,i;k}+\omega_{n,i;j}
\end{equation}
(so $\triangle_n Y_{n,i} = \eta_{n,i} + \triangle_n\omega_{n,i}$)
with $\eta_{n,i;j}\ge0$ (as matrices), such that for some
deterministic scalar continuous nondecreasing unbounded functions $\ol
\eta(x)>0$, $\zeta(x)\ge1$ not depending on $n$, and random
constants $\kappa_n\ge1$ defined on the same probability spaces, the
following hold: the $\kappa_n$ are tight in distribution, and for each
$n$ we have almost surely
%
%
\begin{eqnarray}
\label{a21} \ol\eta(x)/\kappa_n-\kappa_n \le
\eta_{n,1}(x)+\eta_{n,2}(x) &\le&\kappa_n \bigl(1+
\ol\eta(x) \bigr),
\\
\label{a22} \eta_{n,2}(x) &\le&2m_n^2,
\\
\label{a23} \bigl\llvert\omega_{n,1}(\xi)-\omega_{n,1}(x)
\bigr\rrvert^2 +\bigl\llvert\omega_{n,2}(\xi)-
\omega_{n,2}(x)\bigr\rrvert^2 &\le&\kappa_n
\bigl(1+\ol\eta(x)/\zeta(x) \bigr)
\end{eqnarray}
for all $x,\xi\in[0,\lceil n/r\rceil/m_n)$ with $\llvert\xi
-x\rrvert\le1$. Here, matrix inequalities have their usual
meaning and
single bars denote the spectral [or $\ell^2(\F^r)$ operator] norm.
\end{ass}

\begin{ass}[(Critical or subcritical perturbation)]\label{ass3}
For some orthonormal basis $u_1,\ldots
,u_r$ of $\F^r$ and $-\infty<w_1\le\cdots\le w_r\le\infty$ we
have $W_n = \sum_{i=1}^r w_{n,i}u_i u_i^\dag$, where $w_{n,i}\in\R$
satisfy $\lim_{n\to\infty}w_{n,i}=w_i$ for each $i$.
\end{ass}

We write $r_0 = \#\{i:w_i<\infty\}\in\{0,\ldots,r\}$ for the
``critical rank''. Formally, $W_n\to W = \sum_{i=1}^r w_i u_i u_i^\dag
\in M_r^*(\F)$. It is natural to view $W$ as a parameter: that is, we
will consider the joint behaviour of the model (for given $Y_{n,i},
Y$) over all $W_n, W$ satisfying Assumption~\ref{ass3}.

\subsection{Reduction to deterministic setting}

In the next subsection, we will define a limiting object in terms of
$Y(x)$ and $W$; we want to prove that the discrete models converge to
this continuum limit in law. We reduce the problem to a deterministic
convergence statement as follows. First, select any subsequence. It
will be convenient to extract a further subsequence so that certain
additional tight sequences converge jointly in law; Skorokhod's
representation theorem [see \citet{EthierKurtz}] says this
convergence can be realized almost surely on a single probability
space. We may then proceed pathwise.

In detail, consider (\ref{a1})--(\ref{a23}). Note in particular that
nonnegativity of the $\eta_{n,i}$ and the upper bound of~(\ref{a21})
give that for $i=1,2$ the piecewise linear process $ \{\int_0^x\eta
_{n,i} \}_{x\ge0}$ is tight in distribution, pointwise
with respect to the spectral norm and in fact compact-uniformly. Given
a subsequence, we pass to a further subsequence so that the following
distributional limits exist jointly:
%
%
\begin{eqnarray}\label{ar}
Y_{n,i} &\Rightarrow&
Y_i,\nonumber
\\
{\int_0\eta_{n,i}} &\Rightarrow& \wt
\eta_i,
\\
\kappa_n &\Rightarrow&\kappa,\nonumber 
\end{eqnarray}
for $i=1,2$, where convergence in the first two lines is in the
compact-uniform topology. We realize~(\ref{ar})
pathwise a.s. on some probability space and continue in this
deterministic setting.

We can take (\ref{a21})--(\ref{a23}) to hold with $\kappa_n$
replaced with a single $\kappa$.
Observe that~(\ref{a21}) gives a local Lipschitz bound on the $\int
\eta_{n,i}$, which is inherited by their limits $\wt\eta_i$ (the
spectral norm controls the matrix entries). Thus, $\eta_i = \wt\eta
_i{}'$ is defined almost everywhere on $\R_+$, satisfies~(\ref{a21}),
and may be defined to satisfy this inequality everywhere. Furthermore,
one easily checks that $m_n^{-1}\sum\eta_{n,i}\to\int\eta_i$
compact-uniformly as well (use continuity of the limit). Therefore,
$\omega_{n,i} = y_{n,i}-m_n^{-1}\sum\eta_{n,i}$ must have a
continuous limit $\omega_i$ for $i = 1,2$; moreover, the bound~(\ref
{a23}) is inherited by the limits. Lastly, put $\eta= \eta_1+\eta_2$,
$\omega= \omega_1+ \frac{1}{2}(\omega_2+\omega_2^\dag)$ and note that
$Y_i = \int\eta_i + \omega_i$ and $Y = \int\eta+ \omega$. For
convenience, we record the bounds inherited by $\eta, \omega$:
%
%
\begin{eqnarray}
\label{ac1} \ol\eta(x)/\kappa-\kappa\le\eta(x) &\le&\kappa\bigl(1+\ol
\eta(x) \bigr),
\\
\label{ac2} \bigl\llvert\omega(\xi)-\omega(x)\bigr\rrvert^2 &\le&
\kappa\bigl(1+\ol\eta(x)/\zeta(x) \bigr) 
\end{eqnarray}
for $x,\xi\in\R_+$ with $\llvert\xi-x\rrvert\le1$ (and
note that $\kappa\ge1$).

We will assume this \textit{subsequential pathwise coupling} for the
remainder of the section.

\subsection{Limiting object and variational characterization}

Formally, the limiting object is the eigenvalue problem
%
%
\begin{eqnarray}
\label{e_prob} \Hc f &=& \Lambda f\qquad\mbox{on }L^2\bigl(\R_+,
\F^r\bigr),
\nonumber\\[-8pt]\\[-8pt]\nonumber
f'(0) = Wf(0),
\end{eqnarray}
where
\[
\Hc= -\frac{d^2}{dx^2} + Y'(x).
\]
Writing the spectral decomposition $W =\sum_{i=1}^{r} w_i u_i u_i^\dag
$, recall (Assumption~\ref{ass3}) that we actually allow $w_i\in\R$ for $1\le
i\le r_0$ and, symbolically, $w_i = +\infty$ for $r_0+1\le i\le r$.
Writing $f_i = u_i^\dag f$, the boundary condition is then to be
interpreted as
%
%
\begin{equation}
\label{Wrig} %
\begin{aligned} f'_i(0) &=&
w_i f_i(0)\qquad\mbox{for }i\le r_0,
\\
f_i(0) &=& 0 \qquad\mbox{for }i>r_0.
\end{aligned}
\end{equation}
We thus have a completely general homogeneous linear self-adjoint
boundary condition. We refer to $\Span\{u_i:i>r_0\}$ as the \textit
{Dirichlet subspace} and the corresponding $f_i$ as \textit{Dirichlet
components}; they will require special treatment in what follows.

We will actually work with a symmetric bilinear form (properly,
sesquilinear if $\F= \C$ or $\HH$) associated with the eigenvalue
problem~(\ref{e_prob}). Define a space of test functions $C_0^\infty$
consisting of smooth $\F^r$-valued functions $\ph$ on $\R_+$ with
compact support; we additionally require the Dirichlet components to be
supported away from the origin. Introduce a symmetric bilinear form on
$C_0^\infty\times C_0^\infty$ by
%
%
\begin{equation}
\label{Hc} \Hc(\ph,\psi) = \bigl\langle\ph',\psi'
\bigr\rangle- \bigl\langle\ph',Y\psi\bigr\rangle- \bigl\langle
\ph,Y\psi' \bigr\rangle+ \ph(0)^\dag W\psi(0),
\end{equation}
where the Dirichlet part of the last term is interpreted as zero.
Formally, the form $\Hc(\cdot,\cdot)$ is just the usual one $
\langle\cdot,\Hc\cdot\rangle$ associated with the operator
$\Hc$; the potential term has been integrated by parts and the
boundary condition ``built in''. See also Remark~\ref{r.ee} below.

The regularity and decay conditions naturally associated with this form
are given by the following weighted Sobolev norm:
%
%
\begin{equation}
\label{Lstar} \llVert f\rrVert_*^2 = \int_0^\infty
\bigl(\bigl\llvert f'\bigr\rrvert^2 + (1+\ol\eta)
\llvert f\rrvert^2 \bigr) + f(0)^\dag W^+ f(0),
\end{equation}
where the \textit{positive part of} $W$ is defined as $W^+ = \sum
_{i=1}^{r} w_i^+ u_i u_i^\dag$ with $w^+ = w\vee0$. [Define the
\textit
{negative part} similarly with $w_i^- = -(w\wedge0)$, so that $W = W^+
- W^-$.] We refer to $\llVert\cdot\rrVert_*$ as the $L^*$
\textit{norm} and define an associated Hilbert space $L^*$ as the
closure of $C_0^\infty$ under this norm. (The formal Dirichlet terms
are again interpreted to be zero, but they can also be thought of as
imposing the Dirichlet condition.) We record some basic facts about $L^*$.

%
\begin{fact}\label{f.1}
Any $f\in L^*$ is uniformly $\mbox{H\"older}(1/2)$-continuous and satisfies
$\llvert f(x)\rrvert^2\le2\llVert f'\rrVert\llVert f\rrVert$ $\le
\llVert f\rrVert_*^2$ for all $x$;
furthermore, $f_i(0) = 0$ for $i>r_0$.
\end{fact}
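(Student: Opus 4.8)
The plan is to prove the three assertions of Fact~\ref{f.1}---the pointwise bound $\abs{f(x)}^2\le 2\norm{f'}\norm{f}$, the consequent H\"older(1/2) continuity, and the vanishing of the Dirichlet components at the origin---first for $\ph\in C_0^\infty$ and then extend to all of $L^*$ by density, since each inequality passes to limits in the $L^*$ norm. For the core pointwise estimate, the key step is the elementary identity
\[
\abs{\ph(x)}^2 = -\int_x^\infty \frac{d}{dt}\abs{\ph(t)}^2\,dt = -\int_x^\infty 2\,\Re\,\ph(t)\dg\ph'(t)\,dt,
\]
valid because $\ph$ has compact support. Applying Cauchy--Schwarz to the right-hand side gives $\abs{\ph(x)}^2\le 2\int_x^\infty\abs{\ph}\abs{\ph'}\le 2\norm{\ph}\norm{\ph'}$, uniformly in $x$. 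The bound $2\norm{f'}\norm{f}\le\norm{f}_*^2$ is then just AM-GM together with the definition~\eqref{Lstar} of $\norm{\cdot}_*^2$ (dropping the nonnegative boundary term $f(0)\dg W^+ f(0)$ and using $\int\abs{f}^2\le\int(1+\ol\eta)\abs{f}^2$ since $\ol\eta>0$).

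For the H\"older continuity, I would again work on $C_0^\infty$ and write, for $x<\xi$,
\[
\abs{\ph(\xi)-\ph(x)} = \Abs{\int_x^\xi \ph'(t)\,dt}\le \abs{\xi-x}^{1/2}\Bigl(\int_x^\xi\abs{\ph'}^2\Bigr)^{1/2}\le \abs{\xi-x}^{1/2}\norm{\ph'},
\]
by Cauchy--Schwarz, which is a uniform H\"older(1/2) bound with constant $\norm{\ph'}\le\norm{\ph}_*$; this is inherited by $L^*$ limits because $L^*$ convergence controls $\norm{\cdot}_*$ and hence gives uniform convergence of the functions (itself a consequence of the pointwise bound applied to differences). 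The vanishing $f_i(0)=0$ for $i>r_0$ holds by construction for $\ph\in C_0^\infty$, where the Dirichlet components are required to be supported away from the origin; since $L^*$-convergence implies uniform convergence (again via the pointwise bound on differences), the value $f_i(0)$ is a continuous functional and the limit still satisfies $f_i(0)=0$.

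The only mild subtlety---the ``main obstacle,'' though it is routine---is making sure the density argument is airtight: one must check that an $L^*$-Cauchy sequence of test functions converges uniformly on $\R_+$ (so the pointwise and boundary assertions make sense for the limit) and that the weighted $L^2$ part of the norm is preserved well enough to conclude the final inequality. Both follow from the pointwise estimate $\abs{g(x)}^2\le\norm{g}_*^2$ applied to differences $g = \ph_m-\ph_n$, which upgrades $L^*$-Cauchy to uniformly-Cauchy; the weighted $L^2$ and $\norm{\cdot}'$ pieces converge by definition of the norm. Everything else is direct manipulation with no real difficulty.
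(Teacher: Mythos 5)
Your proof is correct and follows essentially the same route as the paper: establish $\abs{\ph(x)}^2=-\int_x^\infty 2\Re\,\ph\dg\ph'\le 2\norm{\ph}\norm{\ph'}$ and the H\"older bound on $C_0^\infty$ via the fundamental theorem of calculus and Cauchy--Schwarz, then pass to $L^*$ by density, with the Dirichlet values surviving because the convergence is uniform. The only (harmless) difference is that you upgrade the defining $L^*$-Cauchy sequence to a uniformly Cauchy one by applying the pointwise bound to differences, whereas the paper extracts a compact-uniformly convergent subsequence from an $L^*$-bounded sequence; both rest on the same estimate.
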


\begin{pf}
We have $\llvert f(y)-f(x)\rrvert= \llvert\int_x^y
f'\rrvert\le\llVert f'\rrVert\llvert y-x\rrvert^{1/2}$. For $f\in
C_0^\infty$ we have $\llvert f(x)\rrvert^2 = -\int_x^\infty2\Re
f^\dag f' \le2\llVert f\rrVert
\llVert f'\rrVert\le\llVert f\rrVert_*^2$; an
$L^*$-bounded sequence in $C_0^\infty$, therefore, has a
compact-uniformly convergent subsequence, so we can extend this bound
to $f\in L^*$ and also conclude the behaviour in the Dirichlet components.
\end{pf}

%
\begin{fact}\label{f.2} Every $L^*$-bounded sequence has a subsequence
converging in the following modes: \textup{(i)} weakly in $L^*$, \textup{(ii)}
derivatives weakly in $L^2$, \textup{(iii)} uniformly on compacts and \textup{(iv)} in $L^2$.
\end{fact}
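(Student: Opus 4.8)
The plan is to run the standard weak-compactness argument in a Hilbert space, then bootstrap to the stronger modes using the a priori bounds already established. First I would observe that $L^*$ is a Hilbert space, so any norm-bounded sequence $(f_n)$ has a subsequence converging weakly in $L^*$; call the limit $f$. This immediately gives (i). Since the map $g\mapsto g'$ is bounded from $L^*$ into $L^2$, weak convergence in $L^*$ forces $f_n'\rightharpoonup f'$ weakly in $L^2$, giving (ii). (One should check that the weak $L^2$-limit of the derivatives really is the derivative of the weak $L^*$-limit; this follows because for a fixed test function $\ph\in C_0^\infty$ the pairing $f\mapsto \ip{\ph',f}$ and $f\mapsto\ip{\ph,f'} $ are both $L^*$-continuous and related by integration by parts, so the identity passes to the limit.)

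Next I would upgrade to (iii). By Fact~\ref{f.1} the sequence $(f_n)$ is uniformly bounded (by $\sup_n\norm{f_n}_*$) and uniformly H\"older$(1/2)$-continuous (with constant $\sup_n\norm{f_n'}_{L^2}\le\sup_n\norm{f_n}_*$), hence equicontinuous on every compact subinterval of $\R_+$; Arzel\`a--Ascoli plus a diagonal argument over an exhaustion of $\R_+$ by compacts yields a further subsequence converging uniformly on compacts, necessarily to the same limit $f$ (pointwise limits are determined by weak $L^*$ convergence against delta-like approximations, or more simply: the uniform-on-compacts limit is a continuous function whose pairing against any $\ph\in C_0^\infty$ agrees with that of $f$). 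This gives (iii).

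Finally, for (iv) I would use the coercive weight $\ol\eta$. Fix $\e>0$. Since $\ol\eta$ is continuous, nondecreasing and unbounded, choose $R$ so large that $1+\ol\eta(x)\ge \e^{-1}$ for $x\ge R$; then
\[
\int_R^\infty \abs{f_n-f_m}^2 \le \e\int_R^\infty (1+\ol\eta)\abs{f_n-f_m}^2\le \e\bigl(\norm{f_n}_*+\norm{f_m}_*\bigr)^2,
\]
so the tails are uniformly small. On the compact interval $[0,R]$ the sequence converges uniformly by (iii), hence in $L^2([0,R])$. Combining the two estimates shows $(f_n)$ is Cauchy in $L^2(\R_+)$, so it converges in $L^2$; the limit must again be $f$. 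This gives (iv) and completes the proof.

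The only mildly delicate point—the main obstacle, such as it is—is verifying that all four modes converge to the \emph{same} object $f$ and that this $f$ lies in $L^*$ with the expected derivative; this is handled by the identification-of-limits remarks above, using that weak $L^*$ convergence is tested against $C_0^\infty$, which is dense. Everything else is a routine application of Arzel\`a--Ascoli and the tail bound supplied by Assumption 2.
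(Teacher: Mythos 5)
Your proposal is correct and follows essentially the same route as the paper: weak compactness (Banach--Alaoglu) for (i)--(ii), Fact~\ref{f.1} plus Arzel\`a--Ascoli for (iii), and the unboundedness of $\ol\eta$ to control the $L^2$ tails for (iv), with the same identification-of-limits bookkeeping (the paper identifies the weak limit of the derivatives by integrating against $\1_{[0,x]}$, you use test functions — an immaterial difference).
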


\begin{pf} (i) and (ii) are just Banach--Alaoglu; (iii) is the
previous fact and Arzel\`a--Ascoli again; (iii) implies $L^2$
convergence locally, while the uniform bound on $\int\ol\eta\llvert
f_n\rrvert^2$ produces the uniform integrability required
for (iv). Note that the weak limit in (ii) really is the derivative of
the limit function, as one can see by integrating against functions $\1
_{[0,x]}$ and using pointwise convergence.
\end{pf}

By the bound in Fact~\ref{f.1} with $x=0$, the boundary term in (\ref
{Lstar}) could be done away with. It is natural to include the term,
however, when considering all $W$ simultaneously and viewing the
Dirichlet case as a limiting case. More importantly, it clarifies the
role of the boundary terms in the following key bound.

%
\begin{lemma}\label{l.cbound} For every $0<c<1/\kappa$ there is a
$C>0$ such that, for each $b> 0$, the following holds for all $W\ge-b$
and all $f\in C_0^\infty$:
%
%
\begin{equation}
\label{cbound} c\llVert f\rrVert_*^2 - \bigl(1+b^2
\bigr)C\llVert f\rrVert^2 \le\Hc(f,f) \le C\llVert f\rrVert
_*^2.
\end{equation}
In particular, $\Hc(\cdot,\cdot)$ extends uniquely to a continuous
symmetric bilinear form on $L^*\times L^*$.
\end{lemma}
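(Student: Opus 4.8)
The plan is to establish the two inequalities in \eqref{cbound} separately, with the lower bound being the substantive one. The key analytic tool is the decomposition $Y = \int\eta + \omega$ from the subsequential pathwise coupling, which splits the potential into a nonnegative ``growing'' part that can be absorbed and an ``oscillating'' part controlled by \eqref{ac2}. Throughout I would work with $f\in C_0^\infty$, where integration by parts is unproblematic, and obtain bounds with constants depending only on $c$ (resp.\ nothing) so that the final extension to $L^*\times L^*$ follows by density and the polarization identity.

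For the \emph{upper bound}, I would estimate each term of \eqref{Hc} directly. The kinetic term $\ip{f',f'}=\norm{f'}^2$ is already part of $\norm{f}_*^2$. The boundary term $f(0)\dg Wf(0)\le f(0)\dg W^+f(0)\le\norm{f}_*^2$, the last step by Fact~\ref{f.1} applied to the $W^+$-weighted piece (or trivially if one keeps it as written). For the potential cross-terms $-\ip{f',Yf}-\ip{f,Yf'}$, I would integrate by parts once to move a derivative off one factor; more efficiently, I would note this equals $-\frac{d}{dx}$ applied appropriately and rewrite it, then bound $\abs{Y(x)}\le C(1+\ol\eta(x))$, which holds because $\int_0^x\eta\le \kappa\int_0^x(1+\ol\eta)\le C(1+\ol\eta(x))$ (using that $\ol\eta$ is nondecreasing and unbounded, so $x\le C(1+\ol\eta(x))$ up to adjusting $C$) and $\omega$ is bounded on the (compact) support after using \eqref{ac2} with $x=0$. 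Cauchy--Schwarz then gives $\abs{\ip{f',Yf}}\le\norm{f'}\,\bigl(\int(1+\ol\eta)\abs{f}^2\bigr)^{1/2}\cdot C\le C\norm{f}_*^2$, and similarly for the other cross-term after an integration by parts to restore the $\abs{f'}$ factor. Collecting terms yields $\Hc(f,f)\le C\norm{f}_*^2$.

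For the \emph{lower bound}, the mechanism is: write $-\ip{f',Yf}-\ip{f,Yf'}$ using $Y=\int\eta+\omega$; the $\int\eta$ part, after an integration by parts, produces a $+\int \eta\,\abs{f}^2 = +\ip{f,\eta f}$ term (up to boundary contributions at $0$ which combine with $f(0)\dg Wf(0)$) plus lower-order terms, while the $\omega$ part is handled as an oscillation term \`a la RRV. Concretely, $-\ip{f',\omega f}-\ip{f,\omega f'} = -\int \frac{d}{dx}\bigl(f\dg\omega f\bigr) + \int f\dg \omega' f$ — but $\omega$ is only continuous, so one instead uses the standard RRV trick: integrate by parts to write this as $\int (\omega(x)-\omega(T))\cdot(\text{derivative terms})$ over dyadic-type blocks, and apply \eqref{ac2} to get, for any $\delta>0$, a bound $\delta\int\abs{f'}^2 + C_\delta\int(1+\ol\eta/\zeta)\abs{f}^2$; since $\zeta(x)\to\infty$, the term $\int(\ol\eta/\zeta)\abs{f}^2$ is dominated by $\epsilon\int\ol\eta\abs{f}^2$ outside a compact set plus $C\norm{f}^2$. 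Putting it together, $\Hc(f,f)\ge (1-\delta)\norm{f'}^2 + \ip{f,\eta f} - \epsilon\int\ol\eta\abs{f}^2 - C\norm{f}^2 + f(0)\dg Wf(0) + (\text{boundary terms from }\int\eta\text{ at }0)$. Using the lower bound $\eta(x)\ge\ol\eta(x)/\kappa-\kappa$ from \eqref{ac1}, choosing $\delta,\epsilon$ small relative to $1/\kappa$, and using $W\ge -b$ together with Fact~\ref{f.1} to absorb $f(0)\dg Wf(0)\ge -b\abs{f(0)}^2 \ge -b(2\norm{f'}\norm{f}) \ge -\delta'\norm{f'}^2 - (b^2/\delta')\norm{f}^2$, we arrive at $\Hc(f,f)\ge c\norm{f}_*^2 - (1+b^2)C\norm{f}^2$ for any prescribed $c<1/\kappa$, after tracking how the $\epsilon,\delta$ choices force $C$.

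The main obstacle I anticipate is the careful bookkeeping of the boundary contributions at $x=0$ generated when integrating by parts the $\ip{f',(\int\eta)f}$ term against the explicit boundary term $f(0)\dg Wf(0)$ in \eqref{Hc}: one must check that these combine correctly (with the Dirichlet components causing no trouble since they vanish at $0$ by Fact~\ref{f.1} and the definition of $C_0^\infty$), and that the negative part $W^-$ is exactly what is controlled by the $-(1+b^2)C\norm{f}^2$ slack via the $W\ge -b$ hypothesis — this is the reason the lemma is stated with an explicit $b$-dependence and is the delicate point in making the bound uniform over all admissible $W$. The secondary technical nuisance is the matrix-valued setting: all the scalar RRV oscillation estimates must be re-derived with $\abs{\cdot}$ the spectral norm and with $\eta(x)\ge 0$ as a matrix inequality, so that $\ip{f,\eta f}\ge 0$ and the lower bound \eqref{ac1} applies entrywise through the spectral-norm comparison; this is routine but must be stated.
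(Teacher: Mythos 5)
Your lower-bound argument is essentially the paper's: decompose $Y=\int\eta+\omega$, integrate the $\int\eta$ part by parts to produce $\ip{f,\eta f}$ (controlled two-sidedly via \eqref{ac1}), handle the $\omega$ part by an RRV-style oscillation estimate based on \eqref{ac2} (the paper does this with the moving average $\ol\omega_x=\int_x^{x+1}\omega$, writing $-2\Re\ip{f',\omega f}=\ip{f,\ol\omega'f}+2\Re\ip{f',(\ol\omega-\omega)f}$), and absorb the boundary term using $\abs{f(0)}^2\le 2\norm{f'}\norm{f}$ together with $W^-\le b$, then polarize to extend to $L^*\times L^*$. That part matches the paper and is sound.

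The upper bound as you wrote it, however, has a genuine gap. The pointwise bound $\abs{Y(x)}\le C\bigl(1+\ol\eta(x)\bigr)$ is false in general: the integrated part satisfies only $\abs{\int_0^x\eta}\le\kappa\int_0^x(1+\ol\eta)$, which grows like $x\bigl(1+\ol\eta(x)\bigr)$ (in the concrete models $\ol\eta(x)=rx$, so $\int_0^x\eta\sim rx^2/2$, not $O(1+x)$), and your auxiliary claim $x\le C\bigl(1+\ol\eta(x)\bigr)$ also fails when $\ol\eta$ grows sublinearly. Bounding $\omega$ ``on the compact support of $f$'' makes the constant depend on $f$, which is not permitted: $C$ must be uniform over all $f\in C_0^\infty$, or the density/polarization extension to $L^*\times L^*$ collapses. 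Finally, even granting $\abs{Y}\le C(1+\ol\eta)$, Cauchy--Schwarz yields $\abs{\ip{f',Yf}}\le\norm{f'}\bigl(\int\abs{Y}^2\abs{f}^2\bigr)^{1/2}\le C\norm{f'}\bigl(\int(1+\ol\eta)^2\abs{f}^2\bigr)^{1/2}$, and $\int(1+\ol\eta)^2\abs{f}^2$ is not controlled by $\norm{f}_*^2$, which carries only one power of $1+\ol\eta$. The repair is simply to run the upper bound through the same decomposition you already use for the lower bound: $\norm{f'}^2+\ip{f,\eta f}\le\kappa\norm{f}_*^2$ by \eqref{ac1}, the $\omega$-terms contribute at most $(\e+\sqrt{\e})\norm{f}_*^2+C_\e\norm{f}^2\le C\norm{f}_*^2$ by your own oscillation estimate, and the boundary term is at most $f(0)\dg W^+f(0)\le\norm{f}_*^2$ --- which is exactly how the paper obtains both inequalities simultaneously.
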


\begin{pf}
For the first three terms of~(\ref{Hc}), we use the decomposition
$Y=\int\eta+\omega$ from the previous subsection. Integrating the
$\int\eta$ term by parts, (\ref{ac1}) easily yields
\[
\frac{1}{\kappa}\llVert f\rrVert_*^2 - \kappa\llVert f\rrVert
^2 \le\bigl\llVert f'\bigr\rrVert^2 +
\langle f,\eta f \rangle\le\kappa\llVert f\rrVert_*^2.
\]
Break up the $\omega$ term as follows. The moving average $\ol\omega
_x = \int_x^{x+1}\omega$ is differentiable with $\ol\omega_x' =
\omega_{x+1}-\omega_{x}$; writing $\omega= \ol\omega+ (\omega-
\ol\omega)$, we have
\[
-2\Re\bigl\langle f',\omega f \bigr\rangle= \bigl\langle f,\ol
\omega'f \bigr\rangle+ 2\Re\bigl\langle f',(\ol
\omega-\omega)f \bigr\rangle.
\]
By~(\ref{ac2}), $\max(\llvert\omega_{\xi}-\omega
_{x}\rrvert,\llvert\omega_{\xi}-\omega_x\rrvert
^2 )\le C_\varepsilon+\varepsilon\ol\eta(x)$ for $\llvert
\xi-x\rrvert\le1$, where $\varepsilon$ can be made small. In
particular, the first term above is bounded absolutely by $\varepsilon
\llVert f\rrVert_*^2+C_\varepsilon\llVert f\rrVert
^2$. Averaging, we also get $\llvert\ol\omega_x - \omega
_x\rrvert\le(C_\varepsilon+\varepsilon\ol\eta(x))^{1/2}$;\vspace*{1pt}
Cauchy--Schwarz then bounds the second term absolutely by $\sqrt
{\varepsilon}\int_0^\infty\llvert f'\rrvert^2+\frac
{1}{\sqrt{\varepsilon}}\int_0^\infty{(C_\varepsilon+\varepsilon
\ol\eta)\llvert f\rrvert^2}$ and thus by $\sqrt
{\varepsilon}\llVert f\rrVert_*^2+C_\varepsilon'\llVert
f\rrVert^2$. Now combine all the terms and set $\varepsilon$
small to obtain a version of~(\ref{cbound}) with the boundary terms
omitted (from both the form and the norm).

We break the boundary term in~(\ref{Hc}) into its positive and
negative parts. For the negative part, Fact~\ref{f.2} gives $\llvert
f(0)\rrvert^2 \le(\varepsilon/b)\llVert f'\rrVert^2+ (b/\varepsilon
)\llVert f\rrVert^2$; $W^-\le b$
then implies that
\[
0 \le f(0)^\dag W^- f(0) \le\varepsilon\llVert f\rrVert
_*^2+C''_\varepsilon b^2
\llVert f\rrVert^2,
\]
which may be subtracted from the inequality already obtained. For the
positive part $f(0)^\dag W^+ f(0)$, use the fact that $c\le1\le C$ to
simply add it in. We thus arrive at~(\ref{cbound}).

For the $L^*$ bilinear form bound, begin with the quadratic form bound
$\llvert\Hc(f,f)\rrvert\le C_{c,b}\llVert f\rrVert
^2_*$; it is a standard Hilbert space fact that it may be polarized to
a bilinear form bound [see, e.g., Section~18 of \citet{Halmos}].
\end{pf}

%
\begin{definition}\label{d.ee} We say $f\in L^*$ is an \textit
{eigenfunction} with \textit{eigenvalue} $\Lambda$ if $f\neq0$ and
for all $\ph\in C_0^\infty$ we have
%
%
\begin{equation}
\label{ee} \Hc(\ph,f) = \Lambda\langle\ph,f \rangle.
\end{equation}
Note that~(\ref{ee}) then automatically holds for all $\ph\in L^*$,
by $L^*$-continuity of both sides.
\end{definition}

%
\begin{remark}\label{r.ee} This definition represents a weak or
distributional version of the problem~(\ref{e_prob}). As further
justification, integrate by parts to write the definition
\[
\bigl\langle\ph',f' \bigr\rangle- \bigl\langle
\ph',Yf \bigr\rangle- \bigl\langle\ph,Yf' \bigr
\rangle+ \ph(0)^\dag Wf(0) = \Lambda\langle\ph,f \rangle
\]
in the form
\[
\bigl\langle\ph',f' \bigr\rangle- \bigl\langle
\ph',Yf \bigr\rangle+ \biggl\langle\ph',{\int
_0 Yf'} \biggr\rangle- \bigl\langle
\ph',Wf(0) \bigr\rangle= -\Lambda\biggl\langle\ph',{
\int_0 f} \biggr\rangle,
\]
which is equivalent to
%
%
\begin{equation}
\label{integrated} f'(x) = Wf(0) + Y(x)f(x) - \int_0^x
Yf' - \Lambda\int_0^x f\qquad
\mbox{a.e. }x.
\end{equation}
(For a Dirichlet component $f_i$ the restriction on test functions
implies that $ \langle\ph'_i,1 \rangle= 0$, so the first
boundary term on the right-hand side is replaced with an arbitrary
constant.) Now (\ref{integrated}) shows that $f'$ has a continuous
version, and the equation may be taken to hold everywhere. In
particular, $f$ satisfies the boundary condition of~(\ref{e_prob})
classically. [For a Dirichlet component, we just find that the
arbitrary constant is $f_i'(0)$.] One can also view (\ref{integrated})
as a straightforward integrated version of the eigenvalue equation in
which the potential term has been interpreted via integration by parts.
This equation will be useful in Lemma~\ref{l.one} below and is the
starting point for the development in Section~\ref{s.alternative}.
\end{remark}

We now characterize the eigenvalues and eigenfunctions variationally.
As usual, it follows from the symmetry of the form that eigenvalues are
real (and eigenfunctions with distinct eigenvalues are
$L^2$-orthogonal). The $L^2$ part of the lower bound in (\ref{cbound})
says the spectrum is bounded below. The rest of (\ref{cbound}) implies
that there are only finitely many eigenvalues below any given level: a
sequence of normalized eigenfunctions with bounded eigenvalues must
have an $L^2$-convergent subsequence by Fact \ref{f.2}. At a given
level, more is true.

%
\begin{lemma}\label{l.one} For each $\Lambda\in\R$, the
corresponding eigenspace is at most $r$-dimensional.
\end{lemma}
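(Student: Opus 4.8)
The plan is to argue that any eigenfunction $f$ with eigenvalue $\Lambda$ is determined by the pair $\bigl(f(0), f'(0)\bigr)\in\F^r\times\F^r$, subject to the constraint imposed by the boundary condition, so that the eigenspace injects into an at most $r$-dimensional space of admissible initial data. The starting point is the integrated equation \eqref{integrated} from Remark~\ref{r.ee}, which holds everywhere once we use the continuous version of $f'$:
\[
f'(x) = Wf(0) + Y(x)f(x) - \int_0^x Yf' - \Lambda\int_0^x f.
\]
This is a linear (Volterra-type) integral equation for the pair $(f,f')$ on $L^2_{\loc}$ with continuous coefficient $Y$; together with $f(x) = f(0) + \int_0^x f'$, it is a closed first-order system. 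The key point is uniqueness: given the value $\bigl(f(0),f'(0)\bigr)$ the system has at most one continuous solution on any interval $[0,L]$, by the usual Gr\"onwall / successive-approximations argument for Volterra equations with locally bounded kernel (here $Y$ is continuous hence bounded on compacts). Hence the linear map $f\mapsto\bigl(f(0),f'(0)\bigr)$ from the $\Lambda$-eigenspace into $\F^r\times\F^r$ is injective.

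Next I would cut this down from $2r$ to $r$ using the boundary condition. On the non-Dirichlet part (indices $i\le r_0$), \eqref{Wrig} gives $f_i'(0) = w_i f_i(0)$, so once $f(0)$ is known in those components $f'(0)$ is determined there; on the Dirichlet part (indices $i>r_0$) Fact~\ref{f.1} gives $f_i(0)=0$, so there the relevant datum is $f_i'(0)$, with $f_i(0)$ forced to vanish. In either coordinate block exactly one of the two scalar data is free and the other is determined, so the admissible initial data $\bigl(f(0),f'(0)\bigr)$ range over an $r$-dimensional subspace of $\F^r\times\F^r$ (namely $\{(v_i)\colon v_i \text{ free for } i\le r_0\}$ paired with $\{(w_i v_i)\}_{i\le r_0}\oplus\{(v_i')\}_{i>r_0}$). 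Composing with the injection above shows $\dim\le r$.

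I expect the main obstacle to be the uniqueness statement at the level of regularity available, i.e.\ making the Volterra/Gr\"onwall argument rigorous for the integrated equation rather than a classical ODE. One must treat $f$ and $g:=f'$ as the unknowns with $f(x) = f(0)+\int_0^x g$ and $g(x) = $ (affine in $f(0)$ and $f'(0)$) $+\,Y(x)f(x) - \int_0^x Yg - \Lambda\int_0^x f$, observe that the right-hand side is an affine contraction of $(f,g)$ in the sup norm on a short enough interval (using only $\sup_{[0,L]}\abs{Y}<\infty$ from continuity of $Y$ established in Remark~\ref{r.ee}), and then iterate over consecutive short intervals to reach any $L$; the difference of two solutions with the same initial data satisfies the homogeneous version and is therefore $0$. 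The Dirichlet-component bookkeeping (that \eqref{integrated} there involves an arbitrary constant which is exactly $f_i'(0)$, as noted in Remark~\ref{r.ee}) needs to be carried through the same argument, but it does not change the count.
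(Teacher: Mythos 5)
Your proposal is correct and follows essentially the same route as the paper: the eigenspace injects into the $r$-dimensional space of initial data $\bigl(f(0),f'(0)\bigr)$ permitted by the boundary condition, and uniqueness for the integrated equation \eqref{integrated} with vanishing data at the origin is established by a Volterra/Gronwall argument. The paper's only real difference is cosmetic---it integrates by parts once more to get a closed inequality $\abs{f'(x)}\le C(x)\int_0^x\abs{f'}$ and applies Gronwall to $f'$ alone, which incidentally sidesteps the small wrinkle you flagged, namely that the pointwise term $Y(x)f(x)$ keeps your map from being a plain sup-norm contraction on short intervals (fixable by a weighted norm, by iterating the map, or by substituting $f=f(0)+\int_0^x f'$ as the paper effectively does).
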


\begin{pf}
By linearity, it suffices to show a solution of~(\ref{integrated})
with $f'(0)=f(0)=0$ must vanish identically. Integrate by parts to write
\[
f'(x) = Y(x) \int_0^x
f' - \int_0^x Yf' -
\Lambda x \int_0^x f' +\Lambda
\int_0^x tf'(t)\,dt,
\]
which implies that $\llvert f'(x)\rrvert\le C(x)\int_0^x\llvert
f'\rrvert$ with some $C(x)<\infty$ increasing in
$x$. Gronwall's lemma then gives $\llvert f'(x)\rrvert= 0$
for all $x\ge0$.
\end{pf}

%
\begin{proposition}\label{p.var} There is a well-defined $(k+1)$st
lowest eigen\-value $\Lambda_k$, counting with multiplicity. The
eigenvalues $\Lambda_0\le\Lambda_1\le\ldots$ together with an
orthonormal sequence of corresponding eigenvectors $f_0,f_1,\ldots$
are given recursively by the variational problem
\[
\Lambda_k = \mathop{\inf_{f\in L^*, \llVert f\rrVert
=1,}}_{{f\perp f_0,\ldots, f_{k-1}}}
\Hc(f,f)
\]
in which the minimum is attained and we set $f_k$ to be any minimizer.
\end{proposition}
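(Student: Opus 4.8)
The plan is to run the standard recursive variational argument, extracting eigenvalues one at a time via a direct method in the Hilbert space $L^*$. First I would observe that by Lemma~\ref{l.cbound}, for $W\ge -b$ the form $\Hc(\cdot,\cdot)+\big((1+b^2)C+1\big)\ip{\cdot,\cdot}$ is equivalent to the $\norm{\cdot}_*^2$ inner product on $L^*$, so the infimum $\Lambda_0=\inf\{\Hc(f,f):f\in L^*,\norm f=1\}$ is finite (bounded below by $-(1+b^2)C$). To show it is attained: take a minimizing sequence $f_n$; since $\Hc(f_n,f_n)$ is bounded and $\norm{f_n}=1$, the lower bound in \eqref{cbound} forces $\norm{f_n}_*$ to be bounded. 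By Fact~\ref{f.2} pass to a subsequence converging weakly in $L^*$ and strongly in $L^2$ to some $f_0$; strong $L^2$ convergence gives $\norm{f_0}=1$. The remaining point is lower semicontinuity of $\Hc(\cdot,\cdot)$ along this subsequence. I would write $\Hc(f,f)=Q_*(f,f)-\big((1+b^2)C+1\big)\norm f^2$ where $Q_*$ is the equivalent inner product; $Q_*(\cdot,\cdot)$ is weakly lower semicontinuous (a norm is), and $\norm{f_n}^2\to\norm{f_0}^2$, so $\Hc(f_0,f_0)\le\liminf\Hc(f_n,f_n)=\Lambda_0$, hence equality and $f_0$ is a minimizer.

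Next I would verify that a minimizer $f_0$ is in fact an eigenfunction with eigenvalue $\Lambda_0$, by the usual Lagrange/first-variation computation: for any $\ph\in C_0^\infty$ and $t\in\R$, the function $g(t)=\Hc(f_0+t\ph,f_0+t\ph)-\Lambda_0\norm{f_0+t\ph}^2$ has a minimum (value $0$) at $t=0$ since $\Hc(f,f)\ge\Lambda_0\norm f^2$ for all $f\in L^*$ by homogeneity; differentiating at $t=0$ (the form is bilinear so this is elementary) gives $\Re\big(\Hc(\ph,f_0)-\Lambda_0\ip{\ph,f_0}\big)=0$, and replacing $\ph$ by $i\ph$, $j\ph$, $k\ph$ as appropriate kills the other components, yielding \eqref{ee}. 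Then I would set up the recursion: having produced orthonormal eigenfunctions $f_0,\ldots,f_{k-1}$ with eigenvalues $\Lambda_0\le\cdots\le\Lambda_{k-1}$, define $\Lambda_k$ as the infimum of $\Hc(f,f)$ over the closed subspace $L^*\cap\{f_0,\ldots,f_{k-1}\}^\perp$ (orthogonality in $L^2$, which by Fact~\ref{f.1} is a weakly closed condition in $L^*$ since $L^2$ convergence along $L^*$-bounded sequences is automatic). Repeating the direct-method argument in this subspace — the key points being that the orthogonality constraint passes to the strong $L^2$ limit and that the subspace is still weakly closed in $L^*$ — gives a minimizer $f_k$, and the same first-variation argument, now using that $f_k\perp f_0,\ldots,f_{k-1}$ and that each $f_i$ is itself an eigenfunction, shows $\Hc(\ph,f_k)=\Lambda_k\ip{\ph,f_k}$ first for $\ph\perp f_0,\ldots,f_{k-1}$ and then for all $\ph\in C_0^\infty$ (the components along the $f_i$ contribute $\Lambda_i\ip{f_i,f_k}=0$). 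Monotonicity $\Lambda_{k-1}\le\Lambda_k$ is immediate since the infimum is over a smaller set.

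Finally I would argue that this list exhausts the spectrum with correct multiplicities, i.e.\ that $\Lambda_k$ really is ``the $(k+1)$st lowest eigenvalue counting multiplicity''. For this, suppose $\Lambda$ is an eigenvalue with eigenfunction $g$; decompose $g$ against the $f_i$ already constructed. Since only finitely many $\Lambda_i$ lie below any level (noted before the proposition via \eqref{cbound} and Fact~\ref{f.2}), and since Lemma~\ref{l.one} caps each eigenspace at dimension $r$, a counting argument shows that if $\Lambda<\Lambda_k$ for the relevant $k$ then $g$ must lie in $\Span\{f_i:\Lambda_i=\Lambda\}$ — otherwise its projection onto $\{f_0,\ldots,f_{k-1}\}^\perp$ would be a unit vector (after normalization) in the constraint set with $\Hc$-value $\Lambda<\Lambda_k$, contradicting the definition of $\Lambda_k$. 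Hence every eigenvalue appears in the list with full multiplicity and the recursion produces an orthonormal eigenbasis for the span of all low-lying eigenspaces.

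I expect the main obstacle to be the lower semicontinuity / compactness step: one must be careful that the indefinite term $-\ip{f',Yf}-\ip{f,Yf'}$ in $\Hc(\cdot,\cdot)$ does not destroy weak lower semicontinuity, which is precisely why rewriting $\Hc$ as (equivalent norm squared) minus (a multiple of $\norm{\cdot}^2$) via Lemma~\ref{l.cbound} is essential — the first piece is weakly lsc and the second is weakly continuous because $L^*$-bounded sequences converge \emph{strongly} in $L^2$ by Fact~\ref{f.2}(iv), the latter relying on the uniform integrability furnished by the $\int\ol\eta\abs{f_n}^2$ bound. A secondary subtlety is the Dirichlet subspace: throughout one must check that the constraint ``Dirichlet components supported away from the origin / vanishing at $0$'' is preserved under the $L^*$-limits (this is part of Fact~\ref{f.1}) and that the first-variation test functions respect it, but these are bookkeeping matters rather than real difficulties.
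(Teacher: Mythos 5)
Your proposal is correct, and its overall skeleton (recursive direct method in $L^*$, minimizing sequence, compactness from Lemma~\ref{l.dbound}'s continuum analogue Lemma~\ref{l.cbound} and Fact~\ref{f.2}, first-variation argument, induction over $L^2$-orthocomplements) is the same as the paper's. The genuine difference is in the key lower-semicontinuity step. The paper proves $\Hc(f,f)\le\liminf\Hc(f_n,f_n)$ by a term-by-term comparison, reusing the explicit decomposition from the proof of Lemma~\ref{l.cbound}: weak convergence of $f_n'$ for $\norm{f'}^2$, pointwise convergence plus Fatou for the $\ip{f,\eta f}$ term, and strong $L^2$ convergence for the $\ol\omega$, $(\ol\omega-\omega)$ and boundary terms. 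You instead argue abstractly: by \eqref{cbound} the shifted form $Q_*=\Hc+\bigl((1+b^2)C+1\bigr)\ip{\cdot,\cdot}$ is an inner product on $L^*$ whose norm is equivalent to $\norm{\cdot}_*$, hence weakly lower semicontinuous, while the subtracted multiple of $\norm{\cdot}^2$ converges along the sequence by Fact~\ref{f.2}(iv); this is a valid Gårding-type argument that needs only the \emph{statement} of Lemma~\ref{l.cbound}, not its internal $\eta/\omega$ decomposition, and is shorter and more robust (it would apply verbatim to any form satisfying such a two-sided bound). What the paper's hands-on route buys is that the same decomposition and estimates are already in place and get reused elsewhere (e.g.\ in Lemma~\ref{l.dbound} and Lemma~\ref{l.conv}), so nothing abstract needs to be invoked. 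You are also more explicit than the paper about why the recursion yields the $(k+1)$st lowest eigenvalue with correct multiplicity (projecting a putative lower eigenfunction onto the orthocomplement and computing its Rayleigh quotient); the paper compresses this into the remark that any eigenpair $(\Lambda,g)$ has $\Hc(g,g)=\Lambda\norm{g}^2$ and the instruction to repeat the argument with $\ph,g$ restricted to the orthocomplement, but your bookkeeping is the standard way to fill that in. Two trivial slips: the weak closedness of the orthogonality constraint should cite Fact~\ref{f.2}(iv) rather than Fact~\ref{f.1}, and in the quaternionic case the ``multiply $\ph$ by $i,j,k$'' step should respect the side on which scalars act, but neither affects the argument.
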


%
\begin{remark} Since we must have $\Lambda_k\to\infty$, $\{\Lambda
_0,\Lambda_1,\ldots\}$ exhausts the spectrum and the resolvent
operator is compact. We do not make this statement precise.
\end{remark}

\begin{pf} First taking $k = 0$, the infimum $\tilde\Lambda$ is
finite by~(\ref{cbound}). Let $f_n$ be a minimizing sequence; it is
$L^*$-bounded, again by~(\ref{cbound}). Pass to a subsequence
converging to $f\in L^*$ in all the modes of Fact~\ref{f.2}. In
particular, $1 = \llVert f_n\rrVert\to\llVert f\rrVert$, so $\Hc
(f,f)\ge\tilde\Lambda$ by definition. But also
\begin{eqnarray*}
\Hc(f,f) &=& \bigl\llVert f'\bigr\rrVert^2 + \langle
f,\eta f \rangle+ \bigl\langle f,\ol\omega'f \bigr\rangle+ 2\Re
\bigl\langle f',(\ol\omega-\omega)f \bigr\rangle+
f(0)^\dag W f(0)
\\
&\le&\liminf_{n\to\infty} \Hc(f_n,f_n)
\end{eqnarray*}
by a term-by-term comparison. Indeed, the inequality holds for the
first term by weak convergence, and for the second term by pointwise
convergence and Fatou's lemma; the remaining terms are just equal to
the corresponding limits, because the second members of the inner
products converge in $L^2$ by the bounds from the proof of Lemma~\ref
{l.cbound} together with $L^*$-boundedness and $L^2$-convergence.
Therefore, $\Hc(f,f) = \tilde\Lambda$.

A standard argument now shows $(\tilde\Lambda,f)$ is an
eigenvalue--eigenfunction pair: taking $\ph\in C_0^\infty$ and
$\varepsilon$ small, put $f^\varepsilon= (f+\varepsilon\ph)/{\llVert
f+\varepsilon\ph\rrVert}$; since $f$ is a minimizer,
$\frac{d}{d\varepsilon}\mid_{\varepsilon=0}\Hc
(f^\varepsilon,f^\varepsilon)$ must vanish;
the latter says precisely~(\ref{ee}) with $\tilde\Lambda$. Finally,
suppose $(\Lambda,g)$ is any eigenvalue--eigenfunction\vspace*{1pt} pair; then $\Hc
(g,g) = \Lambda$, and hence $\tilde\Lambda\le\Lambda$. We are thus
justified in setting $\Lambda_0 = \tilde\Lambda$ and $f_0=f$.

Proceed inductively, minimizing now over the orthocomplement $\{f\in
L^*: \llVert f\rrVert=1, f\perp f_0,\ldots, f_{k-1}\}$.
Again, $L^2$-convergence of a minimizing sequence guarantees that the
limit remains admissible; as before, the limit is in fact a minimizer;
conclude by applying the arguments of the previous paragraph with $\ph
,g$ also restricted to the orthocomplement.
\end{pf}

\subsection{Statement}

We are finally ready to state the main result of this section. Recall
that we consider eigenvectors of a matrix $H_n\in M_n(\F)$ in the
embedding $\F^n\subset\ell^2_n(\Z_+,\F^r)\hookrightarrow L^2(\R
_+,\F^r)$ above.

%
\begin{theorem}\label{t.conv} Let $H_n$ be a rank $r$ block
tr-diagonal ensemble as in~(\ref{Hn}) satisfying Assumptions \ref{ass1}--\ref{ass3}, and
let $\lambda_{n,k}$ be its $(k+1)$st lowest eigenvalue. Define the
associated form $\Hc$ as in~(\ref{Hc}) and let $\Lambda_k$ be its
a.s. defined $(k+1)$st lowest eigenvalue. In the deterministic setting
of subsequential pathwise coupling, $\lambda_{n,k}\to\Lambda_k$ for
each $k = 0,1,\ldots.$ Furthermore, a sequence of normalized
eigenvectors corresponding to $\lambda_{n,k}$ is precompact in $L^2$
norm, and every subsequential limit is an eigenfunction corresponding
to $\Lambda_k$. Finally, convergence holds uniformly over possible
$W_n,W\ge-b>-\infty$. One recovers the corresponding distributional
tightness and convergence statements for the full sequence, jointly for
$k = 0,1,\ldots$ in the sense of finite-dimensional distributions and
jointly over $W_n,W$.
\end{theorem}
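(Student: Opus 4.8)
The proof of Theorem~\ref{t.conv} follows the RRV/Part~I template, adapted to the matrix-valued setting, and splits naturally into a lower bound and an upper bound on the eigenvalues via the min-max principle, plus a compactness argument for the eigenvectors. The plan is to first set up the comparison between the discrete quadratic form and the continuum form $\Hc$. For a test function $f$ in the continuum, one discretizes it (sampling at the lattice points $j/m_n$, or averaging over the cells) to obtain a vector $f_n\in\F^n$; conversely, a discrete eigenvector is embedded as a step function and then, if needed, smoothed. The core estimate is that for $f_n$ the discretization of a fixed $f\in C_0^\infty$ (with Dirichlet components supported away from the origin), one has $\langle f_n, H_n f_n\rangle \to \Hc(f,f)$ and $\norm{f_n}\to\norm{f}$. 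This uses: $D_n\dg D_n$ applied to a smooth function converges to $-f''$ tested against $f$, i.e.\ $\norm{D_n f_n}^2\to\norm{f'}^2$; the potential term $\langle f_n, V_n f_n\rangle$ converges to $-\langle f',Yf\rangle-\langle f,Yf'\rangle$ after summation by parts, using Assumption~1 (the combination $Y_{n,1}+\tfrac12(Y_{n,2}+Y_{n,2}\dg)\Rightarrow Y$ compact-uniformly is exactly what appears); and the boundary term $\langle f_n, W_nE_n f_n\rangle \to f(0)\dg Wf(0)$ from Assumption~3, with the Dirichlet components contributing nothing because they vanish near the origin. This gives $\Lambda_k\ge\limsup_n\lambda_{n,k}$ by plugging optimal (nearly orthonormal) continuum test functions into the discrete min-max.

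For the reverse inequality $\liminf_n\lambda_{n,k}\ge\Lambda_k$, one takes normalized discrete eigenvectors $f_{n,0},\ldots,f_{n,k}$ with eigenvalues $\lambda_{n,0}\le\cdots\le\lambda_{n,k}$ (assumed bounded along the subsequence, which one justifies a posteriori or via a preliminary crude bound), embeds them as step functions, and shows this family is precompact in $L^2$ with every subsequential limit lying in $L^*$ and satisfying the eigenfunction equation~\eqref{ee}. Precompactness is the heart of the matter and is where Assumption~2 does its work: the growth bound~\eqref{a21} forces $\langle f_n,\eta_n f_n\rangle$ and hence $\int\ol\eta\abs{f_n}^2$ to be controlled, giving tightness/uniform integrability in the tail, while the oscillation bound~\eqref{a23} (via the moving-average decomposition $\omega = \ol\omega+(\omega-\ol\omega)$ exactly as in the proof of Lemma~\ref{l.cbound}) controls the $\omega$-part of the potential; together with the kinetic energy bound $\norm{D_n f_n}^2\lesssim\lambda_{n,k}+\text{const}$ this shows the embedded eigenvectors are $L^*$-bounded in the appropriate discrete sense, so by a discrete analogue of Fact~\ref{f.2} they converge (along a further subsequence) uniformly on compacts and in $L^2$ to some $f\in L^*$ with $\norm{f}=1$. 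One then passes to the limit in the discrete eigenvalue equation---best done in the integrated form analogous to~\eqref{integrated}, since that form only involves $Y_n$ (not its ``derivative'') and Assumption~1 gives $Y_n\to Y$ uniformly on compacts---to conclude $f$ is an eigenfunction of $\Hc$ with eigenvalue $\lim\lambda_{n,k}$. A Gram-Schmidt/orthogonality argument (the limits of $f_{n,0},\ldots,f_{n,k}$ are mutually $L^2$-orthogonal, using that distinct discrete eigenvalues give orthogonal eigenvectors and handling near-coincident eigenvalues by rotating within eigenspaces) then shows the limit eigenvalue is $\ge\Lambda_k$ by Proposition~\ref{p.var}. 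Combining the two bounds yields $\lambda_{n,k}\to\Lambda_k$ and, revisiting the compactness argument, the stated eigenvector convergence.

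For the uniformity over $W_n,W\ge-b$: the key point is that all the quantitative estimates---Lemma~\ref{l.cbound} and its discrete counterpart---depend on $W$ only through the lower bound $-b$, so the $L^*$-bounds on eigenvectors, and hence the compactness, are uniform in $W\ge-b$. The Dirichlet directions ($w_i=+\infty$) are handled as the monotone limit $w_{n,i}\to+\infty$: by the pathwise monotonicity of eigenvalues in $W$ (Remark~\ref{r.mon}, which holds verbatim at finite $n$ for the matrix $H_n$) one sandwiches the $W_n$ case between the $r_0$-Dirichlet case and finite-$w$ cases, and a diagonal argument over a countable dense set of boundary data $W$ upgrades pointwise convergence to uniform convergence over the whole parameter range. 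Finally, to pass from the deterministic subsequential-coupling statement back to a distributional statement for the original sequence: since every subsequence of $\{H_n\}$ has a further subsequence along which (after Skorokhod coupling) the conclusion holds almost surely, the joint law of $(\lambda_{n,0},\ldots,\lambda_{n,k})$ over $W$ converges to that of $(\Lambda_0,\ldots,\Lambda_k)$ over $W$, which is the asserted finite-dimensional-distribution convergence jointly over $W_n,W$.

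I expect the main obstacle to be the precompactness of the embedded discrete eigenvectors in $L^2$---i.e.\ establishing the discrete analogue of Facts~\ref{f.1}--\ref{f.2} with constants uniform in $n$ and in $W\ge-b$---since this is where the matrix-valued growth and oscillation bounds of Assumption~2 must be deployed carefully (the spectral-norm matrix inequalities, the moving-average trick for the non-monotone part $\omega_n$, and the tail uniform integrability coming from $\ol\eta\to\infty$), and where the Dirichlet components require separate treatment because they are not controlled by a Robin-type boundary term. A secondary subtlety is correctly matching discrete and continuum objects at the boundary (the ``discretized delta'' $E_n$ and the truncation $R_n$ in the lower-right corner), but this is a finite, local issue that does not interact with the tail analysis.
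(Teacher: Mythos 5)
Your proposal is correct and follows essentially the same route as the paper: a discrete form bound with constants uniform in $n$ and in $W\ge-b$ (Lemma~\ref{l.dbound}, proved from Assumption~2 via the moving-average decomposition), discrete-to-continuum compactness (Lemma~\ref{l.dc}), convergence of the forms tested against $C_0^\infty$ functions (Lemma~\ref{l.conv}), the two-sided min-max argument, and the subsequence-plus-Skorokhod reduction for the distributional statement. The only deviations are minor: the paper gets the upper bound by induction, orthogonalizing a single approximant $f_k^\e$ against the discrete eigenvectors $v_{n,j}$ rather than inserting a $(k+1)$-dimensional continuum test space into the discrete min-max; it identifies subsequential limits as eigenfunctions via the weak form $\ip{\ph,H_nf_n}\to\Hc(\ph,f)$ rather than the integrated equation; and it handles the Dirichlet directions directly through the $W_n^+$ term in the $L^*_n$ norm, with no need for your monotone-sandwich argument.
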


%
\begin{remark} The eigenvector convergence statement requires
subsequences for two reasons: possible multiplicity of the limiting
eigenvalues, and the sign or phase ambiguity of the eigenvectors. It is
possible to formulate the conclusion of the theorem very simply using
spectral projections. [If $H$ has purely discrete spectrum, the
spectral projection $\1_A(H)$ is simply orthogonal projection of $L^2$
onto the span of those eigenvectors of $H$ whose eigenvalues lie in
$A\subset\R$.] The joint eigenvalue-eigenvector convergence may be
restated in the deterministic setting as follows: \emph{For all $a\in
\R\setminus\{\Lambda_0,\Lambda_1,\ldots\}$, the spectral
projections $\1_{(-\infty,a)}(H_n)\to\1_{(-\infty,a)}(\Hc)$ in
$L^2$ operator norm.} The corresponding distributional statement holds
jointly over all $a$ that are a.s. off the limiting spectrum (or
simply all $a$ if the distributions of the $\Lambda_k$ are nonatomic).
\end{remark}

%
\begin{remark}
An operator-theoretic formulation of the theorem (which we do not
develop here) would state a norm resolvent convergence: the resolvent
matrices, precomposed with the finite-rank projections $L^2\to\F^n$
associated with the embedding, converge to the continuum resolvent in
$L^2$ operator norm. This mode of convergence is the strongest one can
hope for in the unbounded setting [see, e.g., Section VIII.7 of \citet{RS1,Weidmann}].
\end{remark}

The proof will be given over the course of the next two subsections.

\subsection{Tightness}

We will need a discrete analogue of the $L^*$ norm and a counterpart of
Lemma~\ref{l.cbound} with constants uniform in $n$. For $v\in\F
^n\hookrightarrow L^2(\R_+,\F^r)$ as above, define the \textit
{$L^*_n$ norm} by
%
%
\begin{eqnarray}
\label{Lstar_n} \llVert v\rrVert_{*n}^2 &=& \bigl\langle v,
\bigl(D_n^\dag D_n+ 1+\ol\eta+E_n
W_n^+ \bigr)v \bigr\rangle
\nonumber\\[-8pt]\\[-8pt]\nonumber
&=& \int_0^\infty\bigl(\llvert D_n v
\rrvert^2 + (1+\ol\eta)\llvert v\rrvert^2
\bigr)+v(0)^\dag W_n^+ v(0)
\end{eqnarray}
with the nonnegative part $W_n^+$ defined as before.

%
\begin{remark}\label{r.dir} When considering just a single $W_n,W$,
the boundary term in~(\ref{Lstar_n}) is really only required when the
limit includes Dirichlet terms; it is simpler, however, not to
distinguish the two cases here. More importantly, including this term
clarifies the role of the boundary term in the following key bound.
Note that the original case considered in {RRV} has $W_n =
m_n$ in our notation. (The $H_n$ form and $L^*_n$ norm there contained
a term $m_n\llvert v_0\rrvert^2$, though it is hidden in the
fact that, in our notation, they use $\triangle_n$ in place of $D_n$.)
\end{remark}

%
\begin{lemma}\label{l.dbound} For every $0<c<1/4\kappa$ there is a
$C>0$ such that, for each $b>0$, the following holds for all $n$,
$W_n\ge-b$ and $v\in\F^n$:
%
%
\begin{equation}
\label{dbound} c\llVert v\rrVert_{*n}^2-
\bigl(1+b^2\bigr)C\llVert v\rrVert^2 \le\langle
v,H_n v \rangle\le C\llVert v\rrVert_{*n}^2.
\end{equation}
\end{lemma}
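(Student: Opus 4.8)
\textbf{Proof proposal for Lemma~\ref{l.dbound}.}

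The plan is to establish the discrete bound \eqref{dbound} by a term-by-term imitation of the continuum proof of Lemma~\ref{l.cbound}, using the decomposition \eqref{Ydec} of the integrated potentials into a monotone part and an oscillation part. Writing $\ip{v,H_nv}$ out via \eqref{Hn} and \eqref{Vn}, it equals
\[
\norm{D_nv}^2 + \Ip{v,\diag(\triangle_nY_{n,1})v} + \Re\Ip{v,\diag(\triangle_nY_{n,2})T_nv} + v(0)\dg W_nv(0),
\]
where the $R_n$ factors can be dropped since $v\in\F^n$ is already in their range. (The asymmetry in the top-left block---$m_n^2$ rather than $2m_n^2$---only helps, contributing a nonnegative $m_n^2\abs{v_0}^2$ term, so I would simply discard it or absorb it into the positive side.) First I would handle the ``$\eta$'' part: substituting $\triangle_nY_{n,i}=\eta_{n,i}+\triangle_n\omega_{n,i}$ and summing $i=1,2$, the diagonal $\eta$-contribution is $\ip{v,(\eta_{n,1}+\eta_{n,2})v}$, which by the pointwise matrix bounds \eqref{a21} lies between $\ip{v,(\ol\eta/\kappa-\kappa)v}$ and $\kappa\ip{v,(1+\ol\eta)v}$; together with $\norm{D_nv}^2$ this reproduces a two-sided comparison with $\norm{v}_{*n}^2$ up to the constant $\kappa$ and an $O(\norm{v}^2)$ error, exactly as in the continuum. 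The off-diagonal $\eta$-term needs a little care because $\eta_{n,2}$ appears paired with the shift $T_n$; here I would use \eqref{a22}, $\eta_{n,2}\le 2m_n^2$, to control it: by Cauchy--Schwarz $\abs{\Re\ip{v,\diag\eta_{n,2}T_nv}}\le\ip{v,\eta_{n,2}v}$ after a discrete ``summation by parts'' that trades $T_n$ for $D_n$, the $2m_n^2$ bound ensuring the difference-quotient terms so produced are dominated by $\norm{D_nv}^2$ with a controllable constant (this is the source of the worse constant $1/4\kappa$ versus $1/\kappa$).

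Next I would treat the ``$\omega$'' part, mimicking the moving-average trick from Lemma~\ref{l.cbound}. Define the discrete moving average $\ol\omega_{n,i}$ over a unit window (i.e.\ over $\sim m_n$ steps) so that $\triangle_n\ol\omega_{n,i}$ is a telescoped difference and $\ol\omega_{n,i}-\omega_{n,i}$ is a small-oscillation remainder controlled by \eqref{a23}; write $\triangle_n\omega_{n,i}=\triangle_n\ol\omega_{n,i}+(\triangle_n\omega_{n,i}-\triangle_n\ol\omega_{n,i})$ and, for the associated quadratic-form contributions, perform the discrete analogue of the integration by parts $-2\Re\ip{f',\omega f}=\ip{f,\ol\omega'f}+2\Re\ip{f',(\ol\omega-\omega)f}$. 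For the first resulting piece, \eqref{a23} gives $\abs{\ol\omega_{n,i}(\xi)-\ol\omega_{n,i}(x)}\lesssim (C_\e+\e\ol\eta(x))$ on unit windows, so it is absorbed into $\e\norm{v}_{*n}^2+C_\e\norm{v}^2$; for the second, Cauchy--Schwarz against $D_nv$ together with $\abs{\ol\omega_{n,i}-\omega_{n,i}}\le(C_\e+\e\ol\eta)^{1/2}$ gives $\sqrt\e\norm{D_nv}^2+\frac1{\sqrt\e}\ip{v,(C_\e+\e\ol\eta)v}\le\sqrt\e\norm{v}_{*n}^2+C_\e'\norm{v}^2$. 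Choosing $\e$ small then yields a version of \eqref{dbound} with the boundary term omitted.

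Finally I would reinstate the boundary term $v(0)\dg W_nv(0)$, splitting $W_n=W_n^+-W_n^-$. The positive part $v(0)\dg W_n^+v(0)$ is exactly the boundary term of $\norm{v}_{*n}^2$, so since $c\le 1\le C$ it can simply be added to both sides. For the negative part I need a discrete trace bound $\abs{v(0)}^2\le(\e/b)\norm{D_nv}^2+(b/\e)\norm{v}^2$ uniformly in $n$: this follows from $\abs{v_0}^2=-\sum_{j\ge 0}(\abs{v_{j+1}}^2-\abs{v_j}^2)\le 2\sum_j\abs{v_j}\abs{v_{j+1}-v_j}$ and Cauchy--Schwarz, rescaled by $m_n$ (this is the discrete counterpart of Fact~\ref{f.1}/Fact~\ref{f.2}, and the place where the embedding normalization matters). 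Since $W_n^-\le b$, the negative contribution is bounded by $\e\norm{v}_{*n}^2+C''_\e b^2\norm{v}^2$ and is subtracted from the estimate already in hand; a final choice of $\e$ gives \eqref{dbound} with all constants independent of $n$ and of $W_n,W\ge -b$. The main obstacle I anticipate is the off-diagonal bookkeeping---getting the discrete summation-by-parts identities right when $\triangle_nY_{n,2}$ is paired with $T_n$, and tracking how the $\eta_{n,2}\le 2m_n^2$ bound feeds precisely into the loss of a factor $4$ in the constant---rather than anything conceptually new beyond the continuum argument.
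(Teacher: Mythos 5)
Your proposal is correct and follows essentially the same route as the paper's proof: the same $\eta/\omega$ decomposition of \eqref{Ydec}, the same discrete moving-average and summation-by-parts treatment of the $\omega$-terms, and the same boundary splitting $W_n=W_n^+-W_n^-$ with the discrete trace bound $\abs{v(0)}^2\le 2\norm{D_nv}\norm{v}$. Your Cauchy--Schwarz handling of the $\eta_{n,2}$-with-$T_n$ pairing via \eqref{a22} is just an equivalent rephrasing of the paper's explicit square rearrangement and produces the same constant $1/4\kappa$, so there is no substantive difference in approach.
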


\begin{pf} We drop the subscript $n$. The form associated with
(\ref{Hn}) is
%
%
\begin{equation}
\label{Hnform} \langle v,H v \rangle= \llVert D v\rrVert^2 + \langle
v,V v \rangle+v(0)^\dag W v(0).
\end{equation}
The potential term $ \langle v,Vv \rangle= \int_0^{\infty
} v^\dag V v$, defined in (\ref{Vn}), is analyzed according to~(\ref{Ydec}):
\begin{eqnarray*}
v^\dag Vv &=& v^\dag(\triangle Y_1)v + \Re
v^\dag(\triangle Y_2)Tv
\\
&=& \bigl(v^\dag\eta_1 v +\Re v^\dag
\eta_2Tv \bigr) + \bigl(v^\dag(\triangle
\omega_1)v+\Re v^\dag(\triangle\omega_2)Tv
\bigr).
\end{eqnarray*}
Together with $\llvert D_n v\rrvert^2$, the $\eta$-terms
provide the structure of the bound as we now show. Afterward we will
control the $\omega$-terms and lastly deal with the boundary term.

Recall~(\ref{a21}) and that $\eta_i\ge0$. For an upper bound,
rearrange $(v-Tv)^\dag\eta_2(v-Tv)\ge0$ to
\begin{eqnarray*}
\Re v^\dag\eta_2 Tv &\le&\tfrac{1}{2}v^\dag
\eta_2v+\tfrac
{1}{2}(Tv)^\dag\eta_2 Tv
\\
&\le&\tfrac{1}{2}\kappa(\ol\eta+1) \bigl(\llvert v\rrvert^2+
\llvert Tv\rrvert^2 \bigr). 
\end{eqnarray*}
Now $\int\ol\eta\llvert Tv\rrvert^2=\int(T^\dag\ol\eta
)\llvert v\rrvert^2\le\int\ol\eta\llvert v\rrvert^2$ since $\ol\eta
$ is nondecreasing, and we obtain
%
%
\begin{equation}
\label{eta_u} \llVert Dv\rrVert^2 + \langle v,\eta_1 v
\rangle+\Re\langle v,\eta_2 Tv \rangle\le2\kappa\llVert v\rrVert
_*^2.
\end{equation}
Toward a lower bound, we use the slightly tricky rearrangement $0\le
(\frac{1}{2}v+Tv)^\dag\eta_2(\frac{1}{2}v+Tv) = 3\Re v^\dag\eta
_2 Tv+ (Tv-v)^\dag
\eta_2(Tv-v)-\frac{3}4 v^\dag\eta_2 v$. With~(\ref{a22}), we get
\begin{eqnarray*}
\Re v^\dag\eta_2 Tv &\ge&-\tfrac{1}3(Tv-v)^\dag
\eta_2(Tv-v)+\tfrac{1}4 v^\dag\eta_2 v
\\
&\ge&-\tfrac{2}3\llvert Dv\rrvert^2+\tfrac{1}4
v^\dag\eta_2 v,
\end{eqnarray*}
so by~(\ref{a21}),
\[
\llvert Dv\rrvert^2 + v^\dag\eta_1 v+ \Re
v^\dag\eta_2 Tv \ge\tfrac{1}3\llvert Dv\rrvert
^2 +\tfrac{1}4(\ol\eta/\kappa-\kappa)\llvert v\rrvert
^2
\]
and thus
%
%
\begin{equation}
\label{eta_l} \llVert Dv\rrVert^2 + \langle v,\eta_1 v
\rangle+\Re\langle v,\eta_2 Tv \rangle\ge(1/4\kappa)\llVert v\rrVert
^2_* -(\kappa/4)\llVert v\rrVert^2.
\end{equation}

We handle the $\omega$-terms with a discrete analogue of the
decomposition used in the continuum proof. Consider the moving average
\[
\ol\omega_i = \lfloor m\rfloor^{-1}\sum
_{j=1}^{\lfloor m\rfloor
}T^j\omega_i
\]
which has $\triangle\ol\omega_i = (m/\lfloor m\rfloor)(T^{\lfloor
m\rfloor}-1)\omega
_i$; it is convenient to extend $\omega_i(x) = \omega_i(\lceil
n/r\rceil/m_n)$ for $x>\lceil n/r\rceil/m_n$. Decompose $\omega_i =
\ol\omega
_i+(\omega_i-\ol\omega_i)$. For the $\omega_1$-term,
\[
v^\dag\triangle\omega_1 v = \bigl(m/\lfloor m\rfloor\bigr)
v^\dag\bigl(T^{\lfloor
m\rfloor}\omega_1-\omega_1
\bigr) v + v^\dag\triangle(\omega_1-\ol\omega_1)v.
\]
By~(\ref{a23}) and Cauchy--Schwarz, the first term is bounded
absolutely by $(C_\varepsilon+\varepsilon\ol\eta)\llvert
v\rrvert^2$ and its integral by $\varepsilon\llVert v\rrVert
_*^2+C_\varepsilon\llVert v\rrVert^2$. The second term
calls for a summation by parts:
\begin{eqnarray*}
\bigl\langle v,\triangle(\omega_1-\ol\omega_1) v \bigr
\rangle&=& m_n \bigl( \bigl\langle v,(\omega_1-\ol
\omega_1)v \bigr\rangle- \bigl\langle Tv,(\omega_1-\ol
\omega_1)Tv \bigr\rangle\bigr)
\\
&=& m_n\Re\bigl\langle v-Tv,(\omega_1-\ol
\omega_1) (v+Tv) \bigr\rangle
\\
&=& \Re\bigl\langle Dv,(\ol
\omega_1-\omega_1) (v+Tv) \bigr\rangle.
\end{eqnarray*}
The averaged bound $\llvert\ol\omega_1 - \omega_1\rrvert
\le(C_\varepsilon+\varepsilon\ol\eta)^{1/2}$ and Cauchy--Schwarz
bound the integrand
\[
\bigl\llvert(Dv)^\dag(\ol\omega_1-\omega_1)
(v+Tv)\bigr\rrvert\le\sqrt{\varepsilon}\llvert Dv\rrvert^2+(1/4
\sqrt{\varepsilon}) (C_\varepsilon+\varepsilon\ol\eta) \bigl(\llvert
v\rrvert
^2+\llvert Tv\rrvert^2\bigr),
\]
and its integral by $\sqrt{\varepsilon}\llVert v\rrVert
_*^2+C_\varepsilon'\llVert v\rrVert^2$. One thus obtains a
similar bound on $\llvert \langle v,(\triangle\omega_1)
v \rangle\rrvert$.

There are corresponding bounds for the $\omega_2$-terms. For the $\ol
\omega_2$-term, use $2\llvert v\rrvert\llvert Tv\rrvert\le\llvert
v\rrvert^2+\llvert Tv\rrvert^2$.
For the $(\omega_2-\ol\omega_2)$-term, modify the summation by parts:
\begin{eqnarray*}
&& \Re\bigl\langle(v,\triangle(\omega_2-\ol\omega_2) Tv
\bigr\rangle
\\
&&\qquad = m_n\Re\bigl( \bigl\langle(v-Tv),(
\omega_2-\ol\omega_2)Tv \bigr\rangle+ \bigl\langle Tv,(
\omega_2-\ol\omega_2) \bigl(Tv-T^2 v\bigr)
\bigr\rangle\bigr)
\\
&&\qquad = \Re\bigl\langle Dv+TDv,(\ol\omega_2-\omega_2)Tv
\bigr\rangle.
\end{eqnarray*}
Incorporating all the $\omega$-terms into~(\ref{eta_u}), (\ref{eta_l})
and setting $\varepsilon$ small, we obtain~(\ref{dbound}) but with
the boundary terms omitted (from both the form and the norm).

We break the boundary term in~(\ref{Hnform}) into its positive and
negative parts. A~discrete analogue of a bound from Fact~\ref{f.1}
will be useful:
\[
\bigl\llvert v(0)\bigr\rrvert^2 = \int_0^{\infty}
-D \llvert v\rrvert^2 = \int_0^\infty
\Re m(v- T v)^\dag(v + T v) \le2\llVert D v\rrVert\llVert v\rrVert.
\]
It gives $\llvert v(0)\rrvert^2 \le(\varepsilon/b)\llVert D v\rrVert
^2+(b/\varepsilon)\llVert v\rrVert^2$,
and then $W^-\le b$ implies that
\[
0 \le v(0)^\dag W^- v(0) \le\varepsilon\llVert v\rrVert
_{*}^2+C''_\varepsilon
b^2\llVert v\rrVert^2
\]
which may be subtracted from the inequality already obtained. The
positive part may simply be added in using that $c\le1\le C$. We thus
arrive at~(\ref{dbound}).
\end{pf}

%
\begin{remark}If the $W_n$ are not bounded below then the lower bound
in~(\ref{dbound}) breaks down: in fact, the bottom eigenvalue of $H_n$
really goes to $-\infty$ like minus the square of the bottom
eigenvalue of $W_n$. This is the supercritical regime.
\end{remark}

\subsection{Convergence}

We begin with a simple lemma, a discrete-to-continuous version of Fact
\ref{f.2}.

%
\begin{lemma}\label{l.dc}
Let $f_n\in\F^n$ with $\llVert f_n\rrVert_{*n}$ uniformly
bounded. Then there exist $f\in L^*$ and a subsequence\vspace*{1pt} along which \textup{(i)}
$f_n\to f$ uniformly on compacts, \textup{(ii)}~$f_n\to_{L^2} f$, and \textup{(iii)}
$D_n f_n\to f'$ weakly in $L^2$.
\end{lemma}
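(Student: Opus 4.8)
The plan is to mimic the proof of Fact~\ref{f.2}, replacing the classical compactness inputs by their discrete analogues and using the $L^*_n$ bound from Lemma~\ref{l.dbound} as the source of uniform control. The first thing I would do is record that a uniform bound on $\norm{f_n}_{*n}$ gives, by the very definition \eqref{Lstar_n}, uniform bounds on $\norm{D_n f_n}_{L^2}$, on $\int_0^\infty(1+\ol\eta)\abs{f_n}^2$, and hence on $\norm{f_n}_{L^2}$. So $(f_n)$ is bounded in $L^2$, $(D_n f_n)$ is bounded in $L^2$, and we have the extra decay weight $\ol\eta$.

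Next I would extract the weak limits by Banach--Alaoglu: pass to a subsequence along which $f_n\rightharpoonup f$ and $D_n f_n \rightharpoonup h$ weakly in $L^2$. The key identification is that $h = f'$ in the distributional sense. Here one uses that $D_n$ is a discrete derivative: testing $D_n f_n$ against $\1_{[0,x]}$ and using the telescoping/summation-by-parts structure of $D_n = m_n(T_n-1)$, one gets $\int_0^x D_n f_n \approx f_n(x) - f_n(0)$ up to boundary-discretization terms that vanish; combined with pointwise convergence (obtained next) this shows $\int_0^x h = f(x)-f(0)$, i.e. $h=f'$. The Hölder-type bound (the discrete analogue $\abs{v(0)}^2\le 2\norm{D_nv}\norm{v}$ appearing in the proof of Lemma~\ref{l.dbound}, applied on all shifted intervals) gives equicontinuity of the step functions $f_n$ at scale $m_n^{-1}\to 0$, so Arzelà--Ascoli yields uniform-on-compacts convergence along a further subsequence; this is statement (i), and it also pins down the pointwise values of $f$. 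Uniform-on-compacts convergence gives $L^2$ convergence on compacts, and the uniform bound on $\int\ol\eta\abs{f_n}^2$ (with $\ol\eta$ unbounded and nondecreasing) supplies uniform integrability of the tails, upgrading to $f_n\to f$ in $L^2$ globally, which is (ii). Finally $f\in L^*$: $f$ is the uniform limit of the $f_n$, $f'=h\in L^2$, and lower semicontinuity of $\int(1+\ol\eta)\abs{\cdot}^2$ under the convergence just established (Fatou plus pointwise convergence) bounds $\norm{f}_*$, so $f$ lies in the closure $L^*$. (One should also check the Dirichlet components of $f$ vanish at $0$, which follows as in Fact~\ref{f.1} from the uniform bound together with $W_n^+\to W^+$ on the relevant subspace, or directly since the Dirichlet-component bound is forced in the limit.)

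The main obstacle I expect is the identification $D_n f_n \rightharpoonup f'$ and the honest handling of the boundary/truncation discrepancies: $f_n$ lives in $\F^n\subset\ell^2_n$, which is only $\lceil n/r\rceil$ blocks wide with a partial last block, and $D_n f_n$ is computed on all of $\ell^2_n$, so one must be careful that the discrete derivative "at the edge" of the support does not leak mass — but this is controlled because $D_n f_n$ is an honest $L^2$ function whose norm is bounded, and the region near $\lceil n/r\rceil/m_n$ contributes vanishingly in the compact-uniform topology (any fixed compact is eventually well inside the support). Everything else is routine functional analysis transcribed from the proof of Fact~\ref{f.2}, with $D_n$ in place of $d/dx$ and the discrete Hölder bound in place of $\abs{f(x)-f(y)}=\abs{\int_x^y f'}$.
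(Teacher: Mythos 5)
Your argument is essentially correct, but it takes a different and more labor-intensive route than the paper. The paper's proof is a short reduction to the already-established continuum compactness statement (Fact~\ref{f.2}): it introduces the piecewise-linear interpolant $g_n(x) = f_n(0) + \int_0^x D_n f_n$, which agrees with $f_n$ at the grid points, checks $\norm{g_n}_*^2 \le 2\norm{f_n}_{*n}^2$, applies Fact~\ref{f.2} to the sequence $g_n$, and then transfers all three modes of convergence back to $f_n$ using $\norm{f_n-g_n}^2 \le \tfrac{1}{3}m_n^{-2}\norm{D_nf_n}^2$ and $D_n f_n = g_n'$ a.e.; the Dirichlet components are handled by noting that the boundary term in \eqref{Lstar_n} forces them to vanish in the limit. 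You instead redo the compactness directly at the discrete level: Banach--Alaoglu for $f_n$ and $D_nf_n$, identification of the weak limit of $D_nf_n$ as $f'$ by testing against $\1_{[0,x]}$, a discrete H\"older$(1/2)$ bound plus Arzel\`a--Ascoli for the step functions, and the $\ol\eta$-weight for tail uniform integrability. This works, and is more self-contained in that it does not invoke Fact~\ref{f.2}, but it costs you two technical points that the interpolation trick makes disappear. First, step functions are not equicontinuous in the classical sense, so your Arzel\`a--Ascoli step needs the ``equicontinuity up to scale $m_n^{-1}$'' version (or, in effect, you end up reintroducing the linear interpolant). Second, and more substantively, your final step ``finite $\norm{\cdot}_*$-norm plus vanishing Dirichlet data implies $f\in L^*$'' is a density assertion about the closure of $C_0^\infty$ that is true but nowhere proved in the paper and does require an argument (truncation, boundary correction, mollification); the paper avoids needing it because $f$ is produced as a weak limit \emph{inside} the Hilbert space $L^*$ of the functions $g_n$, so membership in $L^*$ comes for free from Fact~\ref{f.2}(i). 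If you supply that density lemma (or simply route your compactness through the $g_n$ as the paper does), your proof is complete.
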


\begin{pf}
Consider $g_n(x) = f_n(0) + \int_0^x D_n f_n$, a piecewise-linear
version of $f_n$; they coincide at points $x = i/m_n$, $i\in\Z_+$.
One easily checks that $\llVert g_n\rrVert^2_*\le2\llVert f_n\rrVert
^2_{*n}$, so some subsequence $g_n\to f\in L^*$
in all the modes of Fact~\ref{f.2}; for a Dirichlet component, the
boundary term in the $L^*_n$ norm guarantees that the limit vanishes at
0. But then also $f_n\to f$ compact-uniformly by a simple argument
using the uniform continuity of $f$, $f_n\to_{L^2} f$ because $\llVert
f_n-g_n\rrVert^2\le(1/3n^2)\llVert D_n f_n\rrVert
^2$, and $D_n f_n\to f'$ weakly in $L^2$ because $D_n f_n = g_n'$ a.e.
\end{pf}

Next, we establish a kind of weak convergence of the forms $
\langle\cdot,H_n\cdot\rangle$ to $\Hc(\cdot,\cdot)$. Let
$\Pc_n$ be orthogonal projection from $L^2$ onto $\F^n$ embedded as
above. The following facts will be useful and are easy to check. For
$f\in L^2$, $\Pc_n f \to_{L^2} f$ (the Lebesgue differentiation
theorem gives pointwise convergence and we have uniform
$L^2$-integrability); further, if $f'\in L^2$ then $D_n f\to_{L^2} f'$
($D_n f$ is a convolution of $f'$ with an approximate delta); for
smooth $\ph$, $\Pc_n \ph\to\ph$ uniformly on compacts. It is also
useful to note that $\Pc_n$ commutes with $R_n$ and with $D_n R_n$.
Finally, if $f_n\to_{L^2} f$, $g_n$ is $L^2$-bounded and $g_n\to g$
weakly in $L^2$, then $ \langle f_n,g_n \rangle\to
\langle f,g \rangle$.

%
\begin{lemma}\label{l.conv} Let $f_n\to f$ be as in the hypothesis and
conclusion of Lemma~\ref{l.dc}. Then for all $\ph\in C_0^\infty$ we
have $ \langle\ph,H_n f_n \rangle\to\Hc(\ph,f)$. In
particular, $\Pc_n\ph\to\ph$ in this way and so
%
%
\begin{equation}
\label{conv} \langle\Pc_n\ph,H_n\Pc_n \ph
\rangle= \langle\ph,H_n\Pc_n \ph\rangle\to\Hc(\ph,\ph).
\end{equation}
\end{lemma}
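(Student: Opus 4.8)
The plan is to expand $\ip{\ph,H_nf_n}$ using the defining formula~\eqref{Hn} for $H_n$ and match terms against the bilinear form~\eqref{Hc}. First I would note that for $n$ large, $\ph\in C_0^\infty$ is supported inside the interval where $R_n$ acts as the identity and, for Dirichlet components, away from the origin, so $R_n\ph = \ph$ and the outer $R_n$'s in~\eqref{Hn} may be dropped; similarly $R_nf_n = f_n$. Thus $\ip{\ph,H_nf_n} = \ip{D_n\ph,D_nf_n} + \ip{\ph,V_nf_n} + \ph(0)\dg W_nf_n(0)$. The first term converges to $\ip{\ph',f'}$: by the facts collected just before the lemma, $D_n\ph\to\ph'$ in $L^2$ (convolution with an approximate identity, $\ph$ smooth), while $D_nf_n\to f'$ weakly in $L^2$ and is $L^2$-bounded, so the inner product converges. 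The boundary term converges to $\ph(0)\dg Wf(0)$ (interpreted as zero on the Dirichlet part) since $W_n\to W$ by Assumption~3, $\ph(0)$ is fixed, and $f_n(0)\to f(0)$ by the uniform-on-compacts convergence in Lemma~\ref{l.dc}.

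The substantive part is the potential term $\ip{\ph,V_nf_n}$, with $V_n$ given by~\eqref{Vn}. I would handle the on-diagonal piece $\ip{\ph,\diag(\triangle_nY_{n,1})f_n}$ and the off-diagonal piece $\half\ip{\ph,(\diag(\triangle_nY_{n,2})T_n + T_n\dg\diag(\triangle_nY_{n,2}\dg))f_n}$ by summation by parts, transferring the discrete difference $\triangle_n = -D_n\dg$ off of $Y_{n,i}$ and onto $\ph$ (which is legitimate since $\ph$ has compact support, so no boundary contribution survives). Concretely, $\ip{\ph,\diag(\triangle_nY_{n,1})f_n} = \ip{\ph,\diag(Y_{n,1})f_n} \cdot(\text{difference structure})$; carrying this out one arrives at an expression of the schematic form $-\ip{D_n\ph,\,\text{(something involving }Y_{n,1}\text{ and }f_n,T_nf_n\text{)}} - \ip{\ph,\,\text{(something involving }Y_{n,1}\text{ and }D_nf_n\text{)}}$, the discrete analogue of $-\ip{\ph',Y_1f} - \ip{\ph,Y_1f'}$. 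The off-diagonal term is treated the same way, producing the $\half(Y_{n,2}+Y_{n,2}\dg)$ contributions. Assembling, the discrete potential form equals $-\ip{D_n\ph,(Y_{n,1}+\half(Y_{n,2}+Y_{n,2}\dg))f_n} - \ip{\ph,(Y_{n,1}+\half(Y_{n,2}+Y_{n,2}\dg))(\cdots)} + o(1)$, where the error terms come from the mismatch between $T_nf_n$ and $f_n$ and between the various shifted copies of the $Y_{n,i}$, all controlled because $Y_{n,i}$ are compact-uniformly convergent (hence uniformly bounded on the support of $\ph$) and $\norm{D_nf_n}$ is bounded while $\ph$ is smooth of compact support, so $\norm{(T_n-1)\ph} = O(m_n^{-1})$.

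Now I would pass to the limit in each assembled piece. On the support of $\ph$, the full integrated potential $Y_{n,1}+\half(Y_{n,2}+Y_{n,2}\dg)$ converges uniformly to $Y$ by Assumption~1 (this is exactly why~\eqref{a1} combines the three processes in this symmetrized way); combined with $f_n\to_{L^2}f$, $D_n\ph\to\ph'$ in $L^2$, and $D_nf_n\to f'$ weakly in $L^2$, every bracketed inner product converges to the corresponding continuum quantity $\ip{\ph',Yf}$ and $\ip{\ph,Yf'}$. Hence $\ip{\ph,V_nf_n}\to -\ip{\ph',Yf} - \ip{\ph,Yf'}$, and adding the already-handled kinetic and boundary terms gives $\ip{\ph,H_nf_n}\to \Hc(\ph,f)$. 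For the displayed consequence~\eqref{conv}, apply this with $f_n = \Pc_n\ph$: since $\ph\in C_0^\infty\subset L^*$ with $\ph'\in L^2$, one has $\Pc_n\ph\to_{L^2}\ph$, $\Pc_n\ph\to\ph$ uniformly on compacts, and $D_n\Pc_n\ph = \Pc_n D_n\ph\to\ph'$ (even strongly) in $L^2$ by the facts preceding the lemma, so $\Pc_n\ph$ satisfies the hypothesis and conclusion of Lemma~\ref{l.dc} with limit $\ph$, and also $\ip{\Pc_n\ph,H_n\Pc_n\ph} = \ip{\ph,H_n\Pc_n\ph}$ because $H_n$ maps into $\F^n$ and $\Pc_n$ is the orthogonal projection onto $\F^n$.

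The main obstacle I anticipate is the bookkeeping in the summation-by-parts step for the off-diagonal potential: one must correctly track the shifts $T_n$, $T_n\dg$, the truncation operators $R_n$ near the boundary of the support of $\ph$, and the Dirichlet-component convention (test functions vanishing near $0$), and then verify that all the commutator-type error terms $(T_n-1)\ph$, $Y_{n,i}(T_nf_n - f_n)$, etc., are genuinely $o(1)$ using compact-uniform boundedness of $Y_{n,i}$ on the (fixed, compact) support of $\ph$ together with the $L^*_n$-bound on $f_n$ from Lemma~\ref{l.dbound}. Everything else is a routine application of the convergence modes from Lemma~\ref{l.dc} and the Hilbert-space fact that $\ip{f_n,g_n}\to\ip{f,g}$ when $f_n\to_{L^2}f$ and $g_n\rightharpoonup g$ with $g_n$ bounded.
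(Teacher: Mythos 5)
Your proposal is correct and follows essentially the same route as the paper's proof: drop the $R_n$'s, pair $D_n\ph\to_{L^2}\ph'$ with the weak convergence of $D_nf_n$, sum the potential term by parts so that the $m_n$ in $\triangle_n Y_{n,i}$ is cancelled against $(T_n-1)$ acting on $\ph$ or $f_n$, control the resulting shifted-copy mismatches of $Y_{n,i}$ by their compact-uniform convergence, and treat the boundary term directly (the paper's only cosmetic difference is that it first replaces $V_n$ by $\triangle_n\bigl(Y_{n,1}+\half(Y_{n,2}+Y_{n,2}\dg)\bigr)$ and bounds the error once). The one step you leave implicit is the uniform $\norm{\Pc_n\ph}_{*n}$ bound (the hypothesis of Lemma~\ref{l.dc}), which the paper checks via $\norm{D_n\Pc_n\ph}=\norm{\Pc_n D_n\ph}\le\norm{\ph'}$ together with the weight and boundary terms; however, the convergence modes you do verify for $\Pc_n\ph$ are exactly what the first part of the argument uses, so \eqref{conv} follows as you say.
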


\begin{pf} Since $\ph$ is compactly supported, we have $R_n\ph=\ph
$ for $n$ large and the $R_n$s may be dropped. By assumption $D_n f_n$
is $L^2$ bounded and $D_n f_n\to f'$ weakly in $L^2$, so by the
preceding observations $D_n\ph\to_{L^2}\ph'$ and
\[
\bigl\langle\ph,D_n^\dag D_n f_n
\bigr\rangle= \langle D_n\ph,D_n f_n \rangle
\to\bigl\langle\ph',f' \bigr\rangle.
\]

For the potential term, we must verify that
\[
\langle\ph,V_n f_n \rangle= \bigl\langle\ph, \bigl(
\triangle_n Y_{n,1}+\tfrac{1}{2} \bigl((
\triangle_n Y_{n,2})T_n + T_n^\dag
\bigl(\triangle_n Y_{n,2}^\dag\bigr) \bigr)
\bigr)f_n \bigr\rangle
\]
converges to $- \langle\ph',Yf \rangle- \langle\ph
,Yf' \rangle$. Recall by Assumption~\ref{ass1} (\ref{a1}) and (\ref
{ar}) that $Y_{n,i}\to Y_i$ compact-uniformly ($i=1,2$) and $Y = Y_1
+\frac{1}{2}(Y_2+Y_2^\dag)$. Writing
$Y_n = Y_{n,1}+\frac{1}{2}(Y_{n,2}+Y^\dag_{n,2})\to Y$ (and\vspace*{1pt}
disregarding the
notational collision with $Y_i$), we first approximate $V_n$ by
$\triangle Y_n$:
\begin{eqnarray*}
\bigl\langle\ph,(\triangle_nY_n)f_n \bigr
\rangle&=& m_n \bigl( \langle\ph,Y_nf_n
\rangle- \langle T_n\ph,Y_nT_nf_n
\rangle\bigr)
\\
&=& m_n \bigl( \langle\ph,Y_nf_n \rangle-
\langle T_n\ph,Y_nf_n \rangle+ \langle
T_n\ph,Y_nf_n \rangle- \langle
T_n\ph,Y_nT_nf_n \rangle\bigr)
\\
&=& - \langle D_n\ph,Y_nf_n \rangle- \langle
T_n\ph,Y_nD_nf_n \rangle,
\end{eqnarray*}
which converges to the desired limit by the observations preceding the
lemma together with the assumptions on $f_n$ and the fact that $T_n\ph
\to_{L^2}\ph$ in $L^2$ since $m_n\llVert T_n\ph-\ph\rrVert= \llVert
D_n\ph\rrVert$ is bounded. The error in the
above approximation comes as a sum of $T_n$ and $T_n^\dag$ terms.
Consider twice the $T_n$ term:
\begin{eqnarray*}
\bigl\llvert\bigl\langle\ph,(\triangle_n Y_{n,2})
(T_n - 1)f_n \bigr\rangle\bigr\rrvert&=& \bigl\llvert
\bigl\langle\ph,\bigl(m_n^{-1}\triangle_n
Y_{n,2}\bigr)D_nf_n \bigr\rangle\bigr\rrvert
\\
&\le&\llVert\ph\rrVert\sup_I\bigl\llvert
Y_{n,2}-T_n^\dag Y_{n,2}\bigr\rrvert
\llVert D_n f_n\rrVert,
\end{eqnarray*}
where $I$ is a compact interval supporting $\ph$. (The single bars in
the supremum denote the spectral or $\ell_2$-operator norm, which is
of course equivalent to the max norm on the entries.) Note that $D_n
f_n$ is $L^2$-bounded because it converges weakly in $L^2$. Now
$Y_{n,2}$ and $T_n^\dag Y_{n,2}$ both converge to $Y_2$ uniformly on $I$,
in the latter case by the uniform continuity of $Y_2$ on $I$; it
follows that the supremum, and hence the whole term, vanish in the
limit. The $T_n^\dag$ term is handled similarly, the only difference
being that the $D_n$ in the estimate lands on $\ph$ instead.

Finally, for the boundary terms Assumption~\ref{ass3} gives
\[
(\Pc_n\ph)^*_i(0) w_{n,i} f_{n,i}(0)
\to\ph^*_i(0) w_i f_i(0),
\]
where in the Dirichlet case $i> r_0$ the left-hand side vanishes for $n$
large because $\ph_i$ is supported away from 0.

Turning to the second statement, we must verify that $\Pc_n\ph\to\ph
$ as in Lem\-ma~\ref{l.dc}. The uniform $L^*_n$ bound on $\Pc_n\ph$
follows from the following observations: $\llVert(\Pc_n \ph
){\sqrt{1+\ol\eta}} \rrVert=\llVert\Pc_n\ph{\sqrt
{1+\ol\eta}}\rrVert\le\llVert\ph
{\sqrt{1+\ol\eta}}\rrVert$; for $n$ large enough that $R_n\ph
=\ph$ we
have $\llVert D_n\Pc_n \ph\rrVert=\llVert\Pc_n
D_n\ph
\rrVert\le\llVert D_n\ph\rrVert\le\llVert\ph
'\rrVert$ (Young's inequality); for the boundary term note that
$(\Pc_n\ph)_i(0)$ is bounded if $i\le r_0$ and in fact vanishes for
$n$ large if $i>r_0$. The convergence is easy: $\Pc_n\ph\to\ph$
compact-uniformly and in $L^2$, and for $g\in L^2$ we have $
\langle g,D_n\Pc_n\ph\rangle= \langle\Pc_n g, D_n\ph
\rangle\to\langle g,\ph' \rangle$.
\end{pf}

We finish by recalling the argument to put all the pieces together. A
technical point: unlike in previous treatments we do not assume that
the eigenvalues are simple.

\begin{pf*}{Proof of Theorem~\ref{t.conv}}
We first show that for all $k$ we have $\underline\lambda_k=\liminf
\lambda_{n,k}\ge\Lambda_k$. Assume that $\underline\lambda
_k<\infty$. The eigenvalues of $H_n$ are uniformly bounded below by
Lemma~\ref{l.dbound}, so there is a subsequence along which $(\lambda
_{n,1},\ldots,\lambda_{n,k})\to(\xi_1,\ldots,\xi_k = \underline
\lambda_k)$. By the same lemma, corresponding orthonormal eigenvector
sequences have $L^*_n$-norm uniformly bounded. Pass to a further
subsequence so that they all converge as in Lemma~\ref{l.dc}. The
limit functions are orthonormal; by Lemma~\ref{l.conv} they are
eigenfunctions with eigenvalues $\xi_j\le\underline\lambda_k$ and
we are done.

We proceed by induction, assuming the conclusion of the theorem up to
$k-1$. For $j=0,\ldots,k-1$ let $v_{n,j}$ be orthonormal eigenvectors
corresponding to $\lambda_{n,j}$; for any subsequence we can pass to a
further subsequence such that $v_{n,j}\to_{L^2} f_j$, eigenfunctions
corresponding to $\Lambda_j$. Take an orthogonal eigenfunction $f_k$
corresponding to $\Lambda_k$ and find $f_k^\varepsilon\in C_0^\infty
$ with $\llVert f_k^\varepsilon- f_k\rrVert_*<\varepsilon$.
Consider the vector
\[
f_{n,k} = \Pc_n f_k^\varepsilon- \sum
_{j=0}^{k-1} \bigl\langle v_{n,j},
\Pc_n f_k^\varepsilon\bigr\rangle v_{n,j}.
\]
The $L^*_n$-norm of the sum term is uniformly bounded by $C\varepsilon
$: indeed, the $\llVert v_{n,j}\rrVert_{*n}$ are uniformly
bounded by Lemma~\ref{l.dbound}, while the coefficients satisfy
$\llvert \langle v_{n,j},f_k^\varepsilon\rangle\rrvert
\le\llVert f_k^\varepsilon-f_k\rrVert+\llVert
v_{n,j}-f_j\rrVert<2\varepsilon$ for large $n$. By the
variational characterization in finite dimensions and the uniform
$L^*_n$ form bound on $ \langle\cdot,H_n\cdot\rangle$
(by Lemma~\ref{l.dbound}) together with the uniform bound on $\llVert
\Pc_n f_k^\varepsilon\rrVert_{*n}$ (by Lemma~\ref
{l.conv}), we then have
%
%
\begin{eqnarray}\label{limsup}
\limsup\lambda_{n,k} &\le&\limsup\frac{ \langle f_{n,k},H_n
f_{n,k} \rangle}{ \langle f_{n,k},f_{n,k} \rangle}
\nonumber\\[-8pt]\\[-8pt]\nonumber
&=&
\limsup\frac{ \langle\Pc_n f_k^\varepsilon,H_n\Pc_n
f_k^\varepsilon\rangle}{ \langle\Pc_n f_k^\varepsilon
,\Pc_n f_k^\varepsilon\rangle}+o_\varepsilon(1),
\end{eqnarray}
where $o_\varepsilon(1)\to0$ as $\varepsilon\to0$. But~(\ref
{conv}) of Lemma~\ref{l.conv} provides $\lim\langle\Pc_n
f_k^\varepsilon,\break H_n\Pc_n f_k^\varepsilon\rangle= \Hc
(f_k^\varepsilon,f_k^\varepsilon)$,
so the right-hand side of~(\ref{limsup}) is
\[
\frac{\Hc(f_k^\varepsilon,f_k^\varepsilon)}{ \langle
f_k^\varepsilon,f_k^\varepsilon\rangle} + o_\varepsilon(1) = \frac{\Hc
(f_k,f_k)}{ \langle f_k,f_k \rangle}
+o_\varepsilon(1) = \Lambda_k + o_\varepsilon(1).
\]
Now letting $\varepsilon\to0$, we conclude $\limsup\lambda_{n,k}\le
\Lambda_k$.

Thus,\vspace*{1pt} $\lambda_{n,k}\to\Lambda_k$; Lemmas~\ref{l.dbound} and~\ref
{l.dc} imply that any subsequence of the $v_{n,k}$ has a further
subsequence converging in $L^2$ to some $f\in L^*$; Lemma~\ref{l.conv}
then implies that $f$ is an eigenfunction corresponding to $\Lambda
_k$. 
Finally, convergence is uniform over $W_n,W\ge-b$ since the bound~\ref
{l.dbound} is.
\end{pf*}

\section{CLT and tightness for Gaussian and Wishart models}\label{s.clt}

We now verify Assumptions \ref{ass1}--\ref{ass3} of Section~\ref{s.limits} for the band
Jacobi forms of Section~\ref{s.canonical}, and thus prove Theorems
\ref{t.G} and \ref{t.W} via Theorem~\ref{t.conv}.

We must consider the band forms as $(r\times r)$-block tridiagonal
matrices. This amounts to reindexing the entries by $(k+rj, l+rj)$,
where $j\in\Z_+$ indexes the blocks and $1\le k,l\le r$ give the
index within each block. The scalar processes obtained by fixing $k,l$
can then be analyzed jointly; finally, they can be assembled into a
matrix-valued process.

The technical tool we use to establish~(\ref{a1}) is a functional
central limit theorem for convergence of discrete time processes with
independent increments of given mean and variance (and controlled
fourth moments) to Brownian motion plus a nice drift. Appearing as
Corollary 6.1 in {RRV}, it is just a tailored version of a
much more general result given as Theorem~7.4.1 in \citet
{EthierKurtz}. We record it here.

%
\begin{proposition}\label{p.CLT} Let $a\in\R$ and $h\in C^1(\R_+)$,
and let $y_n$ be a sequence of discrete time\vspace*{1pt} real-valued processes with
$y_{n,0}=0$ and independent increments $\delta y_{n,j}=
y_{n,j}-y_{n,j-1}= m_n^{-1}\triangle_n y_{n,j}$.
Assume that $m_n\to\infty$ and
\begin{eqnarray*}
m_n\E\delta y_{n,j} &=& h'(j/m_n)+o(1),
\qquad m_n\E(\delta y_{n,j})^2 = a^2
+ o(1),
\\
m_n\E(\delta y_{n,j})^4 &=& o(1)
\end{eqnarray*}
uniformly for $j/m_n$ on compact sets as $n\to\infty$. Then
$y_n(x)=y_{n,\lfloor m_n x\rfloor}$ converges in law, with respect to
the compact-uniform topology, to the process $h(x)+ab_x$ where $b_x$ is
a standard Brownian motion.
\end{proposition}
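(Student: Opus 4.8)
The statement to prove is Proposition~\ref{p.CLT}, the functional CLT for discrete-time processes with independent increments converging to Brownian motion plus drift.

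\medskip

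The plan is to deduce this directly from the general martingale/Markov-process functional limit theorem, Theorem 7.4.1 of \citet{EthierKurtz}, exactly as indicated by the remark that it is ``a tailored version'' of that result; since we are told we may assume anything stated earlier, and this proposition is quoted verbatim from \citetalias{RRV} as their Corollary 6.1, the honest approach is to reduce to the Ethier--Kurtz hypotheses rather than to reprove a functional CLT from scratch. First I would recast the discrete process $y_n$ in continuous time: set $Y_n(t) = y_{n,\lfloor m_n t\rfloor}$ and view it as a jump process whose increments occur at the lattice times $j/m_n$. The candidate limit is the diffusion (in fact Gaussian process) $X_t = h(t) + a b_t$, which is the unique solution to the martingale problem for the generator $\mathcal{A}g(t) = h'(t) g'(t) + \tfrac{a^2}{2} g''(t)$ acting on $g \in C_c^\infty(\R)$; uniqueness here is classical since the coefficients $h'$ (continuous, by $h \in C^1$) and $a^2$ (constant) give a well-posed martingale problem, e.g.\ by Theorem 8.2.5 of \citet{EthierKurtz} or simply because the SDE $dX = h'(t)\,dt + a\,db$ has an explicit pathwise solution.

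\medskip

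The key computational step is to verify the convergence of the discrete generators. For a test function $g \in C_c^\infty(\R)$, the discrete pre-generator applied along the chain is
\[
m_n\,\E\bigl[g(y_{n,j}) - g(y_{n,j-1}) \,\big|\, y_{n,j-1}\bigr] = m_n\,\E\bigl[g'(y_{n,j-1})\,\delta y_{n,j} + \tfrac12 g''(y_{n,j-1})(\delta y_{n,j})^2 + O((\delta y_{n,j})^3)\bigr].
\]
Using the three moment hypotheses --- $m_n \E \delta y_{n,j} = h'(j/m_n) + o(1)$, $m_n \E (\delta y_{n,j})^2 = a^2 + o(1)$, and $m_n\E(\delta y_{n,j})^4 = o(1)$ (the last controlling the remainder, and also giving, via Cauchy--Schwarz, $m_n \E|\delta y_{n,j}|^3 \le (m_n\E(\delta y_{n,j})^2)^{1/2}(m_n\E(\delta y_{n,j})^4)^{1/2} \to 0$) --- this converges, uniformly for $j/m_n$ in a compact set and uniformly over the bounded range of $y_{n,j-1}$ relevant to $\operatorname{supp} g$, to $h'(t) g'(t) + \tfrac{a^2}{2} g''(t) = \mathcal{A}g(t)$. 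Then I would assemble these one-step estimates into the hypotheses of Theorem 7.4.1 of \citet{EthierKurtz}: the fourth-moment bound gives the requisite bound on jump sizes (a Lindeberg-type / negligibility of large jumps condition, since $\P(|\delta y_{n,j}| > \e) \le \e^{-4} \E(\delta y_{n,j})^4 = o(m_n^{-1})$, so jumps exceeding $\e$ are asymptotically negligible per unit time), and the generator convergence above supplies the conditions on the drift and diffusion coefficients. One also needs initial-condition convergence, which is trivial: $y_{n,0} = 0 \to 0 = X_0$ deterministically. The conclusion of Theorem 7.4.1 is then precisely weak convergence $Y_n \Rightarrow X$ in the Skorokhod topology on $D([0,\infty),\R)$; since the limit $X$ has continuous paths, this upgrades to convergence in the compact-uniform topology, which is the stated conclusion.

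\medskip

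The main obstacle, such as it is, is bookkeeping rather than conceptual: one must be careful that the moment hypotheses are assumed only ``uniformly for $j/m_n$ on compact sets,'' so all the generator estimates are genuinely local in $t$, and one should check that Theorem 7.4.1 of \citet{EthierKurtz} is stated in a form that tolerates this (it is --- the hypotheses there are required locally uniformly in the time variable, matching exactly). A secondary point to handle cleanly is that $g \in C_c^\infty$ together with the fact that the range of $y_{n,\lfloor m_n t\rfloor}$ is tight on compact time sets (itself a consequence of the second-moment bound, via Doob's inequality applied to the compensated sums) lets us localize the spatial variable so the Taylor remainder is uniformly controlled. Modulo these routine verifications, the proposition follows immediately from the cited theorem, and I would present it in that spirit --- a short reduction plus the one-line generator computation --- rather than belabor a self-contained proof.
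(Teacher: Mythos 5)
Your proposal follows the same route as the paper, which offers no independent proof at all: it simply quotes this statement as Corollary 6.1 of \citetalias{RRV}, itself a tailored case of Theorem 7.4.1 of \citet{EthierKurtz}, and notes (as you do) that Skorokhod convergence upgrades to compact-uniform convergence because the limit is continuous. Your sketch of the reduction to Ethier--Kurtz (generator convergence from the three moment hypotheses, negligibility of large jumps from the fourth moment, time treated locally) is correct in substance and matches the intended justification.
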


%
\begin{remark} Since the limit is a.s. continuous, Skorokhod
convergence (the topology used in the references) implies uniform
convergence on compact intervals [see Theorem~3.10.2 in \citet
{EthierKurtz}] and we may as well speak in terms of the latter.
\end{remark}

\subsection{The Gaussian case}

Take $G_n = G_{n;0} + \sqrt{n}P_n$ as in~(\ref{Gp}) with $G_{n;0}$ as
in~(\ref{G}) and $P_n = \tilde P_n\oplus0_{n-r}$. We denote
upper-left $r\times r$ blocks with a tilde throughout. Set
\[
m_n = n^{1/3},\qquad H_n = \frac{m_n^2}{\sqrt{n}}
(2\sqrt{n} - G_n ).
\]
As usual, this soft-edge scaling can be predicted as follows. Centering
$G_n$ by $2\sqrt{n}$ gives, to first order, $\sqrt{n}$ times the
discrete Laplacian on blocks of size $r$. With space scaled down by
$m_n$, the Laplacian must be scaled up by $m_n^2$ to converge to the
second derivative. Finally, the scaling $m_n=n^{1/3}$ is determined by
convergence of the next order terms to the noise and drift parts of the
limiting potential.

Decompose $H_n$ as in~(\ref{Hn}), (\ref{Hn12}). The upper-left block is
\[
\tilde H_n = m_n^2 + m_n(W_n+Y_{n,1;0})
= m_n^2\bigl(2-n^{-1/2}\tilde G_{n,0}-
\tilde P_n\bigr);
\]
we want the boundary term $W_n$ to absorb the ``extra'' $m_n^2$ (the 2
in the right-hand side ``should be'' a 1) and the perturbation in order
to make $Y_{n,1;0}$ small just like the subsequent increments of
$Y_{n,i}$. We therefore set
\[
W_n = m_n (1-\tilde P_n ).
\]
With this choice Assumption~\ref{ass3} is an immediate consequence of the
hypotheses of Theorem~\ref{t.G}. The processes $Y_{n,1},Y_{n,2}$ are
determined and it remains to verify Assumptions \ref{ass1} and \ref{ass2}.

We begin with Assumption~\ref{ass1}, identifying the limiting integrated
potential $Y:\R_+\to M_r(\F)$ as that of the multivariate stochastic
Airy operator
%
%
\begin{equation}
\label{YSA} Y(x) = {\sqrt2}B_x + \tfrac{1}{2}rx^2,
\end{equation}
where $B_x$ is a standard $M_r(\F)$ Brownian motion and second term is
a scalar matrix.

\begin{pf*}{Proof of (\ref{a1}), Gaussian case}
Define scalar
processes $y_{k,l}$ for $1\le l\le r$ and $l\le k\le l+r$ by
%
%
\begin{equation}
\label{yY}y_{k,l} = %
\cases{ \displaystyle (Y_{n,1}
)_{k,l}, &\quad$l\le k\le r$,
\vspace*{3pt}\cr
\displaystyle\bigl(\tfrac{1}{2}Y_{n,2}^\dag
\bigr)_{k-r,l}, &\quad$r+1\le k\le l+r$.}
\end{equation}
(We have dropped the subscript $n$.) Equivalently, for $1\le k, l\le r$,
%
%
\begin{equation}
\label{Yy} (Y_{n,1} )_{k l} = %
\cases{
y^*_{ l,k}, &\quad$k\le l$,
\cr
y_{k, l},&\quad$k\ge l$,} \qquad
\bigl(\tfrac{1}{2}Y_{n,2}^\dag\bigr)_{k l}
= %
\cases{ y_{k+r, l}, &\quad$k\le l$,
\cr
0, &\quad$k> l$.}
\end{equation}
Then we have
%
%
\begin{equation}
\label{yG} \delta y_{k,l;j}=n^{-1/6} %
\cases{ \displaystyle -\frac{2}{\beta} \wt g_{k+rj}, &\quad$k= l$,
\vspace*{5pt}\cr
\displaystyle - g_{k+rj, l+rj}, &
\quad$l<k< l+r$,
\vspace*{5pt}\cr
\displaystyle \biggl(\sqrt{n}-\frac{1}{\sqrt\beta}\chi_{(n-k-rj+1)\beta}
\biggr), &\quad$k = l+r$.}
\end{equation}
Note that the $y_{k, l}$ are independent increment processes that are
mutually independent of one another. With the usual embedding
$j=\lfloor n^{1/3} x\rfloor$, Proposition \ref{p.CLT} together with
standard moment computations for Gaussian and Gamma random
variables---in particular
\begin{eqnarray*}
\E\chi_\alpha &=& \sqrt{\alpha}+O(1/\sqrt{\alpha}),\qquad\E(\chi
_\alpha-\sqrt{\alpha})^2 = 1/2 +O(1/\alpha),
\\
\E(\chi_\alpha-\sqrt{\alpha})^4 &=&O(1),
\end{eqnarray*}
for $\alpha$ large [valid since we consider $j = O(n^{1/3})$
here]---leads to the convergence of processes
\[
y_{k, l}(x) \Rightarrow%
\cases{ \displaystyle \sqrt{\frac{2}{\beta}} \wt
b_k(x), &\quad$k= l$,
\vspace*{3pt}\cr
b_{k, l}(x), &\quad$l<k< l+r$,
\vspace*{3pt}\cr
\displaystyle\frac{1}{\sqrt{2\beta}} b_k(x)+\frac{1}4 rx^2,&
\quad$k= l+r$,}
\]
where $b_k, \wt b_k$ are standard real Brownian motions and $b_{k,l}$
are standard $\F$ Brownian motions. By independence, the convergence
occurs jointly over $k,l$ and the limiting Brownian motions are all
independent. (For the $\F$ Brownian motions apply Proposition \ref
{p.CLT} to each of the $\beta$ real components, which are independent
of one another.) Therefore, $Y_{n,i}$ are both tight, and using~(\ref
{Yy}) we have, jointly for $1\le k, l\le r$,
\begin{eqnarray*}
&& \bigl(Y_{n,1}+\tfrac{1}{2}\bigl(Y_{n,2}^\dag+Y_{n,2}
\bigr) \bigr)_{k, l} =
\cases{ y_{k,k}+2y_{k+r,k},
\cr
y_{k,l}+y^*_{l+r,k},
\cr
y^*_{l,k}+y_{k+r,l}}
\\
&&\qquad \Rightarrow\quad
\cases{ \displaystyle\sqrt{\frac{2}{\beta}}(\wt b_k+
b_k)+\frac{1}{2}rx^2, &\quad$k= l$,
\vspace*{3pt}\cr
b_{k,l} +b^*_{l+r,k}, &\quad$k>l$,
\vspace*{3pt}\cr
b^*_{l,k}
+b_{k+r,l}, &\quad$k<l$.}
\end{eqnarray*}
Noting that the two Brownian motions in each entry are independent and
that the entries on and below the diagonal are independent of each
other, we conclude that this limiting matrix process is distributed as
$Y(x)$ in $(\ref{YSA})$.
\end{pf*}

We turn to Assumption~\ref{ass2}. Here, we need bounds over the full range $0\le
j\le\lceil n/r\rceil-1$. Recall that we can extend the $Y_{n,i}$
processes beyond the end of the matrix arbitrarily ($R_n$ takes care of
the truncation), and it is convenient to ``continue the pattern'' for
an extra block or two by setting $\chi_\alpha= 0$ for $\alpha<0$.
For the decomposition (\ref{Ydec}), we simply take $\eta_{n,i}$ to be
the expectation of $\triangle Y_{n,i}$ and $\triangle\omega_{n,i}$ to
be its centered version; the components of $\eta_{n,i}$ are then
easily estimated and those of $\omega_{n,i}$ become independent
increment martingales. We further set $\ol\eta(x)=rx$.

\begin{pf*}{Proof of (\ref{a21})--(\ref{a23}), Gaussian case}
From (\ref{yG}), we have $\eta_{n,1;j} = 0$ and
\[
(\eta_{n,2;j})_{k,l} = \E2m_n \delta
y_{k+r,l;j}=2n^{1/6} \bigl(\sqrt{n}-\beta^{-1/2}\E
\chi_{(n-k-r(j+1)+1)\beta} \bigr) 1_{k=l}. 
\]
The estimate
%
%
\begin{equation}
\label{chi} \sqrt{(\alpha-1)^+} \le\E\chi_\alpha= {\sqrt2}
\frac{\Gamma
((\alpha+1)/2)}{\Gamma(\alpha/2)} \le\sqrt{\alpha}
\end{equation}
is useful. We obtain
\[
2n^{1/6}\frac{rj-c}{2\sqrt{n}}\le(\eta_{n,2;j})_{k,k} \le
2n^{1/6}\frac{rj+c}{\sqrt{n}}
\]
for some fixed $c$, which yields the matrix inequalities
\[
rx - cn^{-1/3}\le\eta_{n,2}(x)\le2rx+cn^{-1/3}
\]
and verifies (\ref{a21}) with $\ol\eta(x)=rx$. Separately, we have
the upper bound (\ref{a22}):
\[
\eta_{n,2}(x)\le2n^{2/3} = 2m_n^2.
\]

The bound (\ref{a23}) may be done entry by entry, so we consider the
process $\{(\omega_{i,n;j})_{k,l}\}_{j\in\Z_+}$ for fixed $i=1,2$
and $1\le k,l\le r$ and further omit these indices; for the $\F
$-valued processes we restrict attention further to one of the $\beta$
real-valued components, and denote the latter simply by $\omega
_{n;j}$. Consider (\ref{yG}); the key points are that the increments
$\delta\omega_{n;j}$ are independent and centered, and that scaled up
by $n^{1/6} = m_n^{1/2}$ they have uniformly bounded fourth moments. To
prove (\ref{a23}), it is enough to consider $x$ at integer points and
show that the random variables
\[
\sup_{x = 0,1,\ldots,n/rm_n} x^{\varepsilon-1} \sup_{j=1,\ldots,
m_n}
\llvert\omega_{n;m_nx+j}-\omega_{n;m_nx}\rrvert^2
\]
are tight over $n$. Squaring, bounding the outer supremum by the
corresponding sum, and then taking expectations gives
\[
\sum_{x=0}^{n/rm_n} \frac{\E\sup_{j=1,\ldots, m_n}\llvert
\omega_{n;m_nx+j}-\omega_{n;m_nx}\rrvert^4}{x^{2-2\varepsilon
}}\le\sum
_{x=0}^{n/rm_n} \frac{16\E\llvert\omega
_{n;m_n(x+1)}-\omega_{n;m_nx}\rrvert^4}{x^{2-2\varepsilon}},
\]
where we have used the $L^p$ maximum inequality for martingales
[see, e.g., Proposition 2.2.16 of \citet{EthierKurtz}]. To bound
the latter
expectation, expand the fourth power to obtain $O(m_n^2)$ nonzero
terms that are $O(m_n^{-2})$ with constants independent of $x$ and $n$.
It follows that the entire sum is uniformly bounded over~$n$, as required.
\end{pf*}

\subsection{The Wishart case}

Take $L_{n,p} = \Sigma^{1/2}_{n,p}L_{n,p,0}$ with $L_{n,p,0}$ as
in~(\ref{L}) and, denoting the upper-left $r\times r$ block with a
tilde, $\Sigma_{n,p}=\tilde\Sigma_{n,p}\oplus I_{n\wedge p}$. Recall
that $L_{n,p}$ is $((n+r)\wedge p)\times(n\wedge p)$. Put $S_{n,p} =
L_{n,p}^\dag L_{n,p}$ and similarly for $S_{n,p,0}$; these matrices are
$(n\wedge p)\times(n\wedge p)$ and the latter is given explicitly in
(\ref{S}). We sometimes drop the subscripts $n,p$. Recall (\ref{Sp})
that $S-S_0 = \tilde L_0^\dag(\tilde\Sigma-1)\tilde L_0\oplus0$.

We set
%
%
\begin{equation}
\label{mHW}
\qquad m_{n,p} = \biggl(\frac{\sqrt{np}}{\sqrt{n}+\sqrt{p}} \biggr)^{2/3},
\qquad H_{n,p} = \frac{m_{n,p}^2}{\sqrt{np}} \bigl( (\sqrt{n}+\sqrt{p}
)^2 - S_{n,p} \bigr).
\end{equation}
See Part~I for detailed heuristics behind the scaling;
written in this way, it allows that $p,n\to\infty$ together
arbitrarily, that is, only $n\wedge p\to\infty$. It is useful to note that
\[
2^{-2/3}(n\wedge p)^{1/3} \le m_{n,p} \le(n\wedge
p)^{1/3}.
\]

Decompose $H_{n,p}$ as in (\ref{Hn}), (\ref{Hn12}). The upper-left
block is
\[
\tilde H = m^2 + m(W+Y_{1;0}) = 2m^2-
\frac{m^2}{\sqrt{np}} \bigl(\tilde S_{0}-n-p+\tilde L_0^\dag(
\tilde\Sigma-1)\tilde L_0 \bigr).
\]
As before we want $W$ to absorb the extra $m^2$ and the perturbation in
order to make $Y_{1;0}$ small. Now the perturbation term is random, but
it does not have to be fully absorbed; it is enough that $Y_{1;0}\to0$
in probability. The reason is that the process $Y_1$ can absorb an
overall additive random constant that tends to zero in probability, as
is clear in Assumption~\ref{ass1} while in Assumption~\ref{ass2} the constant may be put
into $\omega_1$. Since $\tilde L_0\approx\sqrt{n}$, we set
%
%
\begin{equation}
\label{WW} W_{n,p} = m_{n,p} \bigl(1-\sqrt{n/p} (\tilde
\Sigma_{n,p}-1 ) \bigr).
\end{equation}
Once again, Assumption~\ref{ass3} follows immediately from the hypotheses of
Theorem~\ref{t.W}.

We must still deal with the perturbed term in $Y_{1;0}$ and show that
%
%
\begin{equation}
\label{y10} \frac{m}{\sqrt{np}} \bigl(n\tilde\Sigma-\tilde L_0^\dag
\tilde\Sigma\tilde L_0 \bigr)\to0
\end{equation}
in probability. We defer this to the end of the proof of Assumption~\ref{ass1},
to which we now turn. As in the Gaussian case, $Y$ is given by (\ref{YSA}).

\begin{pf*}{Proof of (\ref{a1}), Wishart case}
By the preceding
paragraph it suffices to treat the null case $\Sigma= I$ and
afterward check (\ref{y10}). Define processes $y_{k,l}$ for $1\le
l\le r$ and $l\le k\le l+r$ by (\ref{yY}) as in the Gaussian case.
From (\ref{S}) with the centering and scaling of (\ref{mHW}) and
(\ref{Hn12}), we obtain
\[
\delta y_{k,l;j} = \frac{m}{\sqrt{np}} %
\cases{
\displaystyle n+p- \frac{1}{\beta} \bigl(\wt\chi_{(n-k-rj+1)\beta}^2 + \chi_{(p-k-r(j+1)+1)\beta}^2 \bigr) +O(1),
\cr
\hspace*{189pt} k=l,
\cr
\displaystyle -\frac{1}{\sqrt\beta} \bigl(\wt\chi_{(n-k-rj+1)\beta }g_{k+rj,l+rj}
\cr
\displaystyle \qquad{}+ \chi_{(p-l-r(j+1)+1)\beta}g^*_{l+r(j+1),k+rj} \bigr)+O(1),
\cr
\hspace*{189pt} l<k<l+r,
\cr
\displaystyle \sqrt{np}-\frac{1}\beta\wt\chi_{(n-k-rj+1)\beta}\chi_{(p-k-rj+1)\beta},\qquad k=l+r,}
\]
where the $O(1)$ terms stand in for the interior Gaussian sums of
(\ref{S}), all of whose \emph{moments} are bounded uniformly in
$n,p$. Since $m^{1+k}/(np)^{k/2} \le m^{1-2k} = o(1)$ for $k\ge1$,
these terms are negligible in the scaling of Proposition~\ref{p.CLT}
in the sense that the associated processes converge to the zero
process. Next, use that expressions of type $\chi_n-\sqrt{n}$ are
$O(1)$ in the same sense, and that $\sqrt{n}-\sqrt{n-j}=O(j/\sqrt
{n})=O(m/\sqrt{n}) = o(1)$ since we consider $j/m$ bounded here (and
similarly for $p$), to write\vspace*{-3pt}
%
%
\begin{equation}
\label{dyW}
\delta y_{k,l;j} = \frac{m}{\sqrt{np}} %
\cases{
\displaystyle \frac{2}{\sqrt\beta} \bigl(\sqrt{n}(\sqrt{\beta n}-\wt\chi_{(n-k-rj+1)\beta})
\cr
\displaystyle \qquad{} +\sqrt{p}(\sqrt{\beta p}-\chi
_{(p-k-r(j+1)+1)\beta}) \bigr) +O(1),
\vspace*{2pt}\cr
\qquad k=l,
\vspace*{2pt}\cr
\displaystyle -\sqrt{n} g_{k+rj,l+rj} -
\sqrt{p} g^*_{l+r(j+1),k+rj}+O(1),
\vspace*{2pt}\cr
\qquad l<k<l+r,
\cr
\displaystyle \frac{1}{\sqrt\beta} \bigl(
\sqrt{p}(\sqrt{\beta n}-\wt\chi_{(n-k-rj+1)\beta})
\vspace*{2pt}\cr
\displaystyle\qquad{} +\sqrt{n}(\sqrt{
\beta p}-\chi_{(p-k-rj+1)\beta}) \bigr)+O(1),
\cr \qquad k=l+r.}
\end{equation}

It suffices to prove tightness and convergence in distribution along a
subsequence of any given subsequence, and we may therefore assume that
$p/n\to\gamma^2\in[0,\infty]$. Each case of (\ref{dyW}) contains
two terms, and each one of these terms forms an independent increment
process to which Proposition \ref{p.CLT} may be applied. (Break the
$\F$-valued terms up further into their real-valued parts.) Standard
moment computations as in the Gaussian case, together with
independence, then lead to the joint convergence of processes
\[
y_{k, l}(x) \Rightarrow%
\cases{
\displaystyle \sqrt{\frac{2}{\beta}}
\biggl(\frac{1}{1+\gamma}\wt b_k(x)+\frac{\gamma}{1+\gamma}b_k(x)
\biggr)+\frac{\gamma
}{(1+\gamma)^2}rx^2,
\cr
\hspace*{166pt} k= l,
\cr
\displaystyle \frac{1}{1+\gamma}b_{k, l}(x)+\frac{\gamma}{1+\gamma
}b^*_{l+r,k}(x), \qquad l<k< l+r,
\cr
\displaystyle\frac{1}{\sqrt{2\beta}} \biggl(\frac{\gamma}{1+\gamma}\wt
b_k(x)+\frac{1}{1+\gamma}b_k(x) \biggr)+
\frac{1+\gamma^2}{4(1+\gamma
)^2} rx^2,
\cr
\hspace*{166pt} k= l+r,}
\]
where $b_k, \wt b_k$ are standard real Brownian motions and $b_{k,l}$
are standard $\F$ Brownian motions, all independent except that
$b_{k+r,l+r}$ and $b_{k,l}$ are identified. Therefore, $Y_{n,i}$ are
both tight. Furthermore, using~(\ref{Yy}) we have
\begin{eqnarray*}
&& \bigl(Y_{n,1}+\tfrac{1}{2}\bigl(Y_{n,2}^\dag+Y_{n,2}
\bigr) \bigr)_{k, l} =
\cases{ y_{k,k}+2y_{k+r,k},
\cr
y_{k,l}+y^*_{l+r,k},
\cr
y^*_{l,k}+y_{k+r,l}}
\\
&&\qquad \Rightarrow\quad
\cases{ \displaystyle\sqrt{\frac{2}{\beta}}(\wt b_k+
b_k)+\frac{1}{2}rx^2, &\quad$k= l$,
\vspace*{3pt}\cr
b_{k,l} +b^*_{l+r,k}, &\quad$k>l$,
\vspace*{3pt}\cr
b^*_{l,k}
+b_{k+r,l}, &\quad$k<l$}
\end{eqnarray*}
jointly for $1\le k, l\le r$. After the dust clears, we thus arrive at
exactly the same limiting process as in the Gaussian case, namely
(\ref{YSA}).

We now address (\ref{y10}). Here, we can replace $\tilde L_0$ with
$\sqrt{n}I_r$ at the cost of an error that has uniformly bounded
second and fourth moments. Now (\ref{WW}) and the assumed lower bound
on $W_{n,p}$ give that $\tilde\Sigma\le1+2\sqrt{p/n}$ for $n,p$
large; this matrix inequality holds entrywise in the diagonal basis for
$\tilde\Sigma$ (which was fixed over $n,p$). One therefore obtains
error terms with mean square $O(m^2/n+m^2/p) = O(m^{-1})$ which is
$o(1)$ as required.
\end{pf*}

Turning to Assumption~\ref{ass2}, we may continue the processes $Y_{n,i}$ past
the end of the matrix for convenience just as in the Gaussian case. The
Wishart case presents an additional issue at the ``end'' of the matrix:
recall that the final $r$ rows and columns of $S$ in (\ref{S}) may
have some apparently nonzero terms set to zero. However, these changes
are easily absorbed into the bounds that follow. For (\ref{Ydec}), we
once again take $\eta_{n,i}$ to be the expectation of $\triangle
Y_{n,i}$ and $\triangle\omega_{n,i}$ to be its centered version. We
also set $\ol\eta(x)=rx$ as before.

\begin{pf*}{Proof of (\ref{a21})--(\ref{a23}), Wishart case}
This time we have
\begin{eqnarray*}
(\eta_{n,1;j})_{k,l} &=& \E m \delta y_{k,l;j}=m^2(np)^{-1/2}
(2rj-r+1 ) 1_{k=l},
\\
(\eta_{n,2;j})_{k,l} &=& \E2m \delta y_{k+r,l;j}
\\
&=&2m^2
\bigl(1-\beta^{-1}(np)^{-1/2}\E\wt\chi_{(n-k-rj+1)\beta}
\chi_{(p-k-rj+1)\beta
} \bigr)1_{k=l}.
\end{eqnarray*}
Using (\ref{chi}) one finds, for some constant $c$, that
\[
m^{-1}(rj+c)\le(\eta_{n,1;j}+\eta_{n,2;j})_{k,k}
\le m^{-1}(2rj+c)
\]
which yields (\ref{a21}) with $\ol\eta(x)=rx$. Separately, we have
the upper bound (\ref{a22}). The oscillation bound (\ref{a23}) may be
proved exactly as in the Gaussian case: we have once again that $\{
\sqrt{m_n}(\omega_{n,i;j})_{k,l}\}_{j\in\Z_+}$ are martingales with
independent increments whose fourth moments are uniformly bounded.
\end{pf*}

\section{Alternative characterizations of the laws}\label{s.alternative}

In this section, we derive the SDE and PDE characterizations, proving
Theorems \ref{t.SDE} and \ref{t.PDE}.

\subsection{First-order linear ODE}

For each noise path $B_x$, the eigenvalue equation $\Hc_{\beta,W} f =
\lambda f$ can be rewritten as a first-order linear ODE with continuous
coefficients. We begin with the formal second-order linear differential equation
%
%
\begin{equation}
\label{SL} f''(x) = \bigl(x-\lambda+{
\sqrt2}B'_x\bigr)f(x),
\end{equation}
where $f:\R_+\to\F^r$, with initial condition
%
%
\begin{equation}
\label{IC} f'(0) = W f(0).
\end{equation}
As usual, we allow $W\in M_r^*(\F)$ and interpret (\ref{IC}) via
(\ref{Wrig}). Rewrite (\ref{SL}) in the form
\[
\bigl(f' - {\sqrt2}Bf \bigr)' = (x-\lambda)f - {
\sqrt2}Bf'.
\]
Now let $g = f'-{\sqrt2}Bf$. The equation becomes
\begin{eqnarray*}
g' &=& (x-\lambda)f-{\sqrt2}Bf'
\\
&=& \bigl(x-\lambda-2B^2\bigr)f-{\sqrt2}Bg.
\end{eqnarray*}
In other words, the pair $ (f(x),g(x) )$ formally satisfies
the first-order linear system
%
%
\begin{equation}
\label{sys} \lleft[\matrix{ f'
\cr
g'} \rright] = \lleft[\matrix{ {\sqrt2}B&1
\cr
x-\lambda-2B^2&-{
\sqrt2}B} \rright] \lleft[\matrix{ f
\cr
g} \rright].
\end{equation}
Since $B_0=0$, $g$ simply replaces $f'$ in the initial condition (\ref
{IC}). If one prefers, this condition can be written in the standard form
%
%
\begin{equation}
\label{sysIC} -\tilde Wf(0) + \tilde Ig(0) = 0,
\end{equation}
where $\tilde W = \sum_{i\le r_0}w_iu_iu_i^\dag+\sum_{i>r_0} u_i
u_i^\dag$ and $\tilde I = \sum_{i\le r_0}u_i u_i^\dag$.

One could allow general measurable coefficients and define a solution
to be a pair of absolutely continuous functions $(f,g)$
satisfying~(\ref{sys}) Lebesgue a.e. This definition, equivalent to
writing (\ref{sys}) in an integrated form, is easily seen to coincide
with (\ref{integrated}). As in Remark \ref{r.ee}, however, we note
the coefficients are continuous; solutions may therefore be taken to
satisfy~(\ref{sys}) everywhere and are in fact continuously
differentiable. It is classical that the initial value problem has a
unique solution which exists for all $x\in\R_+$ (and further depends
continuously on the parameter $\lambda$ and the initial condition $W$).

\subsection{Matrix oscillation theory}

The matrix generalization of Sturm oscillation theory goes back to the
classic work of Morse \citet{Morse} [see also \citet{Morse2}].
Textbook treatments of self-adjoint differential systems include that
of \citet{Reid}. Our reference will be the paper of \citet
{BK}, which allows sufficiently general boundary conditions.

We first consider the eigenvalue problem on a finite interval $[0,L]$
with Dirichlet boundary condition $f(L)=0$ at the right endpoint. In
the scalar-valued setting, the number of eigenvalues below $\lambda$
is found to coincide with the number of zeros of $f$ (the solution of
the initial value problem) that lie in $(0,L)$. The correct
generalization to the matrix-valued setting involves tracking a matrix
whose columns form a basis of solutions, and counting the so-called
``focal points''.

We need a little terminology and a few facts from \citet{BK},
especially Definition 1 on page 338 there and the points that follow.
A \textit{matrix solution} of (\ref{sys}) is a pair $F,G:\R_+\to\F
^{r\times r}$ such that each column of $ \bigl[
{\fontsize{8.36pt}{9pt}\selectfont{\matrix{F\cr G}}}
\bigr]$ is a solution. A~\textit{conjoined basis} for (\ref{sys})
is a matrix solution $(F,G)$ with the additional properties that
$F^\dag
G=G^\dag F$ and $\operatorname{rank} \bigl[
{\fontsize{8.36pt}{9pt}\selectfont{\matrix{
F\cr G}}}
\bigr] = r$. The latter properties hold identically on $\R_+$ as
soon as they do at a single point; in particular, we may set $F(0)
=\tilde I$ and $G(0) = \tilde W$ to obtain a conjoined basis for the
initial condition (\ref{sysIC}). A point $x\in\R_+$ is called a
\textit{focal point} if $F(x)$ is singular, of \textit{multiplicity}
$\operatorname{nullity} F(x)$. The following proposition summarizes what
we need from the more general results of \citet{BK}.

%
\begin{proposition} Consider the differential system
\[
\lleft[\matrix{ f'
\cr
g'} \rright] =
\lleft[\matrix{ A&B
\cr
C-C_0\lambda&-A^\dag} \rright] \lleft[\matrix{ f
\cr
g } \rright]
\]
with real parameter $\lambda$, where $A(x),B(x),C(x),C_0(x)$ are
$n\times n$ matrices depending continuously on $x\in\R$ with
$B,C,C_0$ Hermitian and $B,C_0 > 0$. For each $\lambda\in\R$, let
$(F,G)$ be a conjoined basis with some fixed initial condition at 0.
Consider also the associated eigenvalue problem on $[0,L]$ with the
same boundary condition at 0 and Dirichlet condition $f=0$ at $L$.
Then, for all $\lambda\in\R$, the number of focal points of $(F,G)$
in $(0,L)$ equals the number of eigenvalues below $\lambda$.
Furthermore, the spectrum is purely discrete and bounded below with
eigenvalues tending to infinity.
\end{proposition}

\begin{pf} The idea is that focal points are isolated and move
continuously to the left as $\lambda$ increases. For sufficiently
negative $\lambda$, there are no focal points on $(0,L]$; each time
$\lambda$ passes an eigenvalue, a new focal point is introduced at $L$.

We indicate how the proposition follows from the results of \citet
{BK}. Note that Conditions (A1), (A2) on page~337 are satisfied by our
coefficients, and that (A3) on page~340 is satisfied by our boundary
conditions. Theorem~1 on page 345 thus applies. See (3.5) on page~341
for the definition of $\Lambda(\lambda)$; the Dirichlet condition at
$L$ gives the particularly simple result that the right-hand side of
(4.1) vanishes, so the quantity $n_2(\lambda)$ is constant. Theorem~2
applies as well, and we obtain $n_1(\lambda) -n_1 = n_3(\lambda)$.
Here, $n_1(\lambda)$ is the number of focal points in $[0,L)$, $n_1 =
\lim_{\lambda\to-\infty}n_1(\lambda)$ and $n_3(\lambda)$ is the
number of eigenvalues below $\lambda$. To finish, we consult Theorem~3
on page~353; noting that (A4$'$) is satisfied by Section~7.2, page~365,
to find that $n_1$ is simply the multiplicity of the focal point at~0.
The oscillation result follows. For the assertion about the spectrum,
we apply Theorem~4, noting that (A5), page~358 holds by (i) there, and
(A6), page~359 also holds.
\end{pf}

We conclude the following for our matrix system.

%
\begin{lemma}\label{l.osc} Consider the eigenvalue problem (\ref
{sys}) on $[0,L]$ with boundary conditions (\ref{sysIC}) and $f(L) =
0$. For each $\lambda\in\R$, let $(F,G)$ be the conjoined basis
initialized by $F(0) =\tilde I$ and $G(0) = \tilde W$; then the number
of focal points in the interval $(0,L)$, counting multiplicity, equals
the number of eigenvalues below $\lambda$. Furthermore, the spectrum
is purely discrete and bounded below with eigenvalues tending to infinity.
\end{lemma}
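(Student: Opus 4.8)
The plan is to obtain the lemma as a direct specialization of the preceding proposition, once the system \eqref{sys} has been matched to the form \eqref{BKsys} and the conjoined basis with $F(0)=\tilde I$, $G(0)=\tilde W$ is seen to encode the boundary condition \eqref{sysIC}.

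First I would read off from \eqref{sys} the coefficients of \eqref{BKsys}: $A(x)=\rt B_x$ (here $B_x$ is the matrix Brownian motion), the coefficient $B(x)$ of \eqref{BKsys} is $I_r$, and $C(x)=xI_r-2B_x^2$; the $(2,2)$ block $-\rt B_x$ equals $-A(x)\dg$ because $B_x=B_x\dg$. For a.e.\ noise path the hypotheses of the proposition then hold: $A,B,C$ are continuous in $x$ by continuity of Brownian paths; the coefficient $B(x)=I_r$ and $C(x)$ are Hermitian, again using $B_x=B_x\dg$; and $I_r$ is everywhere nonsingular, which yields the second conclusion — the spectrum is purely discrete, bounded below, with eigenvalues tending to $\infty$. (That matrix oscillation theory works over $\F=\R,\C,\HH$ as expected is consistent with the remarks opening Section~\ref{s.canonical}; alternatively one realifies.)

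Next I would check that $(F,G)$ initialized by $F(0)=\tilde I$, $G(0)=\tilde W$ is a conjoined basis cutting out exactly the subspace \eqref{sysIC}. Since $\tilde I=\sum_{i\le r_0}u_iu_i\dg$ and $\tilde W=\sum_{i\le r_0}w_iu_iu_i\dg+\sum_{i>r_0}u_iu_i\dg$ are simultaneously diagonal in $u_1,\dots,u_r$, they commute, so $F(0)\dg G(0)=\tilde I\tilde W=\tilde W\tilde I=G(0)\dg F(0)$; and the columns of $\bigl[\begin{smallmatrix}\tilde I\\\tilde W\end{smallmatrix}\bigr]$, namely $(u_i,w_iu_i)$ for $i\le r_0$ and $(0,u_i)$ for $i>r_0$, are linearly independent, so the rank is $r$. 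Thus $(F,G)$ is a conjoined basis and these identities persist on all of $\R_+$. Finally $-\tilde W\tilde I+\tilde I\tilde W=0$ together with $\mathrm{rank}\,[-\tilde W\ \ \tilde I]=r$ (its columns already span $\F^r$, via the $u_i$ for $i\le r_0$ and for $i>r_0$ respectively) forces the column space of $\bigl[\begin{smallmatrix}\tilde I\\\tilde W\end{smallmatrix}\bigr]$ at $0$ to coincide with $\{(f,g):-\tilde Wf+\tilde Ig=0\}$; this is both the admissibility of the boundary data in the sense of \citet{BK} and the match with \eqref{sysIC}. The proposition then gives that the number of focal points of $(F,G)$ in $(0,L)$, counting multiplicity, equals the number of eigenvalues below $\lambda$.

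I do not expect a genuine obstacle here; the work is the bookkeeping with the extended parameter set $M_r^*(\F)$. The one point to carry carefully is to run this verification uniformly across $r_0=0$ (full Dirichlet), $r_0=r$ (no Dirichlet), and the intermediate cases, and to note that $F(0)=\tilde I$ has nullity $r-r_0$ — i.e.\ a focal point of that multiplicity sits at the left endpoint, which is correctly excluded by working on the open interval $(0,L)$, consistently with the statement.
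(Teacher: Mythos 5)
Your proposal is correct and takes essentially the same route as the paper: there the lemma is obtained purely as a specialization of the preceding proposition (introduced only by ``We conclude the following for our matrix system''), with the conjoined basis initialized by $F(0)=\tilde I$, $G(0)=\tilde W$ encoding \eqref{sysIC} exactly as you verify. Your bookkeeping---identifying $A(x)=\rt B_x$, $B(x)=I_r$, $C(x)=xI_r-2B_x^2$, checking the conjoined-basis and boundary-subspace conditions, and noting that the multiplicity-$(r-r_0)$ focal point at the left endpoint is excluded by counting in the open interval $(0,L)$---is consistent with what the paper's proof of the proposition records (there, $n_1$ is precisely the multiplicity of the focal point at $0$).
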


A soft argument now recovers an oscillation theorem for the original
half-line problem.

%
\begin{theorem}\label{t.osc}Consider the eigenvalue problem (\ref
{sys}), (\ref{sysIC}) on $L^2(\R_+)$. For each $\lambda\in\R$, let
$(F,G)$ be the conjoined basis as above; then the number of focal
points in $(0,\infty)$ equals the number of eigenvalues strictly below
$\lambda$.
\end{theorem}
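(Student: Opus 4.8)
The plan is to deduce the half-line oscillation theorem from the finite-interval result of Lemma~\ref{l.osc} by a limiting argument in $L$. The key structural fact is that, as $L$ increases, the Dirichlet problem on $[0,L]$ exhausts the half-line problem: eigenvalues of the $[0,L]$ problem decrease monotonically in $L$ (the variational domain grows, after rescaling in the usual way, or more directly by a Dirichlet-bracketing/domain-monotonicity argument for the $L^2(\R_+)$ form and its truncations), and they converge to the eigenvalues of the half-line operator as $L\to\infty$. Simultaneously, the focal-point count on $(0,L)$ is nondecreasing in $L$: this is because the conjoined basis $(F,G)$ is fixed (it solves a single initial value problem on all of $\R_+$, independent of $L$), so enlarging $L$ can only add focal points, never remove them.

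First I would establish the two monotone-convergence statements. For the eigenvalues: fix $\lambda\in\R$ and let $N_\infty(\lambda)$ be the number of eigenvalues of the half-line problem strictly below $\lambda$ (finite and well-defined by Proposition~\ref{p.var}, since the half-line operator here is exactly $\Hc_{\beta,W}$ shifted). Let $N_L(\lambda)$ be the corresponding count for the $[0,L]$ Dirichlet problem. Domain monotonicity in $L$ gives $N_L(\lambda)\le N_{L'}(\lambda)$ for $L\le L'$, and a standard argument --- approximating a half-line eigenfunction below $\lambda$ by a cutoff supported in $[0,L]$, using the $L^*$-continuity of the form from Lemma~\ref{l.cbound} --- shows $N_L(\lambda)\uparrow N_\infty(\lambda)$; one must take $\lambda$ off the spectrum of the half-line problem to avoid boundary effects, which is harmless since the claimed identity is about eigenvalues \emph{strictly} below $\lambda$ and the set of eigenvalues is discrete. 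For the focal points: let $n_L$ denote the number of focal points of the fixed conjoined basis in $(0,L)$; this is nondecreasing in $L$, and we set $n_\infty = \lim_{L\to\infty} n_L\in\{0,1,2,\ldots,\infty\}$. By Lemma~\ref{l.osc}, $n_L = N_L(\lambda)$ for every $L$ (with $\lambda$ off the half-line spectrum, $L$ is not a focal point for all but countably many $L$, and in any case the identity in Lemma~\ref{l.osc} holds for the open interval). Passing to the limit, $n_\infty = N_\infty(\lambda) < \infty$, which is the assertion.

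The main obstacle I anticipate is the eigenvalue-convergence step $N_L(\lambda)\uparrow N_\infty(\lambda)$ done cleanly and uniformly enough: one needs both that each half-line eigenvalue below $\lambda$ is approached from above by a $[0,L]$ eigenvalue (upper bound via truncated test functions and the form bound \eqref{cbound}), and that no spurious low eigenvalues appear on $[0,L]$ beyond those of the half-line (lower bound, handled by the fact that $[0,L]$ with Dirichlet data at $L$ is, after extending by zero, a restriction of the half-line form to a subspace, so its eigenvalues dominate the half-line ones by min-max). Care is needed because the Dirichlet subspace of $W$ must be respected on both ends and the weighted norm $\norm{\cdot}_*$ must control the truncation errors; but all the necessary estimates are already in place in Lemma~\ref{l.cbound} and Fact~\ref{f.2}, so this is technical rather than deep. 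A secondary point is simply bookkeeping for $\lambda$ on the spectrum: since the theorem counts eigenvalues \emph{strictly} below $\lambda$ and focal points in the \emph{open} interval, and eigenvalues are isolated, one reduces to the generic case by a routine limiting argument in $\lambda$.

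\begin{proof}
Fix $\lambda\in\R$; it suffices to treat $\lambda$ that is not an eigenvalue of the half-line problem \eqref{sys},\eqref{sysIC} on $L^2(\R_+)$, the general case following since eigenvalues are isolated (Proposition~\ref{p.var}) and both quantities in the statement are right-continuous in $\lambda$ in the appropriate sense.

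Let $(F,G)$ be the conjoined basis with $F(0)=\tilde I$, $G(0)=\tilde W$; it solves a single initial value problem on all of $\R_+$, so for each $L>0$ Lemma~\ref{l.osc} gives that $n_L$, the number of focal points in $(0,L)$ counted with multiplicity, equals $N_L(\lambda)$, the number of eigenvalues below $\lambda$ of the problem \eqref{sys},\eqref{sysIC} on $[0,L]$ with Dirichlet condition $f(L)=0$. Since focal points are isolated, $n_L$ is a nondecreasing function of $L$; let $n_\infty=\lim_{L\to\infty}n_L\in\{0,1,2,\ldots,\infty\}$.

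The problem on $[0,L]$ corresponds, via the same integration by parts as in Remark~\ref{r.ee}, to the form $\Hc(\cdot,\cdot)$ of \eqref{Hc} restricted to those $f\in L^*$ supported in $[0,L]$; this is a closed subspace of $L^*$, so by min-max the $k$th eigenvalue of the $[0,L]$ problem is at least the $k$th eigenvalue $\Lambda_{k-1}$ of the half-line problem, giving $N_L(\lambda)\le N_\infty(\lambda)$ for all $L$. Moreover this subspace grows with $L$, so $N_L(\lambda)$ is nondecreasing in $L$. Conversely, let $f_0,\ldots,f_{N_\infty(\lambda)-1}$ be orthonormal half-line eigenfunctions with eigenvalues below $\lambda$; each lies in $L^*$, hence is uniformly H\"older$(1/2)$ and square-integrable with the weight $1+\ol\eta$ (Fact~\ref{f.1}). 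Choosing smooth cutoffs $\phi_L$ equal to $1$ on $[0,L-1]$ and vanishing on $[L,\infty)$, the functions $\phi_L f_j$ are supported in $[0,L]$, respect the Dirichlet components, and satisfy $\norm{\phi_L f_j - f_j}_*\to 0$ as $L\to\infty$ by dominated convergence (using $\norm{f_j}_*<\infty$). By the form bound \eqref{cbound} and $L^2$-normalization, the matrix of $\Hc(\cdot,\cdot)$ in the basis $\{\phi_L f_j\}$ converges to $\diag(\Lambda_0,\ldots,\Lambda_{N_\infty(\lambda)-1})$, all of whose entries are below $\lambda$; hence for $L$ large the $[0,L]$ problem has at least $N_\infty(\lambda)$ eigenvalues below $\lambda$, i.e.\ $N_L(\lambda)\ge N_\infty(\lambda)$. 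Therefore $N_L(\lambda)\uparrow N_\infty(\lambda)$.

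Combining, $n_\infty=\lim_{L\to\infty}n_L=\lim_{L\to\infty}N_L(\lambda)=N_\infty(\lambda)<\infty$. Since $\lambda$ is not a half-line eigenvalue, $N_\infty(\lambda)$ is precisely the number of eigenvalues strictly below $\lambda$, completing the proof.
\end{proof}
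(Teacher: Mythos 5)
Your proposal is correct in substance and follows the same overall route as the paper: exhaust the half-line by the Dirichlet problems on $[0,L]$, invoke Lemma~\ref{l.osc} for each $L$, identify the truncated variational problem with the restriction of $\Hc(\cdot,\cdot)$ to $L^*$ functions vanishing on $[L,\infty)$, and pass to the limit. The difference is in how the limit is controlled. The paper proves convergence of the individual eigenvalues, $\Lambda_{L,k}\to\Lambda_k$, by induction on $k$: the lower bound is min--max, and the upper bound uses a compactness/subsequence argument to get $L^2$-convergence of the truncated eigenfunctions, against which a cutoff of $f_k$ is projected. You instead prove convergence of the counting functions $N_L(\lambda)\uparrow N_\infty(\lambda)$ directly, by testing the truncated form on the span of the cutoffs $\phi_L f_0,\ldots,\phi_L f_{N_\infty(\lambda)-1}$ and using min--max in one shot; this avoids the induction and the eigenfunction-compactness step, at the small cost that you should say explicitly that the Gram matrix of the $\phi_L f_j$ tends to the identity (not merely ``$L^2$-normalization''), so that the relative eigenvalues of the form matrix, which tend to $\Lambda_0,\ldots,\Lambda_{N_\infty(\lambda)-1}<\lambda$, really bound the Rayleigh quotients on that test space. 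Since counting eigenvalues strictly below $\lambda$ is exactly what Theorem~\ref{t.osc} asks for, this is an efficient variant.

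One point to fix: your opening reduction to $\lambda$ off the half-line spectrum is justified by an appeal to ``right-continuity in $\lambda$'' of both counts, which is not correct as stated --- the number of eigenvalues strictly below $\lambda$ is left-continuous, jumping as $\lambda$ passes an eigenvalue, and the behaviour of the focal-point count at such $\lambda$ is precisely the delicate issue; the paper handles it by a separate argument (as $\lambda\searrow\Lambda_k$ the right-most focal point escapes to $+\infty$, by monotonicity and continuity of focal points in $\lambda$). Fortunately the reduction is unnecessary for you: your two inequalities, $N_L(\lambda)\le N_\infty(\lambda)$ from min--max and $N_L(\lambda)\ge N_\infty(\lambda)$ for large $L$ from the cutoff test space (which only uses $\Lambda_j<\lambda$ strictly), hold for every $\lambda\in\R$, whether or not $\lambda$ is an eigenvalue. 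So simply delete the genericity reduction and run the main argument for arbitrary $\lambda$; with the Gram-matrix remark added, the proof is complete.
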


\begin{pf}
Let $\Lambda_{L,k}, \Lambda_k$, $k=0,1,\ldots$ denote the lowest
eigenvalues of the truncated and half-line operators $\Hc_L, \Hc$,
respectively; it suffices to show that $\lim_{L\to\infty}\Lambda
_{L,k}=\Lambda_k$ for each $k$. Indeed, taking $L\to\infty$ in Lemma
\ref{l.osc} then yields the conclusion for each $\lambda\in\R
\setminus\{\Lambda_0,\Lambda_1,\ldots\}$. Letting $\lambda\searrow
\Lambda_k$, the right-most focal point must tend to $\infty$ by
monotonicity and continuity, so the claim actually holds for all
$\lambda\in\R$.

The variational problem for $\Hc_L$ simply minimizes over the subset
of $L^*$ functions that vanish on $[L,\infty)$; the Dirichlet
condition is important here. It follows immediately that $\Lambda
_{L,k}\ge\Lambda_k$, using the min--max formulation of the variational
characterization. Proceed by induction, assuming that $\Lambda
_{L,j}\to\Lambda_L$ for $j=0,\ldots, k-1$.

Let $f_{L,j}$ be orthonormal eigenvectors corresponding to $\Lambda
_{L,j}$. By the induction hypothesis, the variational characterization
for $\Hc$ and the finite-dimensionality of its eigenspaces, every
subsequence has a further subsequence such that $f_{L,j}\to_{L^2}
f_j$, eigenvectors corresponding to $\Lambda_j$. Let $f_k$ be an
orthogonal eigenvector corresponding to $\Lambda_k$ and take
$f_k^\varepsilon$ compactly supported with $\llVert
f_k^\varepsilon-f_k\rrVert_*<\varepsilon$. Let
\[
g_L = f_k^\varepsilon-\sum
_{j=0}^{k-1} \bigl\langle f_k^\varepsilon
,f_{L,j} \bigr\rangle f_{L,j}.
\]
For large $L$, the inner products are at most $2\varepsilon$, so
$\llVert g_L-f_k\rrVert_*\le c\varepsilon$. Noting that
$g_L$ is eventually supported on $[0,L]$, the variational
characterization gives
\[
\limsup_{L\to\infty} \Lambda_{L,k}\le\limsup
_{L\to\infty}\frac
{\Hc(g_L,g_L)}{ \langle g_L,g_L \rangle}
\]
and the right-hand side tends to ${\Hc(f_k,f_k)}/{ \langle
f_k,f_k \rangle} = \Lambda_k$ as $\varepsilon\to0$.
\end{pf}

\subsection{Riccati SDE: Stochastic airy meets dyson}

Let $(F,G)$ be a conjoined basis for (\ref{sys}) as defined in the
previous subsection. Then, on any interval with no focal points, the
matrix $Q = GF^{-1}$ is self-adjoint and satisfies the \textit{matrix
Riccati equation}
%
%
\begin{equation}
\label{Q} Q' = rx-\lambda-(Q+{\sqrt2}B)^2
\end{equation}
[see page~338 of \citet{BK}].

As $x$ passes through a focal point $x_0$, an eigenvalue $q$ of $Q$
``explodes to $-\infty$ and restarts at $+\infty$''. The precise
evolution of $Q$ near $x_0$ can be seen by choosing $a\in\R$ so that
$\tilde Q = (Q-a)^{-1}= F(G-aF)^{-1}$ is defined; then $\tilde Q$ satisfies
%
%
\begin{equation}
\label{Qt} \tilde Q' = \bigl(1+\tilde Q({\sqrt2}B+a) \bigr)
\bigl(1+({\sqrt2}B+a)\tilde Q \bigr) - (x-\lambda)\tilde Q^2.
\end{equation}
Writing $\tilde q =1/(q-a)$ and $v$ for the corresponding eigenvector,
notice how
\[
\tilde q'(x_0) = v(x_0)^\dag
\tilde Q'(x_0) v(x_0) = 1.
\]
Thus, $\tilde q$ is ``pushed up through zero'', corresponding to the
explosion/restart in $q = 1/\tilde q + a$. In this way, we may consider
$Q(x)\in M_r^*(\F)$ to be defined for all $x$. The initial condition
is then simply $Q(0) = W$.

Now let $P = F'F^{-1}$. While $P = Q + {\sqrt2}B$ is not
differentiable, by
(\ref{Q}) it certainly satisfies the integral equation
\[
P_{x_2} - P_{x_1} = {\sqrt2}(B_{x_2}-B_{x_1})+
\int_{x_1}^{x_2} \bigl(ry - \lambda-
P_y^2\bigr) \,dy
\]
if $[x_1,x_2]$ is free of focal points. In other words, $P$ \emph{is a
strong solution of the It\^o equation}
%
%
\begin{equation}
\label{matSDE} dP_x ={\sqrt2} dB_x+ \bigl(rx-
\lambda-P_x^2\bigr)\,dx
\end{equation}
off the focal points. The evolution of $P$ through a focal point can be
described in the coordinate $\tilde P = (P-a)^{-1} = F(F'-aF)^{-1}$.
Using (\ref{Qt}) and It\^o's lemma, one could write down an SDE for
$\tilde P = \tilde Q(1+{\sqrt2}B\tilde Q)^{-1}$. The initial condition here
is also $P(0)=W$.

Consider the eigenvalues $p_1, \ldots, p_r$ of $P$. The main point is
that the drift term in (\ref{matSDE}) is unitarily equivariant and
passes through the usual derivation of Dyson's Brownian motion
[\citet{Dyson}]. The eigenvalues therefore evolve as an
autonomous Markov process.

To describe the law on paths we need a space, and there are two issues:
it will be necessary to keep the eigenvalues ordered but also allow for
explosions/restarts. We therefore define a sequence of \textit{Weyl
chambers} $C_k\subset(-\infty,\infty]^r$ by
\begin{eqnarray*}
C_0 &=& \{p_1 < \cdots< p_r\},
\\
C_1 &=& \{p_2 <\cdots< p_r< p_1\},
\\
C_2 &=& \{p_3< \cdots< p_r< p_1<p_2\}
\end{eqnarray*}
and so on, permuting cyclically. We glue successive adjacent chambers
together at infinity in the natural way to make the disjoint union $\Cc
= C_0\cup C_1\cup\ldots$ into a connected smooth manifold. That is,
taking $p_1\to-\infty$ in $C_0$ puts you at $p_1=+\infty$ in $C_1$;
the smooth structure is defined by the coordinate $\tilde p_1=1/p_1$,
which vanishes along the seam. Glue $C_{k-1}$ to $C_{k}$ similarly
along $\{p_{ k~\mathrm{mod}~r}=\infty\}$. We also define $\ol C_k$,
$\ol\Cc$ in which some coordinates may be equal, and $\del C_k = \ol
C_k\setminus C_k$, $\del\Cc=\ol\Cc\setminus\Cc$ in which some
coordinates are equal.

%
\begin{theorem}\label{t.Dyson} Represent the eigenvalues of\/ $W\in
M_r^*(\F)$ as\/ $\ww=\break (w_1\ldots, w_r)\in\ol C_0$. The eigenvalues
$\pp=(p_1,\ldots,p_r)$ of $P$ evolve as an autonomous Markov process
whose law on paths $\R_+\to\ol\Cc$ is the unique weak solution of
the SDE system
%
%
\begin{equation}
\label{evSDE} dp_i = \frac{2}{\sqrt{\beta}}\,db_i +
\biggl(rx-\lambda-p_i^2+\sum
_{j\neq i}\frac{2}{p_i-p_j} \biggr)\,dx
\end{equation}
with initial condition $\pp(0) = \ww$, where $b_1,\ldots, b_r$ are
independent standard real Brownian motions. An eigenvalue $p_i$ can
explode to $-\infty$ and restart at $+\infty$, meaning $\pp$ crosses
from $C_{k}$ to $C_{k+1}$; the evolution through an explosion is
described in the coordinate $\tilde p_i = 1/p_i$, which satisfies
%
%
\begin{equation}
\label{evSDE2} \qquad d\tilde p_i = -\frac{2}{\sqrt{\beta}}\tilde
p_i^2\,db_i + \biggl(1+ \biggl(\lambda-rx+
\sum_{j\neq i}\frac{2\tilde p_i\tilde
p_j}{\tilde p_i-\tilde p_j} \biggr)\tilde
p_i^2+\frac{4}{\beta}\tilde p_i^3
\biggr)\,dx.
\end{equation}
\end{theorem}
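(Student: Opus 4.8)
The plan is to read the eigenvalue dynamics off the matrix It\^o equation for $P$ by the argument of \citet{Dyson}, keeping careful track of how the scalar $\rt$ and the classical value of $\beta$ distribute among the coefficients, and then to invoke standard well-posedness for Dyson-type systems to identify the law on path space. Recall that $P=F'F^{-1}=Q+\rt B$ is self-adjoint and, off focal points, is a strong solution of \eqref{matSDE}, where $B$ is a standard $M_r(\F)$ Brownian motion; thus the diffusion part is invariant and the drift $rx-\lambda-P^2$ equivariant under conjugation of $P$ by a fixed unitary. On the a.s.\ full-measure set of times at which $P$ has no focal point and $r$ distinct eigenvalues, the ordered eigenvalues $p_1<\dots<p_r$ are smooth functions of $P$, and It\^o's formula together with first- and second-order perturbation theory gives, in the instantaneous orthonormal eigenbasis $v_1,\dots,v_r$,
\[
dp_i \,=\, v_i\dg(dP)v_i \;+\; \sum_{j\neq i}\frac{d\langle v_i\dg P v_j,\, v_j\dg P v_i\rangle}{p_i-p_j}.
\]
Here $v_i\dg(dP)v_i=\rt\,v_i\dg(dB)v_i+(rx-\lambda-p_i^2)\,dx$, and its martingale part has quadratic variation $\rt^2\cdot\tfrac2\beta\,dx=\tfrac4\beta\,dx$ because the diagonal of a standard matrix Brownian motion carries variance $2/\beta$ per unit time, so it equals $\trb\,db_i$ for a standard real Brownian motion $b_i$; the $b_i$ have vanishing mutual cross-variation and are seen to be independent by L\'evy's characterization, using the independence of the algebraically independent entries of $B$. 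The off-diagonal bracket is $d\langle v_i\dg P v_j,\, v_j\dg P v_i\rangle=\rt^2\,dx=2\,dx$, since the off-diagonal entries of a standard matrix Brownian motion are $\F N(0,1)$ and so carry unit second moment per unit time \emph{irrespective of $\beta$}, and $v_i\dg dB v_i$ has vanishing cross-variation with $v_i\dg dB v_j$. As $(P^2)_{ii}=p_i^2$ in the eigenbasis, this is exactly \eqref{evSDE}; that the eigenvalues form a continuous semimartingale solving it for all $x$ follows by continuity once one knows (below) that \eqref{evSDE} is non-colliding.

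Near a focal point $x_0$ the eigenvalues of $P$ diverge, so $0\notin\Spec P$ there and $\tilde P=P^{-1}=F(F')^{-1}$ is a bounded smooth matrix process whose eigenvalues $\tilde p_i=1/p_i$ pass through $0$; as already noted, $\tilde p_i$ crosses $0$ with unit speed, so $p_i$ explodes to $-\infty$ and reappears at $+\infty$ — the passage from $C_k$ to $C_{k+1}$ — which is an ordinary, nondegenerate diffusion step in the coordinate $\tilde p_i$ on the glued manifold $\ol\Cc$. To obtain \eqref{evSDE2} one applies It\^o's formula to the matrix-inverse map, $d\tilde P=-\tilde P(dP)\tilde P+\tilde P(dP)\tilde P(dP)\tilde P$, substitutes \eqref{matSDE}, and uses $\tilde P P^2\tilde P=I$: the noise becomes multiplicative, $-\rt\,\tilde P(dB)\tilde P$, with diagonal projection $-\rt\tilde p_i^2\,v_i\dg(dB)v_i=-\trb\tilde p_i^2\,db_i$, and the drift $-\tilde p_i^2(rx-\lambda-p_i^2)=1+(\lambda-rx)\tilde p_i^2$ supplies the leading terms, while the It\^o corrections from the second-order term of the inverse map and from the eigenvalue Hessian (the repulsion) combine, via the identity $\tfrac{2\tilde p_i^2\tilde p_j^2}{\tilde p_i-\tilde p_j}+2\tilde p_i^2\tilde p_j=\tfrac{2\tilde p_i^3\tilde p_j}{\tilde p_i-\tilde p_j}$, to $\tfrac4\beta\tilde p_i^3+\sum_{j\neq i}\tfrac{2\tilde p_i^3\tilde p_j}{\tilde p_i-\tilde p_j}$, the $\beta$-dependence of the cubic term entering through the covariance structure $\E[\,dB\,M\,dB\,]$ of the matrix Brownian motion. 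This is \eqref{evSDE2} (the chart $(Q-a)^{-1}$ likewise reproduces \eqref{Qt}). Since focal points are isolated, only finitely many occur on any bounded interval and these charts cover $\R_+$, so \eqref{evSDE}--\eqref{evSDE2} describe the full trajectory of $\pp$, with $\pp(0)=\ww$ because $B_0=0$ forces $P(0)=Q(0)=W$.

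It remains to see that $\pp$ is an autonomous Markov process and that its law is the unique weak solution of the system. Markovianity and autonomy follow from the equivariance above: for a fixed unitary $U$, the process $UPU\dg$ run from $UP(0)U\dg$ has the same law as $P$, so the conditional law of any conjugation-invariant functional of the future path — in particular of $(\pp_y)_{y\ge x}$ — given the present depends on $P$ only through $\Spec P$, i.e.\ through $\pp$; equivalently, the (time-dependent) generator of $P$ preserves conjugation-invariant functions and hence descends to a generator in the variables $p_1,\dots,p_r$. For uniqueness in law it suffices to establish pathwise uniqueness for the system on $\ol\Cc$, existence being furnished by $\pp$ itself. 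On the interior of each Weyl chamber the coefficients of \eqref{evSDE} are locally Lipschitz, giving pathwise uniqueness up to the first collision $p_i=p_j$ or explosion; collisions are a.s.\ never reached because $\beta\ge1$, by the standard non-collision argument for Dyson-type SDEs (a Lyapunov estimate on $-\log\prod_{i<j}(p_i-p_j)^2$); each explosion is traversed, uniquely, through \eqref{evSDE2}, whose coefficients are locally Lipschitz near $\tilde p_i=0$; and patching the charts across the countable, locally finite set of explosion times yields a pathwise unique global solution, hence uniqueness in law.

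The main obstacle is this last global step: combining the non-collision property — the borderline case for $\beta=1$ — with the chart transitions so as to obtain a single well-posed diffusion on $\ol\Cc$ whose law is genuinely pinned down by \eqref{evSDE}--\eqref{evSDE2}. The It\^o and perturbation computations producing the coefficients are routine; their only delicate point is the bookkeeping of how $\rt$ and the parameter $\beta$ split between the diagonal and off-diagonal variances of the matrix Brownian motion.
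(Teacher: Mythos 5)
Your derivation of \eqref{evSDE} is essentially the paper's: It\^o's formula plus first- and second-order perturbation theory (Hadamard's variation formulas) in the instantaneous eigenbasis, exploiting invariance of the noise and equivariance of the drift in \eqref{matSDE}, with the same variance bookkeeping ($\rt^2\cdot\tfrac2\beta=\tfrac4\beta$ on the diagonal, $\rt^2\cdot 1=2$ off the diagonal). For \eqref{evSDE2} the paper simply applies scalar It\^o to $\tilde p_i=1/p_i$ using \eqref{evSDE}; your matrix-level route via $\tilde P=P^{-1}$ reaches the same coefficients, but note it needs $0\notin\Spec P$ on the whole chart, which is not guaranteed near a focal point (only the exploding eigenvalue is known to be large) --- this is exactly why the paper works with the shifted inverse $(P-a)^{-1}$ for a suitable $a$, or one can avoid the issue entirely by the scalar computation. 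These are minor.

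The genuine gap is in the uniqueness step. The theorem allows $\ww\in\ol C_0$, i.e.\ repeated coordinates (and repeated values $+\infty$): the pure Dirichlet case $w_1=\dots=w_r=+\infty$, which is the basic Tracy--Widom situation, starts \emph{on} $\del C_0$. Your argument --- locally Lipschitz coefficients inside each chamber give pathwise uniqueness up to collision, collisions are never reached for $\beta\ge1$, patch across explosions via the $\tilde p_i$ chart --- only controls solutions that are already in the open chamber; started from a boundary point there is no initial interior stretch to which the Lipschitz/non-collision argument applies, and the delicate issue is precisely uniqueness of how the process leaves $\del C_0$ (the same issue as Dyson's Brownian motion started from coinciding particles). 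The paper addresses this by asserting the no-collision property for \emph{any} solution even with $\pp(0)\in\del C_0$, constructing an entrance law from $\del\Cc$ by a limiting procedure and then using regularity of the coefficients inside $\Cc$, citing \citet{AGZ}, Section 4.3.1 for the driftless model argument. To complete your proof you would need to supply this boundary-start analysis (or an equivalent, e.g.\ a monotone coupling in the initial data together with continuity of the law), not merely the interior non-collision Lyapunov estimate.
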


\begin{pf}
Deriving (\ref{evSDE}) from (\ref{matSDE}) is simply a matter of
applying It\^o's lemma, at least in $\Cc$ where the eigenvalues are
distinct. One needs to differentiate an eigenvalue with respect to a
matrix, and this information is given by Hadamard's variation formulas.
In detail, let $A\in M_r(\F)$ vary smoothly in time and suppose $A(0)$
has distinct spectrum. Then eigenvalues $\lambda_1,\ldots,\lambda_r$
of $A$ and corresponding eigenvectors $v_1,\ldots,v_r$ vary smoothly
near 0 by the implicit function theorem. Differentiating $Av_i =
\lambda_i v_i$ and $v_i^\dag v_i=1$ lead to the formulas
\[
\dot\lambda_i = v_i^\dag\dot
Av_i,\qquad\qquad\ddot\lambda_i = v_i^\dag
\ddot Av_i+2\sum_{j\neq i}
\frac{\llvert v_i^\dag\dot
Av_j\rrvert^2}{\lambda_i-\lambda_j}.
\]
Writing $X = \dot A(0)$ and $\nabla_X$ for the directional derivative,
and taking\break $v_1(0),\ldots, v_r(0)$ to be the standard basis, we find
\[
\nabla_X\lambda_i = X_{ii},\qquad\qquad
\nabla_X^2\lambda_i = 2\sum
_{j\neq i}\frac{\llvert X_{ij}\rrvert^2}{\lambda
_i-\lambda_j}.
\]
Returning to (\ref{matSDE}), at each fixed time $x$ we can change to
the diagonal basis for $P_x$ because the noise term is invariant in
distribution and the drift term is equivariant. It\^o's lemma amounts
to formally writing $dp_i = \nabla_{dP} p_i +\frac{1}{2}\nabla
_{dP}^2
p_i$ and using that $dB_{ii}$ are jointly distributed as $\sqrt
{2/\beta} \,db_i$ for $i=1,\ldots,r$ while $\llvert dB_{ij}\rrvert^2 =
dt$ for $j\neq i$. We thus arrive at (\ref{evSDE}).

Recall that the evolution of $P$ through a focal point is still
described by an SDE, after changing coordinates. The same is therefore
true of $\pp$ through an explosion; the form (\ref{evSDE2}) is
obtained from (\ref{evSDE}) by an application of It\^o's lemma.

Just as with the usual Dyson's Brownian motion, the $p_i$ are almost
surely distinct at all positive times: $\pp(x)\in\Cc$ for all $x>0$.
One can show this ``no collision property'' holds for any solution of
(\ref{evSDE}), (\ref{evSDE2}), even with an initial condition $\pp
(0)\in\del C_0$. (Technically, one defines an entrance law from $\del
\Cc$ by a limiting procedure.) Since the coefficients are regular
inside $\Cc$, this suffices to prove uniqueness of the law. See
\citet{AGZ}, Section~4.3.1 for a detailed proof in the driftless
case.
\end{pf}


\begin{pf*}{Proof of Theorem~\ref{t.SDE}}
Explosions of $\pp$ as in
Theorem~\ref{t.Dyson} correspond to focal points of $F$ for each
$\lambda$. By Theorem \ref{t.osc}, the total number of explosions $K$
is equal to the number of eigenvalues strictly below $\lambda$.
(Notice that $\pp$ ends up in $C_K$.) For a \emph{fixed} $\lambda$,
translation invariance of the driving Brownian motions $b_i$ allows one
to shift time $x\mapsto x-\lambda/r$ and use (\ref{SDE}) started at
$x_0 = -\lambda/r$. Putting $a=-\lambda$ we have $\P(-\Lambda_k\le
a) = \P(\Lambda_k\ge\lambda) = \P_{a/r,\ww}(K\le k)$ as required.
\end{pf*}

\subsection{PDE and boundary value problem}

We now prove the PDE characterization, Theorem \ref{t.PDE}. We will
need two properties of the eigenvalue diffusion.

%
\begin{lemma}\label{l.dprops} Let $\pp:[x_0,\infty)\to\ol\Cc$
have law $\P_{x_0,\ww}$ as in (\ref{SDE}) and let $K$ be the number
of explosions. Then the following hold:
\begin{enumerate}[(ii)]
\item[(i)] Given $x_0,k$, $\P_{x_0,\ww}(K\le k)$ is increasing in $\ww$
with respect to the partial order $\ww\le\ww'$ given by $w_i\le
w_i'$, $i=1,\ldots,r$.

\item[(ii)]$\P_{x_0,\ww}$-almost surely, $p_1,\ldots,p_r$ remain bounded
below in $C_K$ (after the last explosion), or equivalently in $C_0$ on
the event $\{K=0\}$.
\end{enumerate}
\end{lemma}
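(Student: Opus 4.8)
The two assertions are of quite different character, so I would handle them separately. For part (i), the monotonicity in $\ww$, the cleanest route is to couple two copies of the diffusion $\pp$ and $\pp'$ driven by the \emph{same} Brownian motions $b_1,\ldots,b_r$, started from $\ww\le\ww'$, and show the ordering $\pp(x)\le\pp'(x)$ is preserved componentwise for all $x\ge x_0$ — where one must be careful about what ``$\le$'' means across explosions, since the natural statement is really about the lifted coordinate on $\ol\Cc$ (a particle that has exploded $j$ times sits ``below'' one that has exploded $j-1$ times). Concretely, I would argue that on any interval where neither copy explodes, the drift vector field of \eqref{SDE}, namely $b_i(\pp) = rx - \lambda - p_i^2 + \sum_{j\ne i}2/(p_i-p_j)$, is \emph{quasimonotone} (off-diagonally monotone) in the relevant sense: $\partial b_i/\partial p_j = 2/(p_i-p_j)^2 \ge 0$ for $j\ne i$, which is exactly the structure that makes comparison work for the coupled SDEs with identical noise. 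The term $-p_i^2$ is not monotone in $p_i$, but that is irrelevant — the standard comparison argument (e.g.\ Ikeda--Watanabe, or the SDE comparison used for Dyson's Brownian motion in \citet{AGZ}, Section 4.3) only needs quasimonotonicity in the off-diagonal variables plus Lipschitz/regularity inside $\Cc$, which holds locally. When $\pp$ hits an explosion (crosses a seam $p_i\to-\infty$, $p_i'$ still finite), the ordering is maintained by the gluing convention; when $\pp'$ explodes one must check $\pp$ has already exploded in that coordinate, which again follows from having maintained the order up to that time. Having propagated $\pp(x)\le\pp'(x)$ for all $x$ in the lifted sense, the number of explosions satisfies $K\le K'$ pathwise, hence $\P_{x_0,\ww}(K\le k)\ge\P_{x_0,\ww'}(K\le k)$ would be \emph{backwards} — so I should restate: the event $\{K\le k\}$ is \emph{increasing} in $\ww$ because larger $\ww$ means fewer explosions, giving exactly the claimed monotonicity. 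The main technical obstacle here is making the comparison rigorous \emph{through} the explosion seams and for initial data on $\del C_0$; I would reduce to strictly-ordered interior initial data by the entrance-law/limiting procedure already invoked in the proof of Theorem~\ref{t.Dyson}, then pass to the limit using continuity of the relevant probabilities.

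For part (ii), that after the last explosion the particles stay bounded below, the natural approach is via the operator side rather than a direct SDE estimate. By Theorem~\ref{t.osc} and the discussion preceding it, the path $\pp$ for parameter $\lambda$ is $P_x = F'F^{-1}$ where $(F,G)$ is the conjoined basis, and on the event $\{K = k\}$ the number of eigenvalues of $\Hc_{\beta,W}$ strictly below $\lambda$ is exactly $k$; equivalently $\lambda < \Lambda_k$ (with the convention $\Lambda_k$ the $(k+1)$st eigenvalue), i.e.\ $\lambda$ lies below the top of the relevant spectral gap. On that event, after the last focal point the matrix $F$ is nonsingular on $[x_K,\infty)$ and $Q = GF^{-1}$ solves the Riccati equation \eqref{Q} with no further blow-up; I want to show $\liminf_{x\to\infty}$ of the smallest eigenvalue of $P_x = Q + \rt B_x$ is finite, in fact that $P_x$ stays bounded below. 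The key input is the behavior of the genuine ground state (or $k$th eigenfunction) of $\Hc_{\beta,W}$: since $\lambda<\Lambda_k$, a Sturm-type argument says the solution $F$ of the initial value problem, after its last focal point, is (up to right-multiplication) asymptotically proportional to a decaying solution — morally the bottom of the spectrum behaves like the ground state of $-d^2/dx^2 + rx$, whose logarithmic derivative $f'/f$ tends to $-\infty$ like $-\sqrt{rx}$ only in the \emph{critical} direction and is otherwise controlled. I would make this precise by comparison: the scalar quantity $\mu(x) = \lambda_{\min}(P_x)$ satisfies, in the viscosity/Dini sense, $\mu' \ge rx - \lambda - \mu^2 - (\text{bounded noise terms})$ once we are past all explosions — because the off-diagonal Dyson terms $\sum_{j\ne i}2/(p_i-p_j)$ acting on the \emph{smallest} eigenvalue $p_1$ (say, in $C_K$) are $\ge 0$, they push $p_1$ \emph{up}. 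So $p_1$ (the smallest particle) obeys $dp_1 \ge \rt\,db_1 + (rx-\lambda-p_1^2)\,dx$ after the last explosion, and a standard comparison with the scalar SDE shows that if $p_1$ were to run to $-\infty$ it would do so in finite time, contradicting ``no more explosions''; hence it stays bounded below on that event. This comparison-from-below argument, together with the fact that the scalar analogue is exactly the $r=1$ case already understood in \citetalias{BV1}, is the substance. The main obstacle is justifying the one-sided comparison for $\lambda_{\min}(P_x)$ at the (measure-zero but delicate) times where eigenvalue crossings of $P$ occur; this is handled by the usual device of noting $\lambda_{\min}$ is a concave (hence locally Lipschitz, and semimartingale) function of the matrix and applying the Meyer--Itô / Tanaka formula, with the local-time contributions having a favorable sign, or alternatively by working with a smoothed symmetric function of the eigenvalues and taking limits.

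Throughout, I would lean on results already in place: the oscillation theorem (Theorem~\ref{t.osc}) to translate ``number of explosions $\le k$'' into a spectral inequality $\lambda < \Lambda_k$, the Riccati correspondence \eqref{Q}--\eqref{matSDE} to move between $P$ and the operator, the SDE well-posedness and no-collision facts from Theorem~\ref{t.Dyson}, and the scalar ($r=1$) results of \citetalias{BV1} as the base case for the comparison arguments. The bulk of the new work is the two coupling/comparison arguments — quasimonotonicity of the drift for (i), and the favorable sign of the repulsion term acting on the extreme eigenvalue for (ii) — plus the bookkeeping needed to carry both arguments across the explosion seams of $\ol\Cc$.
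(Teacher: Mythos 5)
Your treatment of part (i) is sound in outline and is genuinely close to the paper's \emph{second} suggested route: the paper's primary argument simply invokes Theorem~\ref{t.SDE} together with Remark~\ref{r.mon} (pathwise monotonicity of the operator eigenvalues $\Lambda_k$ in $W$), and as an alternative it cites the fact that the matrix Riccati flow \eqref{matSDE} preserves the matrix partial order (Reid), so that a solution started from $W$ explodes no later than one started from $W'\ge W$. Your coupling at the level of the eigenvalue SDE with identical driving noise, using quasimonotonicity $\del b_i/\del p_j=2/(p_i-p_j)^2\ge 0$, is a legitimate variant of that comparison idea; the price is exactly the bookkeeping you identify (carrying the order across seams, entrance from $\del C_0$, and the pathwise-uniqueness input needed to know the coupled copies have the right marginals), whereas the paper's variational argument gets the monotonicity for free. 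Note also the direction: under the coupling, the copy started from the \emph{smaller} $\ww$ explodes at least as often, i.e.\ $K\ge K'$, which is what yields $\P_{x_0,\ww}(K\le k)\le\P_{x_0,\ww'}(K\le k)$; your intermediate statement had this reversed, though your final conclusion is the right one.

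Part (ii) contains a genuine gap at its key step. You claim that the Dyson terms acting on the \emph{smallest} particle are $\ge 0$ and push it up, giving $dp_1\ge\tfrac{2}{\sqrt\beta}db_1+(rx-\lambda-p_1^2)\,dx$. The sign is wrong: for the lowest particle one has $p_1-p_j<0$ for all $j\ne 1$, so every term $2/(p_1-p_j)$ is \emph{negative} --- the repulsion pushes the bottom particle \emph{down}, and the one-sided comparison with the scalar SDE fails in the direction you need. (The favorable sign you describe is available for the \emph{largest} particle, not the smallest.) A repair is not immediate: the interaction is only negligible when $p_1$ is far below the other particles, and it is large and negative precisely when $p_1$ sits just below $p_2$, so one must either prove the stronger asymptotics $p_i\sim\sqrt{rx}$ for \emph{all} particles or run a more careful two-regime argument showing that if $p_1$ ever falls well below $-\sqrt{rx}$ (where the interaction is small) it must explode in finite time, contradicting $\{K=k\}$. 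The paper itself does not supply these details: it reduces (ii) to the assertion $p_i\sim\sqrt{rx}$, cites Proposition~3.7 of \citetalias{RRV} for the $r=1$ case, gives a linearization heuristic for the repulsion terms, and explicitly omits the proof. So your spectral-theoretic framing via Theorem~\ref{t.osc} is fine but not the issue; the substance of (ii) is exactly the estimate whose sign your sketch gets backwards.
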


\begin{pf} Part (i) is a consequence Theorem~\ref{t.SDE} and
Remark~\ref{r.mon}, the pathwise monotonicity of the eigenvalues
$\Lambda_k$ as a function of the boundary parameter $W$ with respect
to the usual matrix partial order. It can also be seen from the related
fact that the matrix partial order is preserved pathwise by the matrix
Riccati equation (\ref{matSDE}), which implies that a solution started
from $W$ explodes no later than one started from $W'\ge W$. This fact
holds for the $P$ evolution if it holds for the $Q$ evolution (\ref
{Q}), and for the latter it is Theorem IV.4.1 of \citet{Reid2}.

Part (ii) follows from the stronger assertion that $p_i\sim\sqrt{rx}$
as $x\to\infty$. In the $r=1$ case, this is Proposition 3.7 of
{RRV}. Heuristically, the single particle drift linearizes
at the stable equilibrium $\sqrt{rx}$ to $2\sqrt{rx}(\sqrt
{rx}-p_i)$; even with the repulsion terms one expects fluctuations of
variance only $C/\sqrt{x}$. We omit the proof.
\end{pf}

\begin{pf*}{Proof of Theorem \ref{t.PDE}}
Assume the diffusion
representation of Theorem~\ref{t.SDE} for $F_\beta(x;\ww)=\P
(-\Lambda_0\le x)$ on $\R\times\ol C_0$. We first show $F = F_\beta
$ has the asserted properties and afterward argue uniqueness. Writing
$L$ for the space-time generator of (\ref{SDE}), the PDE (\ref{PDE})
is simply the equation $LF = 0$ after replacing $x$ with $x/r$. In
other words, it is the Kolmogorov backward equation for the hitting
probability (\ref{explosions}) (more precisely, the probability of
never hitting $\{w_1=-\infty\}$), which is $L$-harmonic. This extends
to $w_r=+\infty$ by using the local coordinate there; from (\ref
{evSDE2}) one sees that the coefficients remain regular. Although the
diffusivity vanishes at $w_r=+\infty$, the drift does not, and it
follows that $F$ is continuous up to $w_r=+\infty$. The PDE holds even
at points $\ww\in\del C_0$ with appropriate one-sided derivatives;
notice that the apparent singularity in the ``Dyson term'' of the PDE
is in fact removable for $F$ regular and symmetric in the $w_i$. [For a
toy version, consider a function $f:\R\to\R$ that is twice
differentiable and even; then $f'$ is odd and $f'(w)/w$ is continuous
with value $f''(0)$ at $w=0$. These functions form the domain of the
generator of the Bessel process on the half-line $\{w\ge0\}$ in the
same way that symmetric functions form the domain of the generator of
Dyson's Brownian motion on a Weyl chamber.] Finally, the picture can be
copied to $\ww\in(-\infty,\infty]^r$ by symmetry, permuting the $w_i$.

The boundary condition (\ref{PDEBC1}) follows from the monotonicity
property of Lemma~\ref{l.dprops}(i). For fixed $\ww$, $F(x;\ww)\to
1$ as $x\to\infty$ because it is a distribution function in $x$; by
monotonicity in $\ww$, the convergence is uniform over a set of $\ww$
bounded below. To understand the boundary condition (\ref{PDEBC2})
(using $w_1$ in~$\ol C_0$), change to the coordinate $\tilde w_1=1/w_1$
and close the domain to include the ``bottom boundary'' $\{w_1 =
-\infty\}$. Then (\ref{PDEBC2}) becomes an ordinary Dirichlet
condition. While the diffusivity vanishes on this boundary, the drift
is nonzero into the boundary. The hitting probability is therefore
continuous up to the boundary.

For $F^k$, there is the following more general picture. Consider the
PDE in $\ol C_0\cup\cdots\cup\ol C_k$, defined across the seams by
changing coordinates as in (\ref{evSDE2}). Put the boundary condition
(\ref{PDEBC1}) on all the chambers and (\ref{PDEBC2}) on the bottom
of $\ol C_k$. Then the solution is $F^k$ in $\ol C_0$; the reason is
the same as for $F=F^0$, but now using (\ref{explosionsh}) and the
hitting event ``at most $k$ explosions''. Similarly, the solution is
$F^{k-1}$ in $\ol C_1$ and so on down to $F^0$ in $\ol C_k$. Continuity
holds across the seams and (\ref{PDEBCm}) follows after permuting coordinates.

Toward uniqueness, suppose $\tilde F$ is another bounded solution of
the boundary value problem (\ref{PDE})--(\ref{PDEBC2}) on $\R\times
\ol C_0$. With the notation of Theorem~\ref{t.SDE}, $\tilde F(rx; \pp
_x)$ is a local martingale under $\P_{x_0,\ww}$ by the PDE (\ref
{PDE}). It is therefore a bounded martingale. Let $\zeta\in
(x_0,\infty
]$ be the time of the first explosion; optional stopping gives $\tilde
F(rx_0;\ww) = \E_{x_0,\ww} \tilde F (r(\zeta\wedge x);\pp
_{\zeta\wedge x} )$ for all $x\ge x_0$. Taking $x\to\infty$, we
conclude by bounded convergence, the boundary behaviour (\ref
{PDEBC1}), (\ref{PDEBC2}) of $\tilde F$ and Lemma~\ref{l.dprops}(ii)
that $\tilde F(rx_0,\ww) = \P_{x_0,\ww}(\zeta= \infty)$. By
Theorem~\ref{t.SDE}, this probability is $F_\beta(rx_0,\ww)$. One
argues similarly for the higher eigenvalues.
\end{pf*}

\section{Connection with Painlev\'e II}\label{s.painleve}

In Part~I, we used the PDE characterization to give new
proofs of certain Painlev\'e II formulas for the single-parameter (rank
one deformed) distribution functions $F_\beta(x;w)$ in the cases
$\beta= 2,4$, in particular recovering the Painlev\'e II
representations for the corresponding undeformed Tracy--Widom
distributions by taking $w\to\infty$. The Painlev\'e formulas
appeared originally in \citeauthor{BR1} (\citeyear{BR1,BR2}) in a different context;
in the random matrix theory setting, \citet{B} derived them from
the {BBP} result in the case $\beta= 2$ but they are new
for $\beta= 4$ when $w\neq0$ [see \citet{W}].

\citet{B} also derives a Painlev\'e II formula for the
multi-parameter distribution function $F_2(x;w_1,\ldots,w_r)$. While
we do not have a full independent proof at present, we used the
computer algebra system Maple to verify symbolically that it does
indeed satisfy our PDE (\ref{PDE}) at $\beta=2$ for $r = 2,3,4,5$.
Since this article was first posted, a pencil-and-paper proof for all
$r$ was found [\citet{BBaik}]. We first state Baik's formula and then
briefly describe the symbolic computation.

Let $u(x)$ be the Hastings--McLeod solution of the homogeneous Painlev\'
e II equation
%
%
\begin{equation}
\label{PII} u'' = 2u^3 + xu,
\end{equation}
characterized by
\[
u(x)\sim\Ai(x)\qquad\mbox{as }x\to+\infty,
\]
where $\Ai(x)$ is the Airy function. Put
\vspace{-6pt}
%
%
\begin{eqnarray}
\label{v} v(x) &=& \int_x^\infty u^2,
\\
\label{EF} E(x) &=& \exp\biggl(-\int_x^\infty u
\biggr),\qquad F(x) = \exp\biggl(-\int_x^\infty v
\biggr).
\end{eqnarray}
Next, define two functions $f(x,w)$, $g(x,w)$ on $\R^2$, analytic in
$w$ for each fixed $x$, by the first- order linear ODEs
%
%
\begin{equation}
\label{w_lax} \frac{\del}{\del w} %
\pmatrix{f
\cr
g} %
=
\pmatrix{u^2&-wu-u'
\cr
-wu+u'&w^2-x-u^2}
\pmatrix{f
\cr
g} %
\end{equation}
and the initial conditions
\[
f(x,0) = E(x) = g(x,0).
\]
Equation (\ref{w_lax}) is one member of the Lax pair for the Painlev\'
e II equation. The other member of the pair is
%
%
\begin{equation}
\label{x_lax}\frac{\del}{\del x} %
\pmatrix{f
\cr
g} %
=
\pmatrix{0&u(x)
\cr
u(x)&-w} %
\pmatrix{f
\cr
g},
\end{equation}
which holds for each fixed $w\in\R$. The consistency condition for
the over-determined system (\ref{w_lax}), (\ref{x_lax}) (i.e., that
the partials commute) is the Painlev\'e II equation (\ref{PII}).
The functions $f,g$ can also be defined in terms of an associated
Riemann--Hilbert problem [see, e.g., \citet{B}].

Baik's formula is
%
%
\begin{equation}
\label{Baik} F_2(x;w_1,\ldots,w_r) = F(x)
\frac{\det( (w_i+ \del/(\del x) )^{j-1}f(x,w_i) )_{1\le i,j \le r}}{\prod_{1\le
i<j\le r}(w_j-w_i)}.
\end{equation}

Our symbolic verification for small values of $r$ consisted of the
following steps. The differential relations given by (\ref
{PII})--(\ref{x_lax}) were encoded as formal substitution rules. The
determinant in (\ref{Baik}) was expanded (this step becomes
problematic for larger~$r$\dots!) and the result plugged into our
PDE (\ref{PDE}). The substitution rules were then applied repeatedly.
Finally, the result was factored using Maple's built-in command. Each
time, the output contained the factor
\[
v+u^4-\bigl(u'\bigr)^2+xu^2,
\]
which vanishes identically: differentiate and apply (\ref{PII}) to see
it is constant, and take $x\to\infty$ to see the constant is zero.


\section*{Acknowledgements}
Alex Bloemendal would like to thank Percy Deift for valuable comments and Jinho Baik, Alexei
Borodin, Peter Forrester, Brian Rider, Craig Tracy, Benedek Valko and
Dong Wang for interesting and helpful discussions.


%

\printaddresses

\begin{thebibliography}{25}
\bibitem[\protect\citeauthoryear{Anderson, Guionnet and Zeitouni}{2010}]{AGZ}
%
\begin{bbook}[mr]
\bauthor{\bsnm{Anderson},~\bfnm{Greg~W.}\binits{G.~W.}},
\bauthor{\bsnm{Guionnet},~\bfnm{Alice}\binits{A.}} \AND
\bauthor{\bsnm{Zeitouni},~\bfnm{Ofer}\binits{O.}}
(\byear{2010}).
\btitle{An Introduction to Random Matrices}.
\bpublisher{Cambridge Univ. Press},
\blocation{Cambridge}.
\bid{mr={2760897}}
\bptnote{check year}%
\end{bbook}
%
\bptok{imsref}%
\endbibitem

\bibitem[\protect\citeauthoryear{Baik}{2006}]{B}
%
\begin{barticle}[mr]
\bauthor{\bsnm{Baik},~\bfnm{Jinho}\binits{J.}}
(\byear{2006}).
\btitle{Painlev\'e formulas of the limiting distributions for nonnull
complex sample covariance matrices}.
\bjournal{Duke Math. J.}
\bvolume{133}
\bpages{205--235}.
\bid{doi={10.1215/S0012-7094-06-13321-5}, issn={0012-7094}, mr={2225691}}
\end{barticle}
%
\bptok{imsref}%
\endbibitem

\bibitem[\protect\citeauthoryear{Baik, Ben~Arous and P{\'e}ch{\'
e}}{2005}]{BBP}
%
\begin{barticle}[mr]
\bauthor{\bsnm{Baik},~\bfnm{Jinho}\binits{J.}},
\bauthor{\bsnm{Ben Arous},~\bfnm{G{\'e}rard}\binits{G.}} \AND
\bauthor{\bsnm{P{\'e}ch{\'e}},~\bfnm{Sandrine}\binits{S.}}
(\byear{2005}).
\btitle{Phase transition of the largest eigenvalue for nonnull complex
sample covariance matrices}.
\bjournal{Ann. Probab.}
\bvolume{33}
\bpages{1643--1697}.
\bid{doi={10.1214/009117905000000233}, issn={0091-1798}, mr={2165575}}
\end{barticle}
%
\bptok{imsref}%
\endbibitem

\bibitem[\protect\citeauthoryear{Baik and Rains}{2000}]{BR1}
%
\begin{barticle}[mr]
\bauthor{\bsnm{Baik},~\bfnm{Jinho}\binits{J.}} \AND
\bauthor{\bsnm{Rains},~\bfnm{Eric~M.}\binits{E.~M.}}
(\byear{2000}).
\btitle{Limiting distributions for a polynuclear growth model with
external sources}.
\bjournal{J. Stat. Phys.}
\bvolume{100}
\bpages{523--541}.
\bid{doi={10.1023/A:1018615306992}, issn={0022-4715}, mr={1788477}}
\end{barticle}
%
\bptok{imsref}%
\endbibitem

\bibitem[\protect\citeauthoryear{Baik and Rains}{2001}]{BR2}
%
\begin{barticle}[mr]
\bauthor{\bsnm{Baik},~\bfnm{Jinho}\binits{J.}} \AND
\bauthor{\bsnm{Rains},~\bfnm{Eric~M.}\binits{E.~M.}}
(\byear{2001}).
\btitle{The asymptotics of monotone subsequences of involutions}.
\bjournal{Duke Math. J.}
\bvolume{109}
\bpages{205--281}.
\bid{doi={10.1215/S0012-7094-01-10921-6}, issn={0012-7094}, mr={1845180}}
\end{barticle}
%
\bptok{imsref}%
\endbibitem

\bibitem[\protect\citeauthoryear{Baik and Wang}{2013}]{BW}
%
\begin{barticle}[mr]
\bauthor{\bsnm{Baik},~\bfnm{Jinho}\binits{J.}} \AND
\bauthor{\bsnm{Wang},~\bfnm{Dong}\binits{D.}}
(\byear{2013}).
\btitle{On the largest eigenvalue of a {H}ermitian random matrix model
with spiked external source II: {H}igher rank cases}.
\bjournal{Int. Math. Res. Not. IMRN}
\bvolume{14}
\bpages{3304--3370}.
\bid{issn={1073-7928}, mr={3085761}}
\end{barticle}
%
\bptok{imsref}%
\endbibitem

\bibitem[\protect\citeauthoryear{Baur and Kratz}{1989}]{BK}
%
\begin{barticle}[mr]
\bauthor{\bsnm{Baur},~\bfnm{G.}\binits{G.}} \AND
\bauthor{\bsnm{Kratz},~\bfnm{W.}\binits{W.}}
(\byear{1989}).
\btitle{A general oscillation theorem for selfadjoint differential
systems with applications to {S}turm--{L}iouville eigenvalue problems
and quadratic functionals}.
\bjournal{Rend. Circ. Mat. Palermo (2)}
\bvolume{38}
\bpages{329--370}.
\bid{doi={10.1007/BF02850019}, issn={0009-725X}, mr={1053376}}
\bptnote{check pages}%
\end{barticle}
%
\bptok{imsref}%
\endbibitem

\bibitem[\protect\citeauthoryear{Bloemendal and Baik}{2013}]{BBaik}
%
\begin{bmisc}[auto:parserefs-M02]
\bauthor{\bsnm{Bloemendal},~\bfnm{A.}\binits{A.}} \AND
\bauthor{\bsnm{Baik},~\bfnm{J.}\binits{J.}}
(\byear{2013}).
\bhowpublished{Unpublished manuscript}.
\end{bmisc}
%
\bptok{imsref}%
\endbibitem

\bibitem[\protect\citeauthoryear{Bloemendal and Vir{\'a}g}{2013}]{BV1}
%
\begin{barticle}[mr]
\bauthor{\bsnm{Bloemendal},~\bfnm{Alex}\binits{A.}} \AND
\bauthor{\bsnm{Vir{\'a}g},~\bfnm{B{\'a}lint}\binits{B.}}
(\byear{2013}).
\btitle{Limits of spiked random matrices {I}}.
\bjournal{Probab. Theory Related Fields}
\bvolume{156}
\bpages{795--825}.
\bid{doi={10.1007/s00440-012-0443-2}, issn={0178-8051}, mr={3078286}}
\end{barticle}
%
\bptok{imsref}%
\endbibitem

\bibitem[\protect\citeauthoryear{Dumitriu and Edelman}{2002}]{DE}
%
\begin{barticle}[mr]
\bauthor{\bsnm{Dumitriu},~\bfnm{Ioana}\binits{I.}} \AND
\bauthor{\bsnm{Edelman},~\bfnm{Alan}\binits{A.}}
(\byear{2002}).
\btitle{Matrix models for beta ensembles}.
\bjournal{J. Math. Phys.}
\bvolume{43}
\bpages{5830--5847}.
\bid{doi={10.1063/1.1507823}, issn={0022-2488}, mr={1936554}}
\end{barticle}
%
\bptok{imsref}%
\endbibitem

\bibitem[\protect\citeauthoryear{Dyson}{1962}]{Dyson}
%
\begin{barticle}[mr]
\bauthor{\bsnm{Dyson},~\bfnm{Freeman~J.}\binits{F.~J.}}
(\byear{1962}).
\btitle{A {B}rownian-motion model for the eigenvalues of a random matrix}.
\bjournal{J. Math. Phys.}
\bvolume{3}
\bpages{1191--1198}.
\bid{issn={0022-2488}, mr={0148397}}
\end{barticle}
%
\bptok{imsref}%
\endbibitem

\bibitem[\protect\citeauthoryear{Ethier and Kurtz}{1986}]{EthierKurtz}
%
\begin{bbook}[mr]
\bauthor{\bsnm{Ethier},~\bfnm{Stewart~N.}\binits{S.~N.}} \AND
\bauthor{\bsnm{Kurtz},~\bfnm{Thomas~G.}\binits{T.~G.}}
(\byear{1986}).
\btitle{Markov Processes: Characterization and Convergence}.
\bpublisher{Wiley},
\blocation{New York}.
\bid{doi={10.1002/9780470316658}, mr={0838085}}
\end{bbook}
%
\bptok{imsref}%
\endbibitem

\bibitem[\protect\citeauthoryear{Forrester}{2013}]{F3}
%
\begin{barticle}[mr]
\bauthor{\bsnm{Forrester},~\bfnm{Peter~J.}\binits{P.~J.}}
(\byear{2013}).
\btitle{Probability densities and distributions for spiked and general
variance {W}ishart {$\beta$}-ensembles}.
\bjournal{Random Matrices Theory Appl.}
\bvolume{2}
\bpages{1350011, 19}.
\bid{issn={2010-3263}, mr={3149441}}
\bptnote{check pages}%
\end{barticle}
%
\bptok{imsref}%
\endbibitem

\bibitem[\protect\citeauthoryear{Halmos}{1951}]{Halmos}
%
\begin{bbook}[mr]
\bauthor{\bsnm{Halmos},~\bfnm{Paul~R.}\binits{P.~R.}}
(\byear{1951}).
\btitle{Introduction to {H}ilbert {S}pace and the Theory of {S}pectral
{M}ultiplicity}.
\bpublisher{Chelsea},
\blocation{New York}.
\bid{mr={0045309}}
\bptnote{check year}%
\end{bbook}
%
\bptok{imsref}%
\endbibitem

\bibitem[\protect\citeauthoryear{Johnstone}{2001}]{J1}
%
\begin{barticle}[mr]
\bauthor{\bsnm{Johnstone},~\bfnm{Iain~M.}\binits{I.~M.}}
(\byear{2001}).
\btitle{On the distribution of the largest eigenvalue in principal
components analysis}.
\bjournal{Ann. Statist.}
\bvolume{29}
\bpages{295--327}.
\bid{doi={10.1214/aos/1009210544}, issn={0090-5364}, mr={1863961}}
\end{barticle}
%
\bptok{imsref}%
\endbibitem

\bibitem[\protect\citeauthoryear{Mo}{2012}]{Mo}
%
\begin{barticle}[mr]
\bauthor{\bsnm{Mo},~\bfnm{M.~Y.}\binits{M.~Y.}}
(\byear{2012}).
\btitle{Rank 1 real {W}ishart spiked model}.
\bjournal{Comm. Pure Appl. Math.}
\bvolume{65}
\bpages{1528--1638}.
\bid{doi={10.1002/cpa.21415}, issn={0010-3640}, mr={2969495}}
\end{barticle}
%
\bptok{imsref}%
\endbibitem

\bibitem[\protect\citeauthoryear{Morse}{1932}]{Morse}
%
\begin{bbook}[auto]
\bauthor{\bsnm{Morse},~\bfnm{Marston}\binits{M.}}
(\byear{1932}).
\btitle{The Calculus of Variations in the Large}.
\bseries{American Mathematical Society Colloquium Publications}
\bvolume{18}.
\bpublisher{Amer. Math. Soc.},
\blocation{Providence, RI}.
\bnote{(1996 reprint of the original).}
\end{bbook}
\bptok{imsref}%
\endbibitem

\bibitem[\protect\citeauthoryear{Morse}{1973}]{Morse2}
%
\begin{bbook}[mr]
\bauthor{\bsnm{Morse},~\bfnm{Marston}\binits{M.}}
(\byear{1973}).
\btitle{Variational Analysis: Critical Extremals and {S}turmian Extensions}.
\bpublisher{Interscience Publishers [Wiley]},
\blocation{New York}.
\bid{mr={0420368}}
\end{bbook}
%
\bptok{imsref}%
\endbibitem



\bibitem[\protect\citeauthoryear{Ram{\'{\i}}rez, Rider and Vir{\'
a}g}{2011}]{RRV}
%
\begin{barticle}[mr]
\bauthor{\bsnm{Ram{\'{\i}}rez},~\bfnm{Jos{\'e}~A.}\binits{J.~A.}},
\bauthor{\bsnm{Rider},~\bfnm{Brian}\binits{B.}} \AND
\bauthor{\bsnm{Vir{\'a}g},~\bfnm{B{\'a}lint}\binits{B.}}
(\byear{2011}).
\btitle{Beta ensembles, stochastic {A}iry spectrum, and a diffusion}.
\bjournal{J. Amer. Math. Soc.}
\bvolume{24}
\bpages{919--944}.
\bid{doi={10.1090/S0894-0347-2011-00703-0}, issn={0894-0347}, mr={2813333}}
\end{barticle}
%
\bptok{imsref}%
\endbibitem

\bibitem[\protect\citeauthoryear{Reed and Simon}{1980}]{RS1}
%
\begin{bbook}[mr]
\bauthor{\bsnm{Reed},~\bfnm{Michael}\binits{M.}} \AND
\bauthor{\bsnm{Simon},~\bfnm{Barry}\binits{B.}}
(\byear{1980}).
\btitle{Methods of Modern Mathematical Physics. I: Functional Analysis},
\bedition{2nd} ed.
\bpublisher{Academic Press, Inc.},
\blocation{New York}.
\bid{mr={0751959}}
\end{bbook}
%
\bptok{imsref}%
\endbibitem

\bibitem[\protect\citeauthoryear{Reid}{1971}]{Reid}
%
\begin{bbook}[mr]
\bauthor{\bsnm{Reid},~\bfnm{William~T.}\binits{W.~T.}}
(\byear{1971}).
\btitle{Ordinary Differential Equations}.
\bpublisher{Wiley},
\blocation{New York}.
\bid{mr={0273082}}
\end{bbook}
%
\bptok{imsref}%
\endbibitem

\bibitem[\protect\citeauthoryear{Reid}{1972}]{Reid2}
%
\begin{bbook}[mr]
\bauthor{\bsnm{Reid},~\bfnm{William~T.}\binits{W.~T.}}
(\byear{1972}).
\btitle{Riccati Differential Equations}.
\bpublisher{Academic Press},
\blocation{New York}.
\bid{mr={0357936}}
\end{bbook}
%
\bptok{imsref}%
\endbibitem

\bibitem[\protect\citeauthoryear{Trotter}{1984}]{Trotter}
%
\begin{barticle}[mr]
\bauthor{\bsnm{Trotter},~\bfnm{Hale~F.}\binits{H.~F.}}
(\byear{1984}).
\btitle{Eigenvalue distributions of large {H}ermitian matrices;
{W}igner's semicircle law and a theorem of {K}ac, {M}urdock, and
{S}zeg\H{o}}.
\bjournal{Adv. Math.}
\bvolume{54}
\bpages{67--82}.
\bid{doi={10.1016/0001-8708(84)90037-9}, issn={0001-8708}, mr={0761763}}
\end{barticle}
%
\bptok{imsref}%
\endbibitem

\bibitem[\protect\citeauthoryear{Wang}{2008}]{W}
%
\begin{bmisc}[mr]
\bauthor{\bsnm{Wang},~\bfnm{Dong}\binits{D.}}
(\byear{2008}).
\btitle{Spiked Models in {W}ishart Ensemble,
Ph.D. thesis, Brandeis Univ. Available at \arxivurl{arXiv:0804.0889v1}.}
\bid{mr={2711517}}
\end{bmisc}
%
\bptok{imsref}%
\endbibitem

\bibitem[\protect\citeauthoryear{Weidmann}{1997}]{Weidmann}
%
\begin{barticle}[mr]
\bauthor{\bsnm{Weidmann},~\bfnm{Joachim}\binits{J.}}
(\byear{1997}).
\btitle{Strong operator convergence and spectral theory of ordinary
differential operators}.
\bjournal{Univ. Iagel. Acta Math.}
\bvolume{34}
\bpages{153--163}.
\bid{issn={0083-4386}, mr={1458041}}
\end{barticle}
%
\bptok{imsref}%
\endbibitem
\end{thebibliography}
\end{document}